\newtheorem{theorem}{Theorem}[section]
\newtheorem{lemma}[theorem]{Lemma}
\newtheorem{proposition}[theorem]{Proposition}
\newtheorem{corollary}[theorem]{Corollary}
\newtheorem{definition}[theorem]{Definition}
\numberwithin{equation}{section}
\def\Xint#1{\mathchoice
{\XXint\displaystyle\textstyle{#1}}%
{\XXint\textstyle\scriptstyle{#1}}%
{\XXint\scriptstyle\scriptscriptstyle{#1}}%
{\XXint\scriptscriptstyle\scriptscriptstyle{#1}}%
\!\int}
\def\XXint#1#2#3{{\setbox0=\hbox{$#1{#2#3}{\int}$ }
\vcenter{\hbox{$#2#3$ }}\kern-.6\wd0}}
\def\dint{\Xint-}
\DeclareMathOperator *{\essosc}{ess\ osc}
\DeclareMathOperator *{\osc}{osc}
\DeclareMathOperator *{\esssup}{ess\ sup}
\DeclareMathOperator *{\essinf}{ess\ inf}
\DeclareMathOperator *{\di}{div} %\div means division sign.
\DeclareMathOperator *{\meas}{meas}
\DeclareMathOperator *{\dist}{dist}
\DeclareMathOperator *{\data}{data}
\DeclareMathOperator *{\diam}{diam}
\DeclareMathOperator *{\loc}{loc}
\DeclareMathOperator *{\Lip}{Lip}
\DeclareMathOperator *{\Tr}{Tr}
\DeclareMathOperator *{\BMO}{BMO}
\DeclareMathOperator *{\Proj}{Proj}
\DeclareMathOperator *{\BBR}{\mathbb{R}}
\DeclareMathOperator *{\BBC}{\mathbb{C}}
\begin{document}
\title[Dirichlet elliptic systems with a Carleson condition]{The $L^p$ Dirichlet boundary problem for second order Elliptic Systems with rough coefficients}

\author{Martin Dindo\v{s}}
\address{School of Mathematics, \\
         The University of Edinburgh and Maxwell Institute of Mathematical Sciences, UK}
\email{M.Dindos@ed.ac.uk}

\author{Sukjung Hwang}
\address{Department of Mathematics\\ Yonsei University, Republic of Korea}
\email{sukjung\_hwang@yonsei.ac.kr}

\author{Marius Mitrea}
\address{Department of Mathematics \\
         Baylor University, USA}
\email{Marius\_Mitrea@baylor.edu}
\keywords{Strongly elliptic system, boundary value problems, Carleson condition}

%%%%%
\begin{abstract}
Given a domain above a Lipschitz graph, we establish solvability results for strongly elliptic second-order 
systems in divergence-form, allowed to have lower-order (drift) terms, with $L^p$-boundary data for $p$ near $2$ 
(more precisely, in an interval of the form $\big(2-\varepsilon,\frac{2(n-1)}{n-2}+\varepsilon\big)$ for some small $\varepsilon>0$). 
The main novel aspect of our result is that the coefficients of the operator do not have to be constant, or have 
very high regularity, instead they will satisfy a natural Carleson condition that has appeared first in the scalar case. 
A significant example of a system to which our result may be applied is the Lam\'e system for isotropic inhomogeneous materials. We show that our result applies to isotropic materials with Poisson ratio $\nu<0.396$.

Dealing with genuine systems gives rise to substantial new challenges, absent in the scalar case. Among other things,  
there is no maximum principle for general elliptic systems, and the De Giorgi - Nash - Moser theory may also not apply. 
We are, nonetheless, successful in establishing estimates for the square-function and the 
nontangential maximal operator for the solutions of the elliptic system described earlier, 
and use these as alternative tools for proving $L^p$ solvability results for $p$ near $2$.
\end{abstract}

\maketitle

%%%%%
\section{Introduction}\label{S:Intro}

This paper is motivated by the known results concerning boundary value problems 
for second-order elliptic equations in divergence-form, when the coefficients satisfying 
certain natural, minimal smoothness conditions (see \cite{DPP}, \cite{DPR}, \cite{KP}). 

Let $\Omega\subset{\BBR}^{n}$ be the domain lying above the graph of a real-valued Lipschitz function $\phi$ defined 
in ${\mathbb{R}}^{n-1}$, i.e., 
\begin{equation}\label{Eqqq-1}
\Omega=\{(x_0,x'):\,x_0>\phi(x')\}.
\end{equation}
Consider a second-order elliptic system in divergence-form, acting on vector-valued functions 
$u:\Omega\to{\BBR}^N$ according to 
\begin{equation}\label{ES}
\mathcal{L}u=\left[ \partial_{i} \left(A_{ij}^{\alpha \beta}(x) \partial_{j} u_{\beta}\right)
+B_{i}^{\alpha \beta}(x) \partial_{i}u_{\beta}\right]_{\alpha}
\end{equation}
with the summation convention over repeated indices in effect for $i,j\in\{0,\ldots,n-1\}$ and $\alpha,\beta\in\{1,\ldots,N\}$. 
When $N=1$ the operator ${\mathcal{L}}$ is scalar; see \cite{DPP} for a detailed treatment of this case. 

There are many differences between second-order elliptic equations and elliptic systems. 
In general, there is no maximal principle for elliptic systems, and the De Giorgi - Nash - Moser theory 
that provides interior H\"older regularity for scalar elliptic operators may no longer hold.

This causes number of new challenges to be dealt with. For example, it forces us to work with 
a weaker version of the nontangential maximal function, defined using the $L^2$ averages. Also, 
the lack of a maximum principle renders the natural $L^\infty$ end-point for $L^p$-solvability results 
unavailable, thus preventing us from interpolating between solvability at $L^2$ and $L^\infty$ level. 
This means that $L^p$ solvability results for $p\ne 2$ have to be obtained using different methods.

Given a coefficient tensor $A=[A^{\alpha \beta}_{ij}]$ with measurable entries defined in $\Omega$, 
we shall say that $A$ is strongly elliptic if there exist constants $0<\lambda\leq\Lambda<\infty$ such that
\begin{equation}\label{EllipA}
\lambda|\eta|^{2}\leq\sum_{\alpha,\beta=1}^{N}\sum_{i,j=0}^{n-1} 
A_{ij}^{\alpha\beta}(x)\eta_{i}^{\alpha}\eta_{j}^{\beta}\leq\Lambda|\eta|^{2}
\end{equation}
for all $\eta=(\eta_{i}^{\alpha})\in{\BBR}^{nN}$ and a.e. $x\in\Omega$. Note that this forces 
$A$ to be bounded and that we may take $\Lambda=\|A\|_{L^\infty(\Omega)}$. Traditionally, \eqref{EllipA} is 
usually referred to as the {\bf Legendre} condition. It is the strongest form of ellipticity, and   
it is usually relatively easy to verify, since it has a pointwise formulation.  

For some of our results (such as the $L^2$ solvability) it would suffice to assume somewhat weaker 
integral condition which we formulate on ${\mathbb R}^n_+$. Let ${\mathcal H}_0$ be the subspace of
$L^2({\mathbb R}^{n-1};{\mathbb R}^{Nn})$ consisting of $n\times N$ matrices $(f_j^\alpha)_{j,\alpha}$ with 
the property that $(f_j^\alpha)_{j=1,\dots,n-1}$ is curl-free in ${\mathbb R}^{n-1}$ for each $\alpha$. 
Assume that for some $\lambda>0$ and a.e. $x_0>0$
\begin{equation}\label{EllipIC}
\lambda\sum_{i=0}^{n-1}\sum_{\alpha=1}^N\int_{\mathbb R^{n-1}}\left|f_i^\alpha(x')\right|^2dx'
\le \int_{\mathbb R^{n-1}}A_{ij}^{\alpha\beta}(x_0,x')f_i^\alpha(x') f_j^\beta(x')\,dx',\quad\forall f\in\mathcal H_0.
\end{equation} 
(C.f \cite{AM}).
Finally, in the second half of our paper it suffices to assume an even weaker brand of ellipticity, 
namely the {\bf Legendre-Hadamard} condition to the effect that 
\begin{equation}\label{EllipLH}
\lambda |p|^{2}|q|^2\leq\sum_{\alpha,\beta=1}^{N}\sum_{i,j=0}^{n-1} 
A_{ij}^{\alpha\beta}(x)p^\alpha p^\beta q_i q_j
\end{equation}
for all $p=(p^{\alpha})_\alpha\in{\BBR}^{N}$, $q=(q_i)_i\in{\BBR}^{n}$, and a.e. points $x\in\Omega$.

The main result of this paper establishes the solvability of the $L^{2}-$Dirichlet boundary value problem 
for \eqref{ES} assuming the coefficients $A$ and $B$ satisfy a natural Carleson condition which has been 
considered in the scalar case in \cite{DPP}, \cite{DPR}, and elsewhere. We will also impose certain structural 
assumptions on the tensor $A$ that permits recasting \eqref{ES} into a more convenient form.

\vglue1mm
\noindent{\bf Example.} Consider the Lam\'e operator ${\mathcal L}$ for isotropic inhomogeneous materials
in a domain $\Omega$ with Lam\'e coefficients $\lambda(x)$ and $\mu(x)$. Then for $u:\Omega\to{\mathbb R}^n$ 
in vector notation (c.f. \cite{UW}) $\mathcal L$ has the form
\begin{equation}\label{Lame}
\mathcal Lu=\nabla\cdot\left(\lambda(x)(\nabla\cdot u)I+\mu(x)(\nabla u+(\nabla u)^T) \right).
\end{equation}
This fits the template in \eqref{ES} with the lower-order coefficients $B^{\alpha\beta}_i=0$ 
and the coefficients of the second-order term given by (using the Kronecker symbol notation) 
$$
A_{ij}^{\alpha\beta}(x)=\mu(x)\delta_{ij}\delta_{\alpha\beta}+\lambda(x)\delta_{i\alpha}\delta_{j\beta}
+\mu(x)\delta_{i\beta}\delta_{j\alpha}.
$$
Observe that since
\begin{equation}\label{yr4DDF-111x}
\partial_i(A^{\alpha\beta}_{ij}\partial_ju_\beta)=\partial_j(A^{\alpha\beta}_{ij}\partial_iu_\beta)
-\partial_j(A^{\alpha\beta}_{ij})\partial_iu_\beta+\partial_i(A^{\alpha\beta}_{ij})\partial_ju_\beta
\end{equation}
we may rewrite the operator ${\mathcal L}$ as
\begin{equation}\label{ES-lame}
\mathcal{L}u=\left[ \partial_{i} \left(\overline{A}_{ij}^{\alpha \beta}(x) \partial_{j} u_{\beta}\right)
+\overline{B}_{i}^{\alpha \beta}(x) \partial_{i}u_{\beta}\right]_{\alpha},
\end{equation}
where
\begin{equation}\label{eqLM}
\begin{array}{c}
\overline{A}_{ij}^{\alpha\beta}(x)=\mu(x)\delta_{ij}\delta_{\alpha\beta}
+(\lambda(x)+r(x))\delta_{i\alpha}\delta_{j\beta}+(\mu(x)-r(x))\delta_{i\beta}\delta_{j\alpha}
\\[6pt]
\text{and }\,\,
\overline{B}^{\alpha\beta}_i(x)=\partial_jr(x)(\delta_{i\alpha}\delta_{j\beta}-\delta_{i\beta}\delta_{j\alpha}),
\end{array}
\end{equation}
for any $r(x)\in L^\infty$. The introduction of the auxiliary function $r$ infuses an extra degree of flexibility.  

\vglue1mm

The literature on the solvability of boundary value problems for elliptic systems in domains of $\mathbb R^n$ is limited except 
when the tensor $A$ has constant coefficients, or at least smooth enough so that methods like boundary layer potentials may 
be employed. For the solvability the $L^p$-Dirichlet problem for constant coefficients second-order elliptic systems in the 
range $2-\varepsilon<p<2+\varepsilon$ see \cites{DKV, F, FKV, G, BM} and \cite{K}. It was subsequently shown in \cites{S1,S2} 
that in the constant coefficient case this range may be extended to the interval $2-\varepsilon<p<\frac{2(n-1)}{n-3}+\varepsilon$ 
by exploring the solvability of the Regularity problem. See also \cite{MMMM} and in particular \cite{S3} for more 
recent developments. We take advantage of \cite{S3} to extrapolate from solvability for $p=2$ to the range 
$2\le p < \frac{2(n-1)}{n-2}+\varepsilon$ without needing to establish the solvability of the Regularity problem.

Of notable interest is also paper  \cite{DM} where the Stationary Navier-Stokes system in nonsmooth manifolds
was studied. The authors have established results for $L^p$ solvability of the linearized Stokes operator with 
variable coefficients via the method of layer potentials. Because of the method used, at least H\"older continuity 
of the underlying metric tensor had to be assumed.

Another special case is when $A$ is of block-form. For operators $\mathcal{L}=\mbox{div}(A(x)\nabla\cdot)$ 
associated with block matrices $A$, there are numerous results on the $L^p$-solvability of the Dirichlet, Regularity, 
and Neumann problems. This body of results owes to the solution of the Kato problem, where the coefficients of the 
block matrix are also assumed to be independent of the transverse variable. This assumption is usually referred 
in literature as ``$t$-independent" (in our notation it is the $x_0$ variable). See \cite{AHLMT}, \cite{HM}, 
as well as a series of papers by Auscher, Rosen(Axelsson), and McIntosh for second-order elliptic systems (\cites{AA1, AR2, AAM}). 

There are also solvability results in various special cases, assuming that the solutions satisfy 
De Giorgi - Nash - Moser estimates; see \cite{AAAHK} and \cite{HKMPreg} for example (the latter paper is 
also concerned with operators that are $t$-independent). Finally, there are perturbation results in a multitude 
of special cases, such as \cite{AAM} and \cite{AAH}; the first paper shows that solvability in $L^2$ implies solvability 
in $L^p$ for $p$ near $2$, and the second paper has $L^2$-solvability results for small $L^\infty$ perturbations 
of real elliptic operators when the complex matrix is $t$-independent.

Significantly, in the formulation of our solvability result for elliptic systems we shall not assume ``$t$-independence".
Instead, we assume the coefficients $A$ and $B$ satisfy a natural Carleson condition that has appeared 
in the literature so far only for real scalar elliptic operators (\cite {KP01}, \cite{DPP}, and \cite{DPR}). 
The Carleson condition on $A$, formulated in \eqref{Car_hatAA} below, holds uniformly on Lipschitz sub-domains, 
and is therefore a natural condition in the context of chord-arc domains as well. However, in this work we do not 
go beyond the class of Lipschitz domain. Our main result reads as follows.

\begin{theorem}\label{S3:T1} 
Let $\Omega$ be the Lipschitz domain $\{(x_0,x')\in{\mathbb{R}}\times{\mathbb{R}}^{n-1}:\,x_0>\phi(x')\}$.
Denote its Lipschitz constant by $L=\|\nabla\phi\|_{L^\infty}$, fix some $a\in(0,1/L)$, and write $\delta(x)$ 
for the distance from points $x\in{\mathbb{R}}^n$ to $\partial\Omega$. Assume that the coefficient tensor $A$ 
of the operator \eqref{ES} is strongly elliptic with constants $\lambda,\Lambda$ {\rm (}cf. \eqref{EllipA}{\rm )}. 
In addition, assume that:
\begin{itemize}
\item[$(i)$] One has $A_{0j}^{\alpha\beta}=\delta_{\alpha\beta}\delta_{0j}$.
\item[$(ii)$] The following is a Carleson measure in $\Omega$:
\begin{equation}\label{Car_hatAA}
d{\mu}(x)=\left[\left(\sup_{B_{\delta(x)/2}(x)}|\nabla{A}|\right)^{2}
+\left(\sup_{B_{\delta(x)/2}(x)}|{B}|\right)^{2} \right]\delta(x)\,dx.
\end{equation} 
\end{itemize}

Then there exists a small number $K=K(\lambda,\Lambda,n,N)>0$ such that if
\begin{equation}\label{Small-Cond}
\max\big\{L\,,\,\|\mu\|_{\mathcal C}\big\}\leq K
\end{equation}
it follows that $L^p$-Dirichlet problem for the system \eqref{E:D} is solvable for 
whenever $2-\varepsilon< p<\frac{2(n-1)}{n-2}+\varepsilon$ and the estimate
\begin{equation}\label{Main-Est}
\|\tilde{N}_a u\|_{L^{p}(\partial \Omega)}\leq C\|f\|_{L^{p}(\partial \Omega;{\BBR}^N)}
\end{equation}
holds for all energy solutions $\mathcal Lu=0$ with datum $f$. 
Here $\varepsilon=\varepsilon(\lambda,\Lambda,n,N,K,a)>0$ is a small number and $C=C(\lambda,\Lambda,n,N,\Omega,K)>0$ 
is a finite constant independent of $f$. 
\end{theorem}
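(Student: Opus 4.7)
My plan is to first pull back $\Omega$ to the half-space $\mathbb{R}^n_+$ via the bi-Lipschitz map $\Phi(x_0,x')=(x_0+\phi(x'),x')$. This transforms the system $\mathcal{L}$ into one of the same structural form with new coefficients $\widetilde{A},\widetilde{B}$ that remain strongly elliptic (with constants depending on $L$ and $\lambda,\Lambda$) and whose associated measure still satisfies \eqref{Car_hatAA}; crucially, the smallness of $L$ in \eqref{Small-Cond} will let us absorb $O(L)$ contributions into the main estimate at several points. Energy solutions with datum $f$ exist and are unique via Lax-Milgram thanks to \eqref{EllipA}. Because DeGiorgi-Nash-Moser theory is unavailable for general systems, all pointwise bounds must be replaced by $L^2$-averages, which is precisely why the statement uses $\tilde{N}_a u$. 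The two indispensable survivors from the scalar theory are the interior Caccioppoli inequality (direct from \eqref{EllipA}) and the Gehring-type reverse H\"older inequality for $|\nabla u|$ on Whitney balls; I would record these at the outset as the only regularity inputs available throughout.

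The core of the proof is the $L^2$ bound $\|\tilde{N}_a u\|_{L^2(\partial\Omega)}\le C\|f\|_{L^2}$, which I would obtain from a Rellich-type identity coupled to a square-function/nontangential-maximal-function equivalence. For the Rellich identity, I would test $\mathcal{L}u=0$ against the multiplier $\partial_0 u_\alpha$ (times a smooth cutoff adapted to a Carleson region) and sum in $\alpha$. The structural hypothesis $A^{\alpha\beta}_{0j}=\delta_{\alpha\beta}\delta_{0j}$ in $(i)$ is the key simplification: integration by parts then cleanly extracts the boundary integral $\int_{\partial\Omega}(|\nabla_{\text{tan}}u|^2-|\partial_0 u|^2)$, exactly as in the scalar Kenig-Pipher calculation, while the remaining tangential boundary contributions form a non-negative quadratic form in $\nabla_{\text{tan}}u$ provided one has arranged the symmetrization $\overline{A}^{\alpha\beta}_{ij}=\overline{A}^{\beta\alpha}_{ij}$ illustrated by the Lam\'e discussion and \eqref{yr4DDF-111x}. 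The bulk error terms produced when derivatives fall on $A$ or on the drift $B$ have the schematic form $\int_\Omega(|\nabla A|+|B|)|\nabla u|^2\,\delta(x)\,dx$; by \eqref{Car_hatAA} and Carleson's inequality they are controlled by $C\|\mu\|_{\mathcal{C}}^{1/2}\|S(u)\|_{L^2}\|\tilde{N}_a u\|_{L^2}$, and the smallness of $\|\mu\|_{\mathcal{C}}$ in \eqref{Small-Cond} allows them to be hidden.

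Once the boundary Rellich identity is in place, the two-sided comparability $\|S(u)\|_{L^2}\approx\|\tilde{N}_a u\|_{L^2}$, obtained by a good-$\lambda$ inequality adapted to $L^2$-averages, closes the $L^2$-Dirichlet problem. To pass from $p=2$ to the open range $2-\varepsilon_0<p<2+\varepsilon_0$, I would invoke Shen's real-variable extrapolation scheme, which requires only $L^2$-solvability together with a weak reverse H\"older inequality for $\tilde{N}_a u$ on surface balls (itself a consequence of interior Caccioppoli combined with the energy estimate). Shen's machinery bypasses both the maximum principle and interior H\"older regularity and is therefore well adapted to the systems setting, delivering \eqref{Main-Est} on a small open interval around~$2$.

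The principal obstacle will be the Rellich-plus-error-term step. In the scalar case one routinely controls cross terms via the maximum principle or pointwise H\"older bounds; here, every error must be paired against $S(u)$ or $\tilde{N}_a u$ in an $L^2$-averaged sense and absorbed using the smallness of $L$ and $\|\mu\|_{\mathcal{C}}$. Getting the algebra to work so that the boundary form arising from the Rellich identity is genuinely coercive in $|\nabla_{\text{tan}}u|$, which hinges on an appropriate symmetric rewriting of the tensor $A$, is the delicate structural point on which the entire scheme depends.
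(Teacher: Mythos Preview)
Your overall architecture (pullback to $\mathbb{R}^n_+$, an $L^2$ core estimate closed via the equivalence $\|S(u)\|_{L^2}\approx\|\tilde N_a u\|_{L^2}$, then real-variable extrapolation to $p$ near $2$) matches the paper. The gap is in the $L^2$ core step: a Rellich identity obtained by testing $\mathcal L u=0$ against $\partial_0 u_\alpha$ is the wrong tool here. What that identity produces is a boundary comparison of $|\nabla_{\mathrm{tan}}u|$ and $|\partial_0 u|$; it controls boundary \emph{gradients}, not $u$ itself, and therefore does not yield $\|\tilde N_a u\|_{L^2}\le C\|f\|_{L^2}$ for $L^2$ Dirichlet data. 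You acknowledge this implicitly when you say the delicate point is ``coercivity in $|\nabla_{\mathrm{tan}}u|$'' --- that is a Regularity-problem concern, and for $f\in L^2$ there is no control on $\nabla_{\mathrm{tan}}f$ to feed into such an estimate. In the scalar setting one can sometimes pass from Regularity to Dirichlet via elliptic measure or duality, but for systems those mechanisms are unavailable, which is precisely the difficulty the paper is addressing.

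The paper's route is different and more direct: one tests $\mathcal L u=0$ against $u_\alpha$ with the weight $x_0$ (not against $\partial_0 u_\alpha$). Using $A^{\alpha\beta}_{0j}=\delta_{\alpha\beta}\delta_{0j}$, integration by parts in
\[
\mathcal I=\iint A^{\alpha\beta}_{ij}\,\partial_j u_\beta\,\partial_i u_\alpha\,x_0\,\zeta\,dx
\]
produces, on one side, $\lambda\iint|\nabla u|^2 x_0$ (the square function), and on the other side the boundary term $\tfrac12\int|u(0,\cdot)|^2=\tfrac12\int|f|^2$ together with bulk errors of Carleson type controlled by $C\|\mu\|_{\mathcal C}^{1/2}\|S(u)\|_{L^2}\|\tilde N_a u\|_{L^2}$. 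This yields directly
\[
\|S(u)\|_{L^2}^2 \le C\|f\|_{L^2}^2 + C\|\mu\|_{\mathcal C}\,\|\tilde N_a u\|_{L^2}^2,
\]
and now the $N\approx S$ equivalence closes the loop under smallness of $\|\mu\|_{\mathcal C}$. (The paper in fact runs this on strips $\{0<x_0<h\}$ with $u_h$ an energy solution vanishing at height $h$, so that all quantities are a priori finite, and then lets $h\to\infty$.) Your Rellich computation does not produce the term $\int|f|^2$ anywhere, so the argument cannot close. Replace the multiplier $\partial_0 u_\alpha$ by $u_\alpha\,x_0$ and the rest of your plan goes through; the extrapolation step in the paper is Dahlberg--Kenig for $p>2$ and a Dahlberg--Kenig--Verchota weighted argument for $p<2$, which is close in spirit to the Shen scheme you cite.
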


\noindent{\it Remark.} We will elaborate in Section~2 on the manner in which any operator of the form \eqref{ES} 
may be rewritten so that the condition demanded in $(i)$ holds. In particular, it will follow that Theorem \ref{S3:T1} 
applies to the operator \eqref{Lame}, provided the rewritten system is strongly elliptic.

\vglue1mm
\noindent{\it Remark 2.} It is of considerable interest to replace the condition \eqref{Car_hatAA} 
by another weaker condition, to the effect that the following measure is Carleson in $\Omega$:
\begin{equation}\label{Car_hatAA-osc}
d\widetilde{\mu}(x)=\left[\left(\osc_{B_{\delta(x)/2}(x)}{A}\right)^{2}\delta^{-1}(x)
+\left(\sup_{B_{\delta(x)/2}(x)} |{B}|\right)^{2}\delta(x)\right]\,dx,
\end{equation}
where $\mbox{osc}_B A=\max_{i,j,\alpha,\beta}\left[\sup_B A_{ij}^{\alpha\beta}-\inf_B A_{ij}^{\alpha\beta}\right]$.
In the scalar case this may be done based on the Carleson condition \eqref{Car_hatAA} and Dahlberg-Kenig perturbation 
result for real and scalar elliptic operators. In the case of systems a similar perturbation result is not available yet. 
We address this issue in a subsequent paper \cite{D1}.
\vglue1mm
\noindent{\it Remark 3.} As alluded to earlier, if $\Omega={\mathbb R}^n_{+}$ then 
Theorem \ref{S3:T1} remains valid if in place of the strong ellipticity condition 
\eqref{EllipA} one assumes the integral condition \eqref{EllipIC}. 
When $\Omega$ is an arbitrary Lipschitz domain since \eqref{EllipIC} does not behave well under the pull-back mapping 
discussed in section \ref{SS:PT} we shall require the strong ellipticity assumption.

In particular, we can apply our main theorem to the Lam\'e system. We get the following:

\vglue1mm
\begin{corollary}\label{MM:C1}
Let $\Omega$ be the Lipschitz domain $\{(x_0,x')\in{\mathbb{R}}\times{\mathbb{R}}^{n-1}:\,x_0>\phi(x')\}$ 
with Lipschitz constant $L=\|\nabla\phi\|_{L^\infty}$ and fix some $a\in(0,1/L)$. Assume the Lam\'e coefficients 
$\lambda,\mu\in L^\infty(\Omega)$ satisfy the following two properties:
\begin{itemize} 
\item[$(i)$] There exists $\mu_0>0$ such that 
\begin{equation}\label{Cond-lame}
\mbox{\rm ess }\inf_{x\in\Omega}\{(\sqrt{8}-1)\mu(x)+\lambda(x),(\sqrt{8}+1)\mu(x)-\lambda(x)\}\ge \mu_0.
\end{equation}
\item[$(ii)$] The following is a Carleson measure in $\Omega$: 
\begin{equation}\label{Car_lame}
d{\nu}(x)=\sup_{B_{\delta(x)/2}(x)}\left(|\nabla{\lambda}|+|\nabla{\mu}|\right)^{2}\delta(x)\,dx.
\end{equation}
\end{itemize}

Then exist two small numbers, $K=K(\mu_0,\|\lambda\|_{L^\infty},\|\mu\|_{L^\infty},n)>0$ along with 
$\varepsilon=\varepsilon(\mu_0,\|\lambda\|_{L^\infty},\|\mu\|_{L^\infty},n,K)>0$, such that if 
\begin{equation}\label{Small-Cond2}
\max\big\{L\,,\,\|\nu\|_{\mathcal C}\big\}\leq K
\end{equation}
and $2-\varepsilon <p<\frac{2(n-1)}{n-2}+\varepsilon$ it follows that $L^p$-Dirichlet problem for the Lam\'e system 
\begin{equation}\label{ES-lame2}
\left\{
\begin{array}{l}
\mathcal Lu=\nabla\cdot\left(\lambda(x)(\nabla\cdot u)I+\mu(x)(\nabla u+(\nabla u)^T) \right)=0 
\,\,\text{in }\,\,\Omega,
\\[4pt]
u(x)=f(x)\,\,\text{ for $\sigma$-a.e. }\,x\in\partial\Omega, 
\\[4pt]
\tilde{N}_a(u) \in L^{p}(\partial \Omega), 
\end{array}
\right.
\end{equation}
is solvable, and each energy solution $u:\Omega\to {\mathbb R}^n$ with datum $f$
satisfies the estimate
\begin{equation}\label{Main-Est-LM}
\|\tilde{N}_a u\|_{L^{p}(\partial \Omega)}\leq C\|f\|_{L^{p}(\partial \Omega;{\BBR}^n)}
\end{equation}
where $C(\mu_0,\|\lambda\|_{L^\infty},\|\mu\|_{L^\infty},n,\|\phi\|_{L^\infty},K,a)>0$ is a finite 
constant independent of the function $f$.
\end{corollary}

\noindent {\it Remark.} We note that it was shown in \cite{BM} that the system \eqref{ES-lame2} satisfies the weakest form of ellipticity - the 
Legendre-Hadamard condition  \eqref{EllipLH} if $\mu>0$ and $\lambda+2\mu>0$. Additionally, physical constraints imply that $\mu>-\frac{2}{n}\lambda$ (as $K=\mu+\frac2n\lambda$ called bulk modulus is positive; $K$ is defined as the ratio of the infinitesimal pressure increase to the resulting relative decrease of the volume).
Hence our condition \eqref{Cond-lame} only imposes one additional assumption, namely that 
$$\lambda<(\sqrt{8}+1)\mu\approx 3.828\mu,$$
or alternatively the Poisson ratio $\nu:=\frac{\lambda}{2(\lambda+\mu)}<0.396$. There are many materials where this holds (for example  aluminium, bronze, steel and many other metals, carbon,  polystyrene, PVC, silicate glasses, concrete, etc) \cite{MR}. Examples of few materials where this assumption fails are gold, lead or rubber. For these three materials $\nu$ is near the incompressibility limit ($\nu=\frac12-$) at which  \eqref{ES-lame2} gives {\rm div}$\,u=0$, i.e., the material is incompressible. Intuitively, as both gold and lead are very soft metals, under pressure they behave as liquids, that is a pressure in one direction will cause them to change shape and stretch in remaining directions in order to preserve volume. Rubber is nearly incompressible with $\nu\approx 0.49$.
\vglue2mm

We shall also establish the following large Carleson norm result showing equivalence between the square and
nontangential maximal functions. 

\begin{theorem}\label{S3:T2} 
Retain the notation and background assumptions made in Theorem~\ref{S3:T1}
{\rm (}in particular, the coefficient tensor $A$ of the operator \eqref{ES} is assumed to be strongly elliptic{\rm )}. 
If $\mu$ defined by \eqref{Car_hatAA} is a Carleson measure in $\Omega$ {\rm (}hence, $\|\mu\|_{\mathcal C}$ is finite 
though not necessarily small{\rm )} then for each exponent $p\in(0,\infty)$ any energy solution $u$ of the problem 
$\mathcal{L}u=0$ in $\Omega$ satisfies 
\begin{equation}\label{RED}
\|\tilde{N}_a(u)\|_{L^p(\partial\Omega)}\approx\|{S}_a(u)\|_{L^p(\partial\Omega)},
\end{equation}
where the implied constants only depend on $n,\,N,\,p,\,\lambda,\,\Lambda,\,a$ and $\|\mu\|_{\mathcal C}$.

In fact, the left-pointing inequality in \eqref{RED} holds under a weaker ellipticity assumption. 
Specifically, assume the coefficient tensor $A$ of the system \eqref{ES} satisfies the Legendre-Hadamard 
condition \eqref{EllipLH} with constants $\lambda,\Lambda$, and assume $\mu$ defined in \eqref{Car_hatAA} 
satisfies $\|\mu\|_{\mathcal C}<\infty$. Then for each exponent $p\in(0,\infty)$ any energy solution $u$ 
of the problem $\mathcal{L}u=0$ in $\Omega$ satisfies 
\begin{equation}\label{RED1}
\|\tilde{N}_a(u)\|_{L^p(\partial\Omega)}\lesssim\|{S}_a(u)\|_{L^p(\partial\Omega)},
\end{equation}
where the implied constant again only depends on $n,\,N,\,p,\,\lambda,\,\Lambda\,,a$ and $\|\mu\|_{\mathcal C}$.
Furthermore, the same conclusion also holds for solutions of the Dirichlet problem \eqref{E:D-strip} on domains $\Omega^h$
with constants independent of chosen parameter $h>0$.
\end{theorem}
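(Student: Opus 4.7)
My plan is to reduce the statement to an $L^{2}$ comparison between $S_a u$ and $\tilde N_a u$ obtained via integration-by-parts identities on $\Omega$, and then to extend the result to all $p>0$ through a Dahlberg-type good-$\lambda$ inequality. Because no DeGiorgi-Nash-Moser theory is available, every step has to be carried out with $L^{2}$-averaged quantities rather than pointwise values of $u$. The central analytical tool is the identity obtained by testing the weak form of $\mathcal L u=0$ against a function of the form $\varphi=u\,\eta^{2}\delta$, where $\eta$ is a smooth cutoff adapted to a sawtooth region above a boundary ball and $\delta(x)$ denotes the distance to $\partial\Omega$. Expanding the derivatives produces a leading term $\int A\nabla u\cdot\nabla u\,\eta^{2}\delta$, a boundary contribution of the form $\int_{\partial\Omega}|u|^{2}\lesssim\|\tilde N_a u\|_{L^{2}}^{2}$, and remainder terms containing $\nabla A$ and $B$ which, after a Cauchy-Schwarz split, fall into the class
$$\int_{\Omega}|u|^{2}\,d\mu\;+\;\varepsilon\int_{\Omega}|\nabla u|^{2}\eta^{2}\delta\,dx,$$
and are tamed by the Carleson inequality $\int_{\Omega}F\,d\mu\lesssim\|\mu\|_{\mathcal C}\|\tilde N_a F\|_{L^{1}(\partial\Omega)}$. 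For the direction $S\lesssim \tilde N$ the full Legendre condition turns the leading term into $\lambda\int|\nabla u|^{2}\eta^{2}\delta$, after which a short rearrangement gives $\|S_a u\|_{L^{2}}\lesssim\|\tilde N_a u\|_{L^{2}}$ with constant depending on $\|\mu\|_{\mathcal C}$.

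For the reverse direction $\tilde N_a u \lesssim S_a u$, only the Legendre-Hadamard condition is needed, but the argument must be replanned since there is no pointwise information about $u$. I would work entirely at the level of $L^{2}$-averages: given $Q\in\partial\Omega$ and $x\in\Gamma_a(Q)$ with $\delta(x)\approx r$, compare the $L^{2}$-average of $u$ on $B(x,\delta(x)/2)$ to the $L^{2}$-average over a corresponding boundary ball by chaining a sequence of Whitney balls from $x$ down to the boundary, and reorganize the differences of successive averages into a telescoping sum that is controlled by a discrete square-function-type quantity plus the boundary value. Caccioppoli for energy solutions under Legendre-Hadamard remains available via G{\aa}rding's inequality, which is the only coercivity needed at this stage, and the coefficient-induced errors picked up along the chain are again of the form $\int F\,d\mu$ with $F\lesssim \tilde N_a u$. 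After passing to the supremum in $x$ over $\Gamma_a(Q)$ and a Hardy-Littlewood maximal bound on the boundary data, this should deliver a pointwise estimate
$$\tilde N_a u(Q)\;\lesssim\;S_a u(Q)\,+\,\mathcal{M}(u|_{\partial\Omega})(Q),$$
modulo a Carleson-controlled remainder.

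To pass from the $L^{2}$ comparisons to all $p>0$ I would invoke a Dahlberg-type good-$\lambda$ inequality. Covering $\{\tilde N_a u>2\alpha\}$ by a maximal Whitney family $\{Q_j\}$ and applying the local $L^{2}$ comparison above on each component where $S_a u\le\gamma\alpha$, one establishes
$$\bigl|\{\tilde N_a u>2\alpha,\;S_a u\le\gamma\alpha\}\cap Q_j\bigr|\;\le\;\beta(\gamma,\|\mu\|_{\mathcal C})\,|Q_j|,\qquad\beta\to 0\;\text{as}\;\gamma\to 0.$$
Integrating against $p\,\alpha^{p-1}$ yields $\|\tilde N_a u\|_{L^{p}}\lesssim\|S_a u\|_{L^{p}}$ for every $p>0$, and the symmetric argument gives the opposite inequality under the full Legendre assumption.

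The main obstacle I expect is the chain/telescoping step in the $\tilde N\lesssim S$ direction: without any pointwise control on $u$, one must compare $L^{2}$-averages at successive Whitney scales, quantify the coefficient-induced drift of these averages from ball to ball, and sum the resulting errors into a single Carleson-controlled quantity -- all while assuming only the Legendre-Hadamard condition rather than full Legendre ellipticity. The Carleson bound on $\nabla A$ and $B$ is precisely what keeps the accumulated error finite and, once inserted into the good-$\lambda$ machinery, allows the estimate to close quantitatively in $\|\mu\|_{\mathcal C}$ without any smallness hypothesis.
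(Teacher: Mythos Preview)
Your outline for the direction $S_a\lesssim\tilde N_a$ is essentially the paper's argument: an integration-by-parts identity (Lemma~\ref{S3:L4}) yields the local $L^2$ bound (Corollary~\ref{S5:C3}), and the extension to all $p>0$ is then Fefferman--Stein (Corollary~\ref{S5:C4}).

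For the direction $\tilde N_a\lesssim S_a$, however, your plan has a genuine gap. The chaining-to-the-boundary argument you sketch, even if it could be made rigorous for $L^2$-averages, would at best produce a pointwise inequality of the form $\tilde N_a u(Q)\lesssim S_a u(Q)+\mathcal M(f)(Q)$. Integrating this in $L^p$ gives $\|\tilde N_a u\|_{L^p}\lesssim\|S_a u\|_{L^p}+\|f\|_{L^p}$, not the desired $\|\tilde N_a u\|_{L^p}\lesssim\|S_a u\|_{L^p}$. Your good-$\lambda$ step does not repair this: applying the local estimate on a Whitney cube $Q_j$ of $\{\tilde N_a u>\nu/32\}$ still leaves the boundary contribution, and nothing in your setup forces $\mathcal M(f)\lesssim\gamma\nu$ on the cubes in question.

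The paper's route is structurally different and avoids the boundary altogether. One introduces a stopping-time Lipschitz function $\hbar_{\nu,a}(w)$ (Lemma~\ref{S3:L5}) whose graph lies entirely in the interior of $\Omega$, and then performs an integration by parts in the region \emph{above} this graph (Lemma~\ref{S3:L8}) to control $\int_{\Delta_r}|u(\theta\hbar,\cdot)|^2$ by a mixed term $\|S_b u\|_{L^2}\|\tilde N_a u\|_{L^2}+\|S_b u\|_{L^2}^2$ plus a corkscrew value that is killed by subtracting a constant. Because $\tilde N_a$ is defined through $L^2$-averages over balls, a single graph carries no information about those averages; this forces the use of an \emph{entire family} of graphs $\theta\hbar$, $\theta\in[1/6,6]$, together with the averaging Lemma~\ref{Lw-u} linking $\tilde w^2$ on the graph to $\int_{1/6}^{6}|\tilde u(\theta\hbar,\cdot)|^2\,d\theta$. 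The resulting good-$\lambda$ (Lemma~\ref{LGL}) is then necessarily stated with the nonstandard side conditions $(M(S_b^2 u))^{1/2}\le\gamma\nu$ and $(M(S_b^2 u)M(\tilde N_a^2 u))^{1/4}\le\gamma\nu$, precisely because the local estimate contains the mixed $S\cdot\tilde N$ term rather than a pure $S^2$ term. None of this machinery---the interior stopping-time graph, the $\theta$-family, the averaging lemma, or the mixed-term good-$\lambda$---appears in your proposal, and it is exactly what replaces the missing DeGiorgi--Nash--Moser/Harnack input.
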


\begin{proof}  This follows from Corollary~\ref{S5:C4} and Proposition~\ref{S3:C7} . We shall make appropriate comments 
in the proofs of Corollary~\ref{S5:C4} and Proposition~\ref{S3:C7} where a modified argument is required 
when dealing with the domains $\Omega^h$.
\end{proof}

In this vein we wish to note that the papers \cites{AA1,AR2} have established \eqref{RED1} 
for coefficient tensors $A$ which are $t$-independent.

The paper is organised as follows. In Section~\ref{S2} we introduce basic notions and definitions needed throughout. 
In Section~3 we discus the $L^2$-Dirichlet problem and also give the proof of our main result. In Section~4 we 
establish important estimates for the square-function. Subsequently, in Section~5, we produce similar estimates 
for the nontangential maximal operator. Finally, Section~6 deals with the $L^p$-solvability for $p$ near $2$ 
using extrapolation arguments. In section 7 we then discuss Corollary \ref{MM:C1}.

%%%%%%
\section{Definitions and background results}
\label{S2}

For a vector-valued function $u=(u_{\alpha})_{1\leq\alpha\leq N}:\Omega\to{\BBR}^{N}$ we let 
$\nabla u$ denote the Jacobian matrix of $u$. The latter is defined as the matrix with entries
\begin{equation}\label{Eqqq-2} 
\left(\nabla u\right)_{i}^{\alpha}=\partial_{i} u_{\alpha} 
=\frac{\partial u_{\alpha}}{\partial x_{i}}
\end{equation}
for $i\in\{0,\ldots,n-1\}$ and $\alpha\in\{1,\ldots,N\}$.

Given an open set $\Omega\subseteq{\mathbb{R}}^n$, for $0\leq k\leq\infty$ we use $C^{k}(\Omega;{\BBR}^{N})$ 
to denote the space of all ${\BBR}^{N}$-valued functions in $\Omega$ with continuous partial derivatives up to 
order $k$. Also, we shall let $C^{k}_{0}(\Omega;{\BBR}^{N})$ be the subspace of $C^{k}(\Omega;{\BBR}^{N})$ 
consisting functions that are compactly supported in $\Omega$. For $k\in{\mathbb{N}}$ and $1\leq p<\infty$, 
let $W^{k,p}(\Omega;{\BBR}^{N})$ be the Sobolev space which is the collection of ${\BBR}^{N}$-valued locally 
integrable functions in $\Omega$ having distributional derivatives of order $\leq k$ in $L^p(\Omega;{\BBR}^{N})$. 
When $k=1$, equip this space with the norm
\begin{equation}\label{EFFV} 
\|u\|_{W^{1,p}(\Omega)}:=\left[\int_{\Omega}\left(|u(x)|^{p}+|(\nabla u)(x)|^{p}\right)\,dx\right]^{1/p}.
\end{equation}
Also, let $W^{k,p}_{\rm loc}(\Omega;{\BBR}^{N})$ stands for the local version of $W^{k,p}(\Omega;{\BBR}^{N})$. 
Similarly, we denote by $\dot{W}^{k,p}(\Omega;{\BBR}^{N})$ the homogeneous version of the $L^p$-based Sobolev 
space of order one in $\Omega$. When $k=1$, this is endowed with the semi-norm
\begin{equation}\label{EFFV2} 
\|u\|_{\dot{W}^{1,p}(\Omega)}:=\left[\int_\Omega|(\nabla u)(x)|^{p}\,dx\right]^{1/p}.
\end{equation}

Throughout this paper, by a weak solution of \eqref{ES} in $\Omega$ we shall understand a function 
$u\in W^{1,2}_{\rm loc}(\Omega;{\BBR}^{N})$ satisfying $\mathcal{L}u=0$ in the sense of distributions 
in $\Omega$. 
%%%%%%

%%%%%
\subsection{Non-tangential maximal and square functions}
\label{SS:NTS}

Consider a domain of the form 
\begin{equation}\label{Omega-111}
\Omega=\{(x_0,x')\in\BBR\times{\BBR}^{n-1}:\, x_0>\phi(x')\},
\end{equation}
where $\phi:\BBR^{n-1}\to\BBR$ is a Lipschitz function with Lipschitz constant given by 
$L:=\|\nabla\phi\|_{L^\infty(\BBR^{n-1})}$. For each point $x\in{\mathbb{R}}^n$ abbreviate 
$\delta(x):=\mbox{dist}(x,\partial\Omega)$. In particular, %
\begin{equation}\label{PTFCC}
\delta(x)\approx x_0-\phi(x')\,\,\text{ uniformly for }\,\,x=(x_0,x')\in\Omega.
\end{equation}

A cone {\rm (}or non-tangential approach region{\rm )} of aperture $a\in(0,\infty)$ 
with vertex at the point $Q=(x_0,x')\in{\mathbb{R}}\times{\mathbb{R}}^{n-1}$ is defined as
\begin{equation}\label{TFC-6}
\Gamma_{a}(Q)=\big\{y=(y_0,y')\in{\mathbb{R}}\times{\mathbb{R}}^{n-1}:\,a(y_0-x_0)>|x'-y'|\big\}.
\end{equation}
Imposing the demand that $a\in(0,1/L)$ then ensures that $\Gamma_{a}(Q)\subseteq\Omega$ whenever $Q\in\partial\Omega$.
In particular, when $\Omega=\BBR^n_+$ all parameters $a\in(0,\infty)$ may be considered.
Sometimes it is necessary to truncate $\Gamma_{a}(Q)$ at height $h$, in which scenario we write
\begin{equation}\label{TRe3}
\Gamma_{a}^{h}(Q):=\Gamma_{a}(Q)\cap\{x\in\Omega:\,\delta(x)\leq h\}.
\end{equation}

\begin{definition}\label{D:S}
For $\Omega \subset \mathbb{R}^{n}$ as above and $a\in(0,1/L)$, the square function of some 
$u\in W^{1,2}_{\rm loc}(\Omega; {\BBR}^{N})$ is defined at each $Q\in\partial\Omega$ by
\begin{equation}\label{yrdd}
S_{a}(u)(Q):=\left(\int_{\Gamma_{a}(Q)}|(\nabla u)(x)|^{2}\delta(x)^{2-n}\,dx\right)^{1/2}
\end{equation}
and, for each $h>0$, its truncated version is given by 
\begin{equation}\label{yrdd.2}
S_{a}^{h}(u)(Q):=\left(\int_{\Gamma_{a}^{h}(Q)}|(\nabla u)(x)|^{2}\delta(x)^{2-n}\,dx\right)^{1/2}.
\end{equation}
\end{definition}

A simple application of Fubini's theorem gives 
\begin{equation}\label{SSS-1}
\|S_{a}(u)\|^{2}_{L^{2}(\partial\Omega)}\approx\int_{\Omega}|(\nabla u)(x)|^{2}\delta(x)\,dx.
\end{equation}

\begin{definition}\label{D:NT.a} 
For $\Omega\subset\mathbb{R}^{n}$ as above and $a\in(0,1/L)$, the nontangential maximal function of some 
$u\in C^{\,0}(\Omega;{\BBR}^{N})$ and its truncated version at height $h$ are defined at each $Q\in\partial\Omega$ by
\begin{equation}\label{SSS-2}
N_{a}(u)(Q):=\sup_{x\in\Gamma_{a}(Q)}|u(x)|\,\,\text{ and }\,\,
N^h_{a}(u)(Q):=\sup_{x\in\Gamma^h_{a}(Q)}|u(x)|.
\end{equation}
\end{definition}

Moreover, we shall also consider a related version  of the above nontangential maximal function.
This is denoted by $\tilde{N}_a$ and is defined using $L^2$ averages over balls in the domain $\Omega$. 
Specifically, we make the following definition.

\begin{definition}\label{D:NT.b} 
For $\Omega\subset\mathbb{R}^{n}$ as above and $a\in(0,1/L)$,
given $u\in L^2_{\rm loc}(\Omega;{\BBR}^{N})$ we set
\begin{equation}\label{SSS-3}
\tilde{N}_{a}(u)(Q):=\sup_{x\in\Gamma_{a}(Q)}w(x)\,\,\text{ and }\,\,
\tilde{N}_{a}^{h}(u)(Q):=\sup_{x\in\Gamma_{a}^{h}(Q)}w(x)
\end{equation}
for each $Q\in\partial\Omega$ and $h>0$ where, at each $x\in\Omega$, 
\begin{equation}\label{w}
w(x):=\left(\dint_{B_{\delta(x)/2}(x)}|u|^{2}(z)\,dz\right)^{1/2}.
\end{equation}
\end{definition}

Here and elsewhere, a barred integral indicates integral average.
We note that, given $u\in L^2_{\rm loc}(\Omega;{\BBR}^{N})$, the function $w$ 
associated with $u$ as in \eqref{w} is continuous and $\tilde{N}_a(u)=N_a(w)$ 
everywhere on $\partial\Omega$. For systems with bounded measurable coefficients, the best regularity we can expect 
from a weak solution of \eqref{ES} is $u\in W^{1,2}_{\rm loc}(\Omega;\BBR^N)$. 
In particular, $u$ might not be pointwise well-defined. In the scalar case $N=1$ 
by the De Giorgi-Nash-Moser estimates the situation is different as the solutions are locally H\"older continuous. 
Hence, while in the scalar case considering $N_a$ typically suffices, in the case of systems the consideration of 
$\tilde{N}_a$ becomes necessary. Note that our condition \eqref{Car_hatAA} implies that $u$ exhibits better regularity, 
as $A$ has a locally bounded gradient. From this once then deduces that $u\in W^{2,2+\varepsilon}_{\rm loc}(\Omega;\BBR^N)$ for 
some $\varepsilon>0$, hence our $u$ does have well-defined pointwise values in dimensions $n=2,3,4$. In a subsequent paper 
we will consider operators satisfying the weaker condition \eqref{Car_hatAA-osc}; for such operators weak solutions $u$ 
have, in general, well-defined pointwise values only when $n=2$.

%%%%%%
\subsection{The Carleson measure condition}
\label{SS:Car} 

We begin by recalling the definition of a Carleson measure in a domain $\Omega$ as in \eqref{Omega-111}. 
For $P\in{\BBR}^n$, define the ball centered at $P$ with the radius $r>0$ as
\begin{equation}\label{Ball-1}
B_{r}(P):=\{x\in{\BBR}^n:\,|x-P|<r\}.
\end{equation}
Next, given an arbitrary location $Q \in \partial\Omega$ along with a scale $r>0$, we shall abbreviate 
$\Delta=\Delta_{r}=\Delta_{r}(Q)=\partial\Omega\cap B_{r}(Q)$ and refer to this as the 
surface ball centered at $Q$ and of radius $r$. The Carleson region $T(\Delta_r(Q))$ associated with such a surface ball 
is then defined by
\begin{equation}\label{tent-1}
T(\Delta_{r}(Q)):=\Omega\cap B_{r}(Q).
\end{equation}

\begin{definition}\label{Carleson}
Let $\Omega$ be as in \eqref{Omega-111}.
A Borel measure $\mu$ in $\Omega$ is said to be Carleson if it has finite Carleson norm, i.e., 
\begin{equation}\label{CMC-1}
\|\mu\|_{\mathcal C}=\sup_{\Delta}\frac{\mu\left(T(\Delta)\right)}{\sigma(\Delta)}<\infty,
\end{equation}
where the supremum is taken over all surface balls $\Delta\subseteq\partial\Omega$, and 
where $\sigma$ is the surface measure on $\partial\Omega$. 
\end{definition}

As regards the elliptic operator introduced in \eqref{ES}, in all that follows we 
shall assume that the coefficients $A$ and $B$ satisfy the following natural conditions.  
First, we assume that the entries $A_{ij}^{\alpha \beta}$ of $A$ are in ${\rm Lip}_{\rm loc}(\Omega)$ 
(locally Lipschitz) while the entries $B_{i}^{\alpha \beta}$ of $B$ are in $L^\infty_{\rm loc}(\Omega)$. 
Second, we assume that
\begin{equation}\label{CarA}
d\mu(x)=\left[\left(\sup_{B_{\delta(x)/2}(x)}|\nabla A|\right)^{2}
+\left(\sup_{B_{\delta(x)/2}(x)}|B|\right)^{2}\right]\delta(x) \,dx
\end{equation}
is a Carleson measure in $\Omega$. Occasionally (but not everywhere) we will additionally 
assume that its Carleson norm $\|\mu\|_{\mathcal{C}}$ is sufficiently small. 
The following result plays a significant role in the future.

\begin{proposition}\label{T:Car}
Let $\Omega$ be as in \eqref{Omega-111} and fix some $a\in(0,1/L)$. Given a function $f\in L^\infty_{\rm loc}(\Omega)$, define 
$d\nu=f\,dx$ and $d\mu(x)=\left[\esssup_{B_{\delta(x)/2}(x) }|f|\right]dx$. Assume that $\mu$ is a Carleson measure in $\Omega$. 
Then there exists a finite constant $C=C(L,a)>0$ such that for every $u\in L^{2}_{\rm loc}(\Omega;{\BBC})$ one has
\begin{equation}\label{Ca-222}
\int_{\Omega}|u(x)|^2\,d\nu(x)\leq C\|\mu\|_{\mathcal{C}} 
\int_{\partial\Omega}\left(\tilde{N}_{a}(u)\right)^2\,d\sigma.
\end{equation}
\end{proposition}

\begin{proof} Let 
$$
\Omega=\bigcup_{i}{\mathcal O}_i
$$
be a Whitney decomposition of $\Omega$. Without loss of generality, 
assume that the Whitney sets ${\mathcal O}_i$ are such that ${\mathcal O}_i\subset B_{\delta(x)/2}(x)$
for each $x\in {\mathcal O}_i$. Also, recall that $|{\mathcal O}_i|\approx |B_{\delta(x)/2}(x)|$ where, generally speaking,  
$|E|$ denotes the Lebesgue measure of the Lebesgue measurable set $E\subseteq{\mathbb{R}}^n$. 
It follows that on each ${\mathcal O}_i$ we have
$$
\int_{{\mathcal O}_i}|u(x)|^2\,d\nu(x)\leq \left[\esssup_{{\mathcal O}_i}|f|\right] \int_{{\mathcal O}_i}|u(x)|^2\,dx.
$$
By the definition \eqref{w} for $w$ it follows that for any $y\in {{\mathcal O}_i}$ we have
$$
\int_{{\mathcal O}_i}|u(x)|^2\,d\nu(x)\lesssim\left[\esssup_{B_{\delta(y)/2}(y) }|f|\right] w(y)^2|{\mathcal O}_i|.
$$
Integrating in $y\in {{\mathcal O}_i}$ we then conclude that 
$$
\int_{{\mathcal O}_i}|u(x)|^2\,d\nu(x)\lesssim \int_{{\mathcal O}_i} w(y)^2d\mu(y).
$$
Summing over all $i$ we obtain
$$
\int_{\Omega}|u(x)|^2\,d\nu(x)\lesssim \int_{\Omega} w(y)^2d\mu(y)
\lesssim \|\mu\|_{\mathcal C}\int_{\partial\Omega}N_a(w)^2d\sigma,
$$
where the last inequality follows from the usual inequality for Carleson measures. 
Since $\tilde{N}_{a}(u)=N_a(w)$ the claim follows.
\end{proof}

The aforementioned assumptions on coefficients of the system \eqref{ES} 
are compatible with the change of variables described in the next two subsections. 

%%%%%
\subsection{Reformulations of \eqref{ES} and ellipticity}
\label{SS:Nor}

In this section, we rewrite the elliptic system \eqref{ES} in a more convenient form. 
As we allow one derivative to fall on the tensor $A$ we shall assume that this has entries in 
$\text{Lip}_{\rm loc}(\Omega)$.

Let $A_{ij}=[A_{ij}^{\alpha \beta}]_{\alpha,\beta}$ for $i,j\in\{0,1,\ldots, n-1\}$. 
Hence, each $A_{ij}$ is an $N\times N$ matrix. It is natural to assume that $A_{00}$ 
(the principle minor of $A$) is invertible. For example, this is guaranteed whenever 
\eqref{EllipA} or \eqref{EllipLH} holds. To proceed, consider the tensors 
$\widehat{A}=[\widehat{A}_{ij}^{\alpha\beta}]_{i,j,\alpha,\beta}$ and 
$\widehat{B}=[\widehat{B}_{i}^{\alpha\beta}]_{i,\alpha,\beta}$ defined by
\begin{equation}\label{hatA}
\widehat{A}^{\alpha\beta}_{ij}:=\sum_{\gamma=1}^{N}\left[A_{00}^{-1}\right]^{\alpha\gamma}A_{ij}^{\gamma\beta},
\end{equation}
and
\begin{equation}\label{hatB}
\widehat{B}^{\alpha\beta}_{i}:=\sum_{\gamma=1}^{N}\left(\left[A_{00}^{-1}\right]^{\alpha \gamma} 
B_{i}^{\gamma\beta}-\sum_{k=0}^{n-1}\partial_k\left([A_{00}^{-1}]^{\alpha\gamma}\right)A_{ki}^{\gamma\beta}\right).
\end{equation}

Then the original regularity assumptions made on $A,B$ are retained for \eqref{hatA}-\eqref{hatB}, i.e., 
$\widehat{A}$ and $\widehat{B}$ have entries in $\text{Lip}_{\rm loc}(\Omega)$ and $L^\infty_{\rm loc}(\Omega)$, respectively.
In addition, observe that $\widehat{A}$ is diagonalized in the $x_0$ variable, namely $\widehat{A}_{00}=I_{N\times N}$. Let
\begin{equation}\label{hatL}
\widehat{\mathcal{L}}u:=\left[\partial_{i}\left(\widehat{A}_{ij}^{\alpha\beta}(x)\partial_{j}u_{\beta}\right)
+\widehat{B}_{i}^{\alpha\beta}(x)\partial_{i}u_{\beta}\right]_{\alpha}.
\end{equation}
If $u\in W^{1,2}_{\rm loc}(\Omega;{\BBR}^N)$ satisfies $\mathcal{L}u=0$ in $\Omega$ then  
\begin{align}\label{hatL1}
\widehat{\mathcal{L}}u &=\sum_{\gamma}[A_{00}^{-1}]^{\alpha\gamma} 
\left[\partial_{i}\left(A_{ij}^{\gamma\beta}\partial_{j}u_{\beta}\right) 
+B_{i}^{\gamma\beta}\partial_{i}u_{\beta}\right] 
\nonumber\\[4pt]
&\quad +\sum_{\gamma}\partial_{i}\left([A_{00}^{-1}]^{\alpha\gamma}\right)A_{ij}^{\gamma\beta}\partial_{j}u_{\beta}
-\sum_{\gamma}\partial_k\left([A_{00}^{-1}]^{\alpha\gamma}\right)A_{ki}^{\gamma\beta}\partial_iu_\beta
\nonumber\\[4pt]
&= 0.
\end{align}
Here we have used the equation $\mathcal L u=0$ for the first two terms of \eqref{hatL1}, and that 
the two terms in the second line cancel (as may be seen by permuting indices $i\to j\to k\to i$ 
in the last term). 

For technical reasons we will also require that $\widehat{A}_{0j}=0_{N\times N}$ for $j>0$. 
This may be achieved as follows. Since
\begin{equation}\label{yr4DDF-111}
\partial_0(\widehat A^{\alpha\beta}_{0j}\partial_ju_\beta)=\partial_j(\widehat A^{\alpha\beta}_{0j}\partial_0u_\beta)
-\partial_j(\widehat A^{\alpha\beta}_{0j})\partial_0u_\beta+\partial_0(\widehat A^{\alpha\beta}_{0j})\partial_ju_\beta,
\end{equation}
it follows that if we define a new coefficient tensor by taking
\begin{equation}\label{yr4DDF}
\overline{A_{ij}^{\alpha\beta}}:=
\begin{cases}
\widehat A_{ij}^{\alpha\beta},&\quad\mbox{if $i,j>0$ or $i=j=0$,}
\\[6pt]
\widehat A_{ij}^{\alpha\beta}+\widehat A_{ji}^{\alpha\beta},&\quad\mbox{if $i>0$ and $j=0$,}
\\[6pt]
0,&\quad\mbox{if $i=0$ and $j>0$,}
\end{cases}
\end{equation}
then the difference $\widehat{\mathcal L} u-\partial_i\left(\overline{A_{ij}^{\alpha\beta}}\partial_ju_\beta\right)$
consists of just first-order terms. This is due to the fact that 
$$
\widehat A_{ij}^{\alpha\beta}+\widehat A_{ji}^{\alpha\beta}=\overline{A_{ij}^{\alpha\beta}}+\overline{A_{ji}^{\alpha\beta}}
$$ 
which, in light of \eqref{yr4DDF-111}, implies that the two operators $\widehat{\mathcal L} u$ and 
$\partial_i\left(\overline{A_{ij}^{\alpha\beta}}\partial_ju_\beta\right)$ have identical second-order terms.
As such, $u$ may be regarded as a solution of a system similar to \eqref{hatL1} with $\overline{A_{ij}^{\alpha\beta}}$ 
replaced by $\widehat{A}_{ij}^{\alpha\beta}$ and appropriately modified coefficients matrix of the first order terms. 

Observe that the coefficients $\overline{A}$, $\overline{B}$ of the new system will still satisfy 
a Carleson condition with norm controlled by $\|\mu\|_{\mathcal{C}}$. That is, if the original 
tensors $A$ and $B$ are such that \eqref{CarA} is a Carleson measure then
\begin{equation}\label{Car_hatA}
d\overline{\mu}(x)=\left[\left(\sup_{B_{\delta(x)/2}(x)}|\nabla\overline{A}(x)|\right)^{2}
+\left(\sup_{B_{\delta(x)/2}(x)}|\overline{B}(x)|\right)^{2}\right]\delta(x)\,dx
\end{equation}
is also a Carleson measure in $\Omega$ whose Carleson norm $\|\overline{\mu}\|_{\mathcal{C}}$ 
may be estimated in terms of the original norm $\|{\mu}\|_{\mathcal{C}}$.

\vglue1mm

Let us now discuss briefly how such coefficient changes affect the strong ellipticity. In general, if \eqref{EllipA} 
holds for $A$, it might not hold anymore for \eqref{hatA}, or \eqref{yr4DDF}. For this reason we will assume 
strong ellipticity for \eqref{yr4DDF}. However, in some situations the strong ellipticity for $A$, $\widehat A$, 
and $\overline{A}$ are equivalent. This is always true when $N=1$, i.e., if the the operator \eqref{ES} is scalar. 

A similar observation can be made for equivalence of strong ellipticities of $\widehat A$ and $\overline A$, when $\widehat A$ 
enjoys the following symmetry property:
$$
\widehat A_{ij}^{\alpha\beta}=\widehat A_{ij}^{\beta\alpha}.
$$
In the case of Lam\'e system this happens when we choose $r=(\mu-\lambda)/2$ in \eqref{eqLM}. 

%%%%%
\subsection{Pullback Transformation}
\label{SS:PT}

For a domain $\Omega$ as in \eqref{Omega-111}, consider the mapping 
$\rho:\mathbb{R}^{n}_{+}\to\Omega$ appearing in works of Ne\v{c}as, Dahlberg, 
Kenig-Stein and others, defined by
\begin{equation}\label{E:rho}
\rho(x_0, x'):=\big(x_0+P_{\gamma x_0}\ast\phi(x'),x'\big),
\qquad\forall\,(x_0,x')\in\mathbb{R}^{n}_{+},
\end{equation}
for some fixed $\gamma>0$. The precise choice of $\gamma$ is not important as longs as the map 
$\rho:\mathbb{R}^{n}_{+}\to\Omega$ is a bijection which is always the case for small choice of $\gamma$, say 
$0<\gamma<\gamma_0\|\nabla\phi\|_{L^\infty(\BBR^{n-1})}$ for some small dimensional constant $\gamma_0>0$.
Also, $P$ appearing in \eqref{E:rho} is a non-negative function $P\in C_{0}^{\infty}(\mathbb{R}^{n-1})$ 
and, for each $\lambda>0$, we agree to abbreviate 
\begin{equation}\label{PPP-1a}
P_{\lambda}(x'):=\lambda^{-n+1}P(x'/\lambda),\qquad\forall\,x'\in{\mathbb{R}}^{n-1}.
\end{equation}
Finally, $P_{\lambda}\ast\phi(x')$ is the convolution
\begin{equation}\label{PPP-lambda}
P_{\lambda}\ast\phi(x'):=\int_{\mathbb{R}^{n-1}}P_{\lambda}(x'-y')\phi(y')\,dy'. 
\end{equation}
Observe that $\rho$ extends up to the boundary of ${\BBR}^{n}_{+}$ and maps one-to-one from 
$\partial {\BBR}^{n}_{+}$ onto $\partial\Omega$. Also, for sufficiently small $\gamma\lesssim L$ 
the map $\rho$ is a bijection from $\overline{\mathbb{R}^{n}_{+} }$ onto $\overline\Omega$ 
and, hence, invertible. 

For $u\in W^{1,2}_{\rm loc}(\Omega;\BBR^N)$ that solves $\mathcal{L}u=0$ in $\Omega$ with Dirichlet 
datum $f$ consider $v:=u\circ\rho$ and $\widetilde{f}:=f\circ\rho$. The change of variables 
via the map $\rho$ just described implies that $v\in W^{1,2}_{\rm loc}(\mathbb{R}^{n}_{+};{\BBR}^{N})$ 
is a null-solution of a system, namely 
\begin{equation}\label{ESvvv}
0=\di\left(\widetilde{A}^\alpha(x)\nabla v\right)+\widetilde{B}^\alpha(x)\cdot\nabla v,
\qquad\mbox{for }\alpha\in\{1,2,\dots,N\},
\end{equation}
with boundary datum $\widetilde{f}$ on $\partial\mathbb{R}^{n}_{+}$. Hence, solving a boundary value 
problem for $u$ in $\Omega$ is equivalent to solving a related boundary value problem for $v$ in 
$\mathbb{R}^{n}_{+}$. Crucially, if the coefficients of the original system are such that \eqref{CarA} 
is a Carleson measure, then the coefficients of $\widetilde{A}$ and $\widetilde{B}$ satisfy an analogous 
Carleson condition in the upper-half space. If, in addition, the Carleson norm of \eqref{CarA} 
is small and $\|\nabla\phi\|_{L^\infty}$ is also small, then the Carleson norm of the measure 
associated in the same manner as in \eqref{CarA} with the new coefficients $\widetilde{A}$ and $\widetilde{B}$ 
will be correspondingly small. It is also not hard to see that strong ellipticity is preserved under this change of variables.

Next, we shall discuss the condition $(i)$ of Theorem \ref{S3:T1} in relation to the pull-back transformation $\rho$. 
Assume that the original tensor satisfies $A_{0j}^{\alpha\beta}=\delta_{0j}\delta_{\alpha\beta}$. After applying the 
pull-back $\rho$, the new tensor $\widetilde{A}$ as in \eqref{ESvvv} for the system on $\mathbb{R}^{n}_{+}$ no longer 
satisfies this property. We can remedy this issue by performing the change of coefficients we have discussed in Subsection~\ref{SS:Nor}.

If the function $\phi$ in \eqref{E:rho} has a small Lipschitz norm it follows that the Jacobian of the map 
$\rho$ is very close to the identity (as a small $L^\infty$ perturbation of $I$, with the size of the perturbation 
depending on the $\|\nabla\phi\|_{L^\infty}$). Because of this, the coefficients $\widetilde{A}$ after the pull-back 
will have $|\widetilde{A}_{0j}^{\alpha\beta}-\delta_{0j}\delta_{\alpha\beta}|$ small.
This smallness is preserved after performing the change of coefficients from Subsection~\ref{SS:Nor}.
Hence, if the original tensor $A$ satisfies the strong ellipticity condition, so will do the final tensor of the 
corresponding system in $\mathbb{R}^{n}_{+}$, as long as $\|\nabla\phi\|_{L^\infty}$ is sufficiently small.

The bottom line is that taking the pull-back under the map $\rho$ allows us to reduce the task of solving 
the Dirichlet problem for \eqref{ES} in $\Omega$ to the special case when the underlying domain is 
$\Omega={\BBR}^n_+$ and both conditions $(i)$ and $(ii)$ of Theorem \ref{S3:T1} hold in ${\BBR}^n_+$.

%%%%%
\subsection{Basic estimates}
\label{SS:Ineq}

Here we recall some of the basic the inequalities valid for weak solutions of the operator 
${\mathcal{L}}$. 

\begin{proposition}\label{poincare}{\rm (}Poincar\'e inequality{\rm )}
There exists a finite dimensional constant $C=C(n)>0$ such that, for all balls $B_{R}\subset{\BBR}^{n}$ 
and all $u\in W^{1,2}(B_{R};{\BBR}^{N})$, 
\begin{gather*}
\int_{B_{R}}|u-u_{B_{R}}|^{2}\,dx\leq CR^{2}\int_{B_{R}}|\nabla u|^{2}\,dx, 
\end{gather*}
where
\begin{equation}\label{uuu-AVE}
u_{B_{R}}:=\dint_{B_{R}}u(x)\,dx.
\end{equation}
\end{proposition}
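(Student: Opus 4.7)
The plan is to reduce matters to the classical scalar Poincar\'e inequality on the unit ball and then recover the full statement by componentwise decomposition and scaling. Since
\[
|u-u_{B_R}|^{2}=\sum_{\alpha=1}^{N}\bigl|u_{\alpha}-(u_{\alpha})_{B_R}\bigr|^{2},
\qquad
|\nabla u|^{2}=\sum_{\alpha=1}^{N}|\nabla u_{\alpha}|^{2},
\]
it suffices to prove the scalar case $N=1$; the vector-valued inequality then follows by summing over $\alpha$ with the same constant. This observation eliminates any role played by the target dimension $N$.

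Next I would normalize the ball. Fix a ball $B_R=B_R(x_0)$ and define, for $u\in W^{1,2}(B_R)$, the rescaled function $\tilde u(y):=u(x_0+Ry)$ on $B_1=B_1(0)$. A direct change of variables yields
\[
\int_{B_1}\bigl|\tilde u-\tilde u_{B_1}\bigr|^{2}\,dy
=R^{-n}\int_{B_R}|u-u_{B_R}|^{2}\,dx,
\qquad
\int_{B_1}|\nabla\tilde u|^{2}\,dy
=R^{2-n}\int_{B_R}|\nabla u|^{2}\,dx.
\]
Hence the claim for $B_R$ with constant $CR^{2}$ is equivalent to the claim for $B_1$ with a dimensional constant $C=C(n)$.

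The remaining step is to establish the inequality on $B_1$, which I would do by the standard compactness argument. Assume for contradiction that no such $C$ exists, so there is a sequence $\{u_k\}\subset W^{1,2}(B_1)$ with $(u_k)_{B_1}=0$ (subtract the mean), $\|u_k\|_{L^{2}(B_1)}=1$, yet $\|\nabla u_k\|_{L^{2}(B_1)}\to 0$. The sequence is bounded in $W^{1,2}(B_1)$, so by the Rellich--Kondrachov compact embedding $W^{1,2}(B_1)\hookrightarrow L^{2}(B_1)$ we can extract a subsequence converging to some $u_\infty$ in $L^{2}(B_1)$. The distributional gradient of $u_\infty$ is the weak-$L^{2}$ limit of $\nabla u_k$, which is $0$, so $u_\infty$ is constant on the connected set $B_1$. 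The conditions $(u_k)_{B_1}=0$ and $\|u_k\|_{L^{2}}=1$ pass to the limit, giving $(u_\infty)_{B_1}=0$ and $\|u_\infty\|_{L^{2}(B_1)}=1$, which is incompatible with $u_\infty$ being constant. This contradiction furnishes the desired constant.

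The only nontrivial ingredient here is the Rellich--Kondrachov compactness for $W^{1,2}(B_1)$, which is a standard tool and poses no real obstacle; everything else is bookkeeping via scaling and componentwise reduction.
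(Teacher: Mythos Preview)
Your argument is correct: the componentwise reduction to $N=1$, the scaling to the unit ball, and the Rellich--Kondrachov compactness argument are all standard and valid, and together they yield the stated inequality with a constant depending only on $n$.

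As for comparison with the paper: the paper does not actually prove this proposition. It is stated in the preliminaries section as a known background inequality and is simply invoked later (notably in the proof of Lemma~\ref{S3:L8}) without any accompanying argument. So there is no ``paper's own proof'' to compare against; your proposal supplies a complete proof where the paper gives none.
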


\begin{proposition}\label{caccio}{\rm (}Interior Cacciopoli inequality{\rm )} 
Let $\Omega$ be as in \eqref{Eqqq-1}, and let $\mathcal L$ as in \eqref{ES} satisfy 
the Legendre-Hadamard condition \eqref{EllipLH}. In addition, assume that there exists some $M\in(0,\infty)$ 
with the property that for a.e. $x\in\Omega$ one has
\begin{equation}\label{AltC}
|\nabla A(x)|\le M\delta^{-1}(x),\qquad |B(x)|\le M\delta^{-1}(x).
\end{equation}

Then there exists a finite positive constant $C=C(n,N,\lambda,\Lambda, M)>0$ such that if $B_{4R}\subset\Omega$
and $u\in W^{1,2}(B_{2R};{\BBR}^{N})$ solves $\mathcal Lu=0$ in $B_{2R}$ it follows that 
\begin{gather*}
\int_{B_{R}}|\nabla u|^{2}\,dx\leq CR^{-2}\int_{B_{2R}}|u|^{2}\,dx. 
\end{gather*}
\end{proposition}

\begin{proof} 
Consider a smooth cutoff function $\varphi$ such that $\varphi=1$ on $B_R$ and
$\varphi$ vanishes outside $B_{2R}$. Let $v=u\varphi$. The original claim readily follows
once we establish
\begin{equation}\label{eq342}
\int_{B_{2R}}|\nabla v|^{2}\,dx\leq CR^{-2}\int_{B_{2R}}|u|^{2}\,dx.
\end{equation}
Note that $v\in W^{1,2}_0(B_{2R})$. Suppose first that the coefficients of $\mathcal L$ are constant on the ball $B_{2R}$. 
As in \cite[p.\,9]{Y}, via the via the Fourier transform techniques, it follows that 
\begin{equation}\label{ccoeff}
\int_{B_{2R}}|\nabla v|^{2}\,dx\approx 
\int_{B_{2R}}A_{ij}^{\alpha\beta}\partial_jv_\beta\partial_iv_\alpha\,dx,
\end{equation}
where the implicit constants depend only on the constant in the Legendre-Hadamard condition.
We now adapt the idea in \cite[pp.\,11-12]{Y} to the variable coefficient case. Fix $\rho\in(0,1)$ and cover the ball 
$B_{2R}$ by a finite collection of smaller balls $B_{R\rho}(x_k)$ with centers $x_1,x_2,\dots,x_N=N(\rho)$ inside $B_{2R}$, 
where the number of balls $N(\rho)$ in this cover depends only on the dimension $n$ and chosen parameter $\rho$. 
As in \cite{Y} one can construct a family of functions $(\varphi_k)_{1\leq k\leq N}$ such that each $\varphi_k$ 
is supported on $B_{2R\rho}(x_k)$, $|\nabla \varphi_k|\le 1/R$, and $\sum_k\varphi_k^2=1$. 
Consider the functions $v_k=v\varphi_k$. By the constant coefficient result we have that 
\begin{equation}\label{ccoeff33}
\int_{B_{2R}}|\nabla v_k|^{2}\,dx\approx \int_{B_{2R\rho}(x_k)}
A_{ij}^{\alpha\beta}(x_k)\partial_j(v_k)_\beta\partial_i(v_k)_\alpha\,dx.
\end{equation}
On each ball $B_{2R\rho}(x_k)$ the condition imposed in \eqref{AltC} implies small oscillation of the coefficients of $A$, 
that is, $\osc_{B_{2R\rho }} A\le 2M\rho$. In turn, this permits us to estimate 
\begin{align}\label{ccoeff34}
\int_{B_{2R}}|\nabla v_k|^{2}\,dx &\lesssim \int_{B_{2R\rho}(x_k)}A_{ij}^{\alpha\beta}
\partial_j(v_k)_\beta\partial_i(v_k)_\alpha\,dx
\\[6pt]
&\quad+\int_{B_{2R\rho}(x_k)}(A_{ij}^{\alpha\beta}(x_k)-A_{ij}^{\alpha\beta})\partial_j(v_k)_\beta\partial_i(v_k)_\alpha\,dx
\nonumber\\[6pt]
&\le\int_{B_{2R\rho}(x_k)}A_{ij}^{\alpha\beta}\partial_j(v_k)_\beta\partial_i(v_k)_\alpha\,dx
+2M\rho\int_{B_{2R}}|\nabla v_k|^{2}\,dx.
\nonumber
\end{align}
Hence, choosing $\rho\in(0,1)$ sufficiently small ensures that the last term above may be absorbed into the left-hand side. 
For such a choice $\rho\in(0,1)$ we may therefore conclude that 
\begin{equation}\label{ccoeff35}
\int_{B_{2R}}|\nabla v_k|^{2}\,dx\lesssim \int_{B_{2R}}A_{ij}^{\alpha\beta}\partial_j(v_k)_\beta\partial_i(v_k)_\alpha\,dx,
\end{equation}
where the implied constant only depends on the the constant in the Legendre-Hadamard condition.
Summing up in $k$ then yields 
\begin{align}\label{eq17}
\int_{B_{2R}}|\nabla v|^2dx &=\sum_k \int_{B_{2R}}|\nabla v|^2\varphi_k^2\,dx
\nonumber\\[6pt]
&\le 2\sum_k\int_{B_{2R}}\big[|\nabla v_k|^2+|v|^2|\nabla \varphi_k|^2\big]\,dx.
\end{align}
Observe that the second term above may be estimated by $2N(\rho)R^{-2}\int_{B_{2R}}|v|^2dx$, and this  
suits our purposes, given the goal formulated in \eqref{eq342}. To deal with the first term in \eqref{eq17}, 
the idea is to use \eqref{ccoeff35} and the inequality between the arithmetic and geometric means:
\begin{align}\label{TEXAS.babe!}
\sum_k\int_{B_{2R}}|\nabla v_k|^2dx &=\sum_k\int_{B_{2R}}A_{ij}^{\alpha\beta}\partial_j(v_k)_\beta\partial_i(v_k)_\alpha\,dx
\\[6pt]
&\le2\sum_k \int_{B_{2R}}\big[A_{ij}^{\alpha\beta}\varphi_k^2\partial_jv_\beta\partial_iv_\alpha
+|A_{ij}^{\alpha\beta}||v|^2|\nabla \varphi_k|^2\big]\,dx.
\nonumber
\end{align}
The second term above may be controlled by $CR^{-2}\int_{B_{2R}}|v|^2dx$ which, once again, suits our purposes, 
while the first term is just $\int_{B_{2R}}A_{ij}^{\alpha\beta}\partial_jv_\beta\partial_iv_\alpha\,dx$. It follows that
\begin{eqnarray}\label{eqXX33}
\int_{B_{2R}}|\nabla v|^{2}\,dx\lesssim \int_{B_{2R}}A_{ij}^{\alpha\beta}\partial_jv_\beta\partial_iv_\alpha\,dx
+N\Lambda R^{-2}\int_{B_{2R}}|v|^2dx.
\end{eqnarray}
We shall next use the PDE to handle the first term in the right-hand side. Since $v$ vanishes on the boundary 
of $B_{2R}$ after integration by parts using that $v=u\varphi$ we obtain:
\begin{eqnarray}\label{Baylor}
\int_{B_{2R}}A_{ij}^{\alpha\beta}\partial_jv_\beta\partial_iv_\alpha\,dx 
&=&-\int_{B_{2R}}\varphi(\mathcal Lu)_\alpha v_\alpha\,dx+\int_{B_{2R}}\varphi B_i^{\alpha\beta}\partial_iu_\beta v_\alpha\,dx
\nonumber\\
&+&\int_{B_{2R}}A_{ij}^{\alpha\beta}(\partial_j\varphi)u_\beta \partial_iv_\alpha\,dx.
\end{eqnarray}
Since $\mathcal Lu=0$ on $B_{2R}$, the first term in the right-hand side vanishes.
For the second term, given that we have $|B_i^{\alpha\beta}|\le MR^{-1}$, we obtain
\begin{align}\label{eqXX35}
\int_{B_{2R}}\varphi B_i^{\alpha\beta}\partial_iu_\beta v_\alpha\,dx
&=\int_{B_{2R}}B_i^{\alpha\beta}\partial_iv_\beta v_\alpha\,dx
-\int_{B_{2R}}\partial_i\varphi B_i^{\alpha\beta}u_\beta v_\alpha\,dx
\nonumber\\[6pt]
&\lesssim MR^{-1}\|\nabla v\|_{L^2(B_2R)}\|v\|_{L^2(B_{2R})}
\nonumber\\[6pt]
&\quad+MR^{-2}\|u\|_{L^2(B_{2R})}\|v\|_{L^2(B_{2R})}.
\end{align}
Finally, we may estimate the last term in the right-hand side of \eqref{Baylor} as follows:
\begin{eqnarray}\label{eqXX34}
\int_{B_{2R}}A_{ij}^{\alpha\beta}(\partial_j\varphi)u_\beta \partial_iv_\alpha\,dx
\lesssim \Lambda R^{-1}\|\nabla v\|_{L^2(B_2R)}\|u\|_{L^2(B_{2R})}.
\end{eqnarray}
After combining \eqref{eqXX33}-\eqref{eqXX34}, using the arithmetic mean-geometric mean inequality
to handle the first terms in the right-hand side of \eqref{eqXX35} and \eqref{eqXX34}, and also bearing 
in mind that $|v|\le|u|$, we arrive at the conclusion that
\begin{eqnarray}\label{Waco.TX}
\int_{B_{2R}}|\nabla v|^{2}\,dx\le \frac12\int_{B_{2R}}|\nabla v|^{2}\,dx +C(n,N,\lambda,\Lambda,M)R^{-2}\int_{B_{2R}}|u|^{2}\,dx.
\end{eqnarray}
From this, the inequality claimed in \eqref{eq342} readily follows.
\end{proof}

Imposing a slightly stronger ellipticity assumption on $\mathcal L$ allows us to establish 
the following boundary Cacciopoli inequality.

\begin{proposition}\label{caccioB}{\rm (}Boundary Cacciopoli inequality{\rm )} 
Let $\Omega$ be as in \eqref{Eqqq-1} and suppose the system $\mathcal L$ is as in \eqref{ES}. In addition,  
assume the coefficients of $\mathcal L$ satisfy $|B(x)|\le M\delta^{-1}(x)$ for some $M\in(0,\infty)$ 
and a.e. $x\in\Omega$, and that either
\begin{itemize}
\item[(i)] $\mathcal L$ satisfies the Legendre condition \eqref{EllipA}, or
\item[(ii)] $\Omega={\mathbb R}^n_{+}$ and $\mathcal L$ satisfies the condition \eqref{EllipIC}.
\end{itemize}

Then there exists a finite positive constant $C=C(n,N,\lambda,\Lambda,M)>0$ such that if $R>0$ and 
$u\in W^{1,2}(T(\Delta_{2R}))$ satisfies $\mathcal Lu=0$ in $T(\Delta_{2R})$ as well as 
${\rm Tr}\,u=0$ on $\Delta_{2R}$ then
\begin{gather*}
\int_{T(\Delta_{R})}|\nabla u|^{2}\,dx\leq CR^{-2}\int_{T(\Delta_{2R})}|u|^{2}\,dx. 
\end{gather*}
\end{proposition}

This boundary Cacciopoli inequality is needed for the extrapolation argument. 
See \cite[Remark~1.3]{S3} for a proof based on the Hardy's inequality.
See also \cite[Proposition~1.14]{FMZ} for an argument working on domains with more general boundaries (uniform). 
In particular, the proof given in \cite{FMZ} does not require strong ellipticity assumption (i) and works also 
with the integral condition (ii).

%%%%%%
\section{The $L^2$-Dirichlet problem}
\label{S3}

In anticipation to formulating the $L^p$-Dirichlet problem we first recall the notion of classical solvability, 
via the Lax-Milgram lemma. Given a domain $\Omega$ as in \eqref{Omega-111}, consider the bilinear form 
$\mathcal B:{\dot{W}}^{1,2}(\Omega;{\BBR}^N)\times {\dot{W}}^{1,2}_0(\Omega;{\BBR}^N) \to\mathbb R$ defined by
\begin{equation}\label{eq-BF}
\mathcal B[u,w]=\int_{\Omega}\left[A_{ij}^{\alpha\beta}\partial_ju_\beta\partial_iw_\alpha
+B_i^{\alpha\beta}\partial_iu_\beta w_\alpha\right]\,dx.
\end{equation}
Clearly, $\mathcal B$ is bounded under the assumptions $A$ has entries in $L^\infty(\Omega)$ and that $B$ satisfies
$|B(x)|\le M\delta^{-1} (x)$. Indeed, for the second term this allows us to use the Cauchy-Schwarz inequality 
followed by an application of Hardy-Sobolev inequality
\begin{equation}\label{HSob}
\int_{\Omega}\frac{|w(x)|^2}{\delta(x)^2}dx\le C\int_\Omega |\nabla w|^2\,dx
\end{equation}
which holds for each function $w\in \dot{W}^{1,2}_0(\Omega;{\BBR}^N)$.
To proceed, let $\dot{B}^{2,2}_{1/2}(\partial\Omega;{\BBR}^N)$ denote the homogeneous Besov space of traces of functions 
in $\dot{W}^{1,2}(\Omega;{\BBR}^N)$. Given an arbitrary $f\in\dot{B}^{2,2}_{1/2}(\partial\Omega;{\BBR}^N)$,  
there exists $v\in \dot{W}^{1,2}(\Omega;{\BBR}^N)$ such that ${\rm Tr}\,v=f$ on $\partial\Omega$. 
Writing $u=u_0+v$, we seek $u_0\in \dot{W}^{1,2}_0(\Omega;{\BBR}^N)$ such that
$$
\mathcal B[u_0,w]=-B[v,w]\,\,\,\mbox{ for all }\,\,w\in \dot{W}^{1,2}_0(\Omega;{\BBR}^N).
$$
Observe that $-B[v,\cdot]\in\big(\dot{W}^{1,2}_0(\Omega;{\BBR}^N)\big)^*$, hence by the Lax-Milgram lemma 
there exists unique solution $u_0\in \dot{W}^{1,2}_0(\Omega;{\BBR}^N)$, provided the form $\mathcal B$ 
is coercive on the space $\dot{W}^{1,2}_0(\Omega;{\BBR}^N)$.

Under the assumption that $\Omega$ is Lipschitz (cf. \eqref{Eqqq-1}) and $\mathcal L$ satisfies the 
Legendre condition \eqref{EllipA} or, alternatively, assuming $\Omega={\mathbb R}^n_+$ and 
that the integral condition  \eqref{EllipIC} holds, we clearly have
$$
\int_{\Omega}A_{ij}^{\alpha\beta}\partial_ju_\beta\partial_iu_\alpha\,dx\ge \lambda \int_\Omega|\nabla u|^2\,dx,
$$
for all $u\in \dot{W}^{1,2}_0(\Omega;{\BBR}^N)$. On the other hand, for the term involving the entries of 
$B$ we may use \eqref{HSob} to estimate 
$$
\left|\int_{\Omega}B_i^{\alpha\beta}\partial_iu_\beta u_\alpha\,dx \right|\le CM\int_\Omega|\nabla u|^2\,dx,
$$
hence
$$
{\mathcal B}[u,u]\ge (\lambda-CM)\|\nabla u\|^2_{L^2(\Omega)},\quad\mbox{for all }u\in \dot{W}^{1,2}_0(\Omega;{\BBR}^N).
$$
This implies coercivity of the bilinear form $\mathcal B$, for small values of $M$. 
Since we are interested in solvability of the Dirichlet problem under the assumption of that 
the Carleson measure of \eqref{Car_hatAA} is small, this is the case since $M\lesssim \|\mu\|^{1/2}_{\mathcal C}$.

It follows that given any $f\in \dot{B}^{2,2}_{1/2}(\partial\Omega;{\BBR}^N)$ there exists a unique 
$u\in \dot{W}^{1,2}(\Omega;{\BBR}^N)$ such that $\mathcal{L}u=0$ in $\Omega$ for $\mathcal L$ 
given by \eqref{ES} and ${\rm Tr}\,u=f$ on $\partial\Omega$. We call such $u$ the {\it energy solution} 
of the elliptic system $\mathcal{L}$ in $\Omega$. With this in hand, we can now define the notion of $L^p$-solvability. 

\begin{definition}\label{D:Dirichlet} 
Let $\Omega$ be the Lipschitz domain introduced in \eqref{Omega-111} and fix an integrability exponent 
$p\in(1,\infty)$. Also, fix a background parameter $a>0$. Consider the following Dirichlet problem 
for a vector valued function $u:\Omega\to{\BBR}^N$:
\begin{equation}\label{E:D}
\left\{
\begin{array}{l}
0=\partial_{i}\left(A_{ij}^{\alpha \beta}(x)\partial_{j}u_{\beta}\right) 
+B_{i}^{\alpha\beta}(x)\partial_{i}u_{\beta}\,\text{ in }\,\Omega,\,\,\,\alpha\in\{1,2,\dots,N\},
\\[6pt]
u(x)=f(x)\,\,\,\text{ for $\sigma$-a.e. }\,\,x\in\partial\Omega, 
\\[6pt]
\tilde{N}_a(u) \in L^{p}(\partial \Omega), 
\end{array}
\right.
\end{equation}
where the usual summation convention over repeated indices {\rm (}$i,j$ and $\beta$ in this case{\rm )} 
is employed. We say the Dirichlet problem \eqref{E:D} is solvable for a given $p\in(1,\infty)$ if there exists a finite constant 
$C=C(\lambda, \Lambda, n, p,\Omega)>0$ such that the unique energy solution $u\in \dot{W}^{1,2}(\Omega;{\BBR}^N)$, 
provided by the Lax-Milgram lemma, corresponding to a boundary datum 
$f\in L^p(\partial\Omega;{\BBR}^N)\cap \dot{B}^{2,2}_{1/2}(\partial\Omega;{\BBR}^N)$, satisfies the estimate
\begin{equation}\label{y7tGV}
\|\tilde{N}_a u\|_{L^{p}(\partial\Omega)}\leq C\|f\|_{L^{p}(\partial\Omega;{\BBR}^N)}.
\end{equation}
In \eqref{E:D} the solution $u$ agrees with $f$ at the boundary in the sense of trace on 
$\dot{W}^{1,2}(\Omega;{\BBR}^N)$ as well as in the sense of a.e. limit \eqref{eq-ae}, as explained below.
\end{definition}

\noindent{\it Remark.} By Lax-Milgram lemma the solution $u$ of \eqref{E:D} is unique 
in the space $\dot{W}^{1,2}(\Omega;{\BBR}^N)$ modulo constants (in ${\BBR}^N$). 
Our additional assumption that at $\sigma$-a.e. point on $\partial\Omega$ we have $u=f\in L^p(\partial\Omega;{\BBR}^N)$  
eliminates the constant solutions and, hence, guarantees genuine uniqueness. Since the space 
$\dot{B}^{2,2}_{1/2}(\partial\Omega;{\BBR}^N)\cap L^p(\partial\Omega;{\BBR}^N)$ is dense in 
$L^p(\partial\Omega;{\BBR}^N)$ for each $p\in(1,\infty)$, it follows that there exists a 
unique continuous extension of the solution operator
\begin{equation}\label{Sol-OP}
f\mapsto u
\end{equation}
to the whole space $L^p(\partial\Omega;{\BBR}^N)$, with $u$ such that $\tilde{N}_a u\in L^p(\partial\Omega)$ 
and the accompanying estimate $\|\tilde{N}_a u \|_{L^{p}(\partial \Omega)} 
\leq C\|f\|_{L^{p}(\partial\Omega;{\BBR}^N)}$ being valid. It is a legitimate question to consider in what sense
we have a convergence of $u$ given by the solution operator \eqref{Sol-OP} to its boundary datum 
$f\in L^{p}(\partial\Omega;{\BBR}^N)$. The answer can be found in the appendix of paper \cite{DP} 
(the proof is given for scalar operators but adapts in a straightforward way to our situation). 
Consider the average $u_{av}:\Omega\to {\mathbb R}^N$ defined by
$$
{u}_{av}(x)=\dint_{B_{\delta(x)/2}(x)} u(y)\,dy,\quad \forall x\in \Omega.
$$
Then 
\begin{equation}\label{eq-ae}
f(Q)=\lim_{x\to Q,\,x\in\Gamma(Q)} u_{av}(x),\qquad\text{for $\sigma$-a.e. }Q\in\partial\Omega.
\end{equation}

We are now ready to establish the main result Theorem \ref{S3:T1}. The solutions to the Dirichlet problem 
in the infinite domain $\Omega=\BBR^n_{+}$ will be obtained as a limit of solutions in
infinite strips $\Omega^h=\{x=(x_0,x')\in\BBR\times{\BBR}^{n-1}:\,0<x_0<h\}$. We define them as follows. 
Keep the same assumptions on the coefficients of $\mathcal L$ as at the beginning of this section 
in order to guarantee unique solvability in the space of energy solutions. Recall that we require 
$|B(x)|\le M\delta^{-1}(x)$ for some small $M$ where $\delta(x)$ measures distance of $x$ to the boundary 
of $\partial\Omega$. It follows that this condition holds also with respect to domains $\Omega^h$ and 
therefore we can also conclude existence and uniqueness of energy solutions on the domains $\Omega^h$ which is needed below.

\begin{definition}\label{D:DirichletStrip} 
Let $\Omega={\BBR}^n_{+}$, and let $\Omega^h$ be the infinite strip 
$$
\Omega^h=\{x=(x_0, x')\in\BBR\times{\BBR}^{n-1}:\,0<x_0<h\},
$$ 
Also, fix some exponent $p\in(1,\infty)$ and pick an aperture parameter $a>0$. 
Let $u$ be a vector-valued function $u:\Omega\to{\BBR}^N$ such that
\begin{equation}\label{E:D-strip}
\left\{
\begin{array}{l}
0=\partial_{i}\left(A_{ij}^{\alpha \beta}(x)\partial_{j}u_{\beta}\right) 
+B_{i}^{\alpha\beta}(x)\partial_{i}u_{\beta}\,\,\text{ in }\,\,\Omega^h,\quad\alpha\in\{1,2,\dots,N\},
\\[6pt]
u(x_0,x')=0\,\,\text{for all }\,\,x_0 \geq h,
\\[6pt]
u(x)=f(x)\,\,\text{ for $\sigma$-a.e. }\,\,x\in\partial\Omega, 
\\[4pt]
\tilde{N}_{a}(u) \in L^{p}(\partial \Omega),
\end{array}
\right.
\end{equation}
with the usual summation convention over repeated indices in effect.

We say the Dirichlet problem \eqref{E:D-strip} is solvable for a given exponent $p\in(1,\infty)$ if 
there exists some finite constant $C=C(p,\Omega)>0$ such that for all boundary data
$f\in L^p(\partial\Omega;{\BBR}^N)\cap \dot{B}^{2,2}_{1/2}(\partial\Omega;{\BBR}^N)$ 
we have that $u\big|_{\Omega^h}$ is the unique ``energy solution" of the problem 
\begin{equation}\label{E:D-energy}
\left\{
\begin{array}{l}
0=\partial_{i}\left(A_{ij}^{\alpha \beta}(x)\partial_{j}u_{\beta}\right) 
+B_{i}^{\alpha\beta}(x)\partial_{i}u_{\beta}\,\,\text{ in }\,\,\Omega^h,\quad\alpha\in\{1,2,\dots,N\},
\\[6pt]
u(x_0,x')=0\,\,\,\text{for }\,\,\,x_0=h,
\\[6pt]
u(x)=f(x)\,\,\text{ for $\sigma$-a.e. }\,\,x\in\partial\Omega, 
\end{array}
\right.
\end{equation}
and satisfies the estimate
\begin{equation}\label{y7tGV-2}
\|\tilde{N}_{a}(u)\|_{L^{p}(\partial\Omega)}\leq C\|f\|_{L^{p}(\partial\Omega;{\BBC})}.
\end{equation}
\end{definition}

\begin{proof}[Proof of Theorem~\ref{S3:T1}] 
As indicated in the previous section there is no loss of generality in assuming that $\Omega=\BBR^n_{+}$, 
that the matrix $A_{00}$ is equal to $I_{N\times N}$, and that $A_{0j}^{\alpha\beta}=0$ for $j>0$ 
(via the pull-back transformation involving the function $\rho$ from Section~\ref{SS:PT}, and 
the change of variables \eqref{hatA} and \eqref{hatB}). The new system will be strongly elliptic 
if the original system was so, thanks to the smallness of Lipschitz constant of the function $\phi$.

\vskip1mm

We will establish the solvability of the Dirichlet problem \eqref{E:D-strip}, applying the results of Sections~4 and 5. 
The constants will not depend on the width of the strip. Then, a limiting argument (sending the width of the domain to infinity) 
proves Theorem~\ref{S3:T1}.

\vskip1mm

Let  $u_h$ be the energy solution in $\Omega^h$ as in Definition \ref{D:DirichletStrip}.
Corollary~\ref{S4:C2} ensures that there exists some finite $C=C(n,N,\lambda,\Lambda)>0$ such that 
\begin{align}\label{S3:L4:E00bbbb}    
\lambda\iint_{\BBR^n_{+}}|\nabla u_h|^{2}x_0\,dx'\,dx_0 
&\le\int_{\BBR^{n-1}}|f(x')|^{2}\,dx'
\nonumber\\[6pt]
&\quad+C\|\mu\|_{\mathcal{C}}\int_{\BBR^{n-1}}\left[\tilde{N}_a(u_h)\right]^{2}\,dx'.
\end{align}
It is important to note that, thanks to our working assumptions on $\Omega_h$ and the fact that $u_h$ is 
an energy solution, we have
$$
\|S_{a}(u_h)\|^2_{L^2({\BBR}^{n-1})}\lesssim\int_{\Omega^h}|\nabla u_h|^2dx<\infty.
$$
While the implicit constant in this estimate does depend on $h$, the estimate itself guarantees 
that the $L^2$ norm of the square function of $u_h$ is finite. Hence, by Theorem~\ref{S3:T2} we have
\begin{align}\label{Lakota}
\lambda\iint_{\BBR^n_+}|\nabla u_h|^{2}x_0\,dx'\,dx_0 
&=C_a\int_{\BBR^{n-1}}\left[S_a(u_h)\right]^{2}\,dx'
\nonumber\\[4pt]
&\approx\int_{\BBR^{n-1}} \left[\tilde{N}_a(u_h)\right]^{2}\,dx'.
\end{align}
From this it follows that
\begin{align}\label{S3:L4:E00bbbbb}    
\int_{\BBR^{n-1}}\left[\tilde{N}_a(u_h)\right]^{2}\,dx'
&\leq C_0\int_{\BBR^{n-1}}|f(x')|^{2}\,dx'
\nonumber\\[6pt]
&\quad+C\|\mu\|_{\mathcal{C}}\int_{\BBR^{n-1}}\left[\tilde{N}_a(u_h)\right]^{2}\,dx'.
\end{align}
The constants in this estimate are independent of $h$. 
Choose $K$ in Theorem~\ref{S3:T1} such that $C\|\mu\|_{\mathcal{C}}<1/2$. Such a choice then entails 
\begin{equation}\label{S3:L4:E00bbbbbbb}    
\int_{\BBR^{n-1}}\left[\tilde{N}_a(u_h)\right]^{2}\,dx'\leq 2C_0\int_{\BBR^{n-1}}|f(x')|^{2}\,dx',
\end{equation}
for all energy solutions $u_h$ of the system \eqref{E:D-strip}.  

We now consider the limit of $u_h$, as $h \to \infty$. For each $h\ge 1$, 
the Lax-Milgram lemma gives 
\begin{equation}\label{WACO}
\|\nabla u_h\|_{L^2({\BBR}^n_+)}\le C\|f\|_{\dot B_{1/2}^{2,2}},
\end{equation}
where the constant $C\in(0,\infty)$ depends on the coercivity constant of the bilinear form $\mathcal B$ 
from \eqref{eq-BF} in $\Omega^h$ (which is uniform in $h$), and $\|A\|_{L^\infty}=\Lambda$. Therefore, this 
bound is uniform in $h$. Keeping this in mid and recalling that $\text{Tr}(u_h)=f$, weak convergence argument 
yields a sub-sequence convergent to some $u$ with $\|\nabla u\|_{L^2({\BBR}^n_{+})}\leq C\|f\|_{\dot B_{1/2}^{2,2}}$ 
and $\text{Tr}(u)=f$. This sub-sequence is therefore strongly convergent to $u$ in $L^2_{\rm loc}({\BBR}^n_+)$ 
by standard functional analysis (if $X,Y$ are Banach spaces and $X$ embeds compactly into $Y$, 
then if a sequence converges weakly in $X$  it must converge strongly in $Y$).

It follows that the $L^2$ averages $w_h$ of $u_h$ converge locally and uniformly to $w$, 
the $L^2$ averages of $u$ in $C_{\rm loc}({\BBR}^n_{+})$. Let $\Gamma_k(x')$ be the doubly truncated 
cone $\Gamma(x')\cap\{1/k<x_0<k\}$. Define 
$$
\tilde{N}_k(u)(x')=\sup_{y\in\Gamma_k(x')}|w(y)|,
$$ 
and consider $\tilde{N}_k(u_h)(x')$ defined analogously. Then we have
$$
\tilde{N}_k(u_h)(x')\to\tilde{N}_k(u)(x')\,\,\,\text{ uniformly on compact subsets $K\subset{\BBR}^{n-1}$.}
$$
Finally, using \eqref{S3:L4:E00bbbbbbb}, this give on each such set $K$,
$$
\|\tilde{N}_k(u)\|_{L^p(K)}=\lim_{h\to\infty}\|\tilde{N}_k(u_h)\|_{L^p(K)}\leq C\|f\|_{L^p({\BBR}^{n-1})}. 
$$
The constant $C$ in the estimate above is independent of $K$ and $k$, 
so taking the supremum in each of $k$ and $K$ gives the desired estimate for $u$ on $\Omega={\BBR}^n$.

The $L^p$ solvability in an interval around the value $p=2$ is established later, in Section~6.
\end{proof}

%%%%%%%%%%%%%%%%%%%%%%

%%%%%%
\section{Estimates for the square function $S(u)$ of a solution}
\label{S4}

In this section we establish a one-sided estimate of the square-function in terms of boundary data and 
the nontangential maximal function. Fix some $h>1$, and recall the infinite strip $\Omega^h$ defined above.
Also, let $u_h$ be the energy solution to \eqref{E:D-energy}, extended to be zero above the height $h$.  
Due to the reductions we have made, it suffices to work with a coefficient tensor satisfying $A_{00}=I_{N\times N}$.

%This section is composed with four subsections that first two sections are devoted for the proof of the first main lemma(Lemma~\ref{L:main1}) and the last two sections are for that of the second main lemma(Lemma~\ref{L:main2}). We construct quantitative methods taking advantage of the given elliptic system and a small Carleson condition. Main difficulty is to show the first main lemma (that square function is dominated by the non-tangential maximal function and the boundary data) controlling the local behavior of a weak solution due to the lack of regularities. In the later sections, we combine stopping time arguments and good $\lambda-$inequality to handle the non-tangential maximal function with the square function and the boundary data. 

%
\begin{lemma}\label{S3:L4}
Let $u_h:\Omega={\mathbb{R}}^n_{+}\to{\BBR}^N$ be as above with the Dirichlet boundary datum $f\in L^{2}(\partial\Omega;{\BBR}^N)$. 
Assume that $A$ is strongly elliptic, satisfies $A_{0j}^{\alpha\beta}=\delta_{\alpha\beta}\delta_{0j}$,
and the measure $\mu$ defined as in \eqref{Car_hatAA} is Carleson. Then there exists a finite constant 
$C=C(n,N,\lambda,\Lambda)>0$ such that for all $r>0$ one has
\begin{align}\label{S3:L4:E00}
&\lambda\iint_{[0,r/2]\times\partial\Omega}|\nabla u_h|^{2}x_0\,dx'\,d x_0 
+\frac{2}{r}\iint_{[0,r]\times\partial \Omega} |u_h(x_0,x')|^{2}\,dx'\,dx_0 
\\[4pt]
&\hskip 0.20in
\leq\int_{\partial\Omega}|u_h(0,x')|^{2}\,dx' 
+\int_{\partial\Omega}|u_h(r,x')|^{2}\,dx'
+C\|\mu\|_{\mathcal{C}}\int_{\partial\Omega}\left[\tilde{N}^{r}_a(u_h)\right]^{2}\,dx'.
\nonumber
\end{align}
\end{lemma}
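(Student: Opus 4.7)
The plan is to derive an integration-by-parts identity by testing the weak formulation of $\mathcal{L}u=0$ against $\phi_\alpha=\psi(x_0)u_\alpha$ with a specific Lipschitz profile $\psi$ depending only on the transverse variable. Concretely, I would take
\begin{equation*}
\psi(x_0):=2x_0(1-x_0/r)\text{ on }[0,r],\qquad \psi\equiv 0\text{ for }x_0\ge r,
\end{equation*}
so that $\psi(0)=\psi(r)=0$, $\psi'(0)=2$, $\psi(x_0)\ge x_0$ on $[0,r/2]$, and (as a distribution on $[0,h]$) $\psi''=-\tfrac{4}{r}\chi_{[0,r]}+2\delta_r$. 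The shape of $\psi$ is engineered so that, from a single identity, one extracts the weighted square-function integral $\iint_{[0,r/2]}|\nabla u|^2 x_0$, the averaged interior term $\tfrac{2}{r}\iint_{[0,r]}|u|^2$, and the two boundary traces at $x_0=0$ and $x_0=r$.

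Since $\psi u$ has vanishing trace at both $x_0=0$ and $x_0=h$, it is admissible as a test function. Expanding $\partial_i(\psi u_\alpha)=\delta_{i0}\psi' u_\alpha+\psi\,\partial_i u_\alpha$ produces
\begin{equation*}
\int A_{ij}^{\alpha\beta}\partial_i u_\alpha\partial_j u_\beta\,\psi\,dx + \int A_{0j}^{\alpha\beta}\partial_j u_\beta\,u_\alpha\,\psi'\,dx + \int B_i^{\alpha\beta}\partial_i u_\beta\,u_\alpha\,\psi\,dx = 0.
\end{equation*}
The structural normalization $A_{0j}^{\alpha\beta}=\delta_{\alpha\beta}\delta_{0j}$ identifies $A_{0j}^{\alpha\beta}\partial_j u_\beta u_\alpha=\tfrac{1}{2}\partial_0(|u|^2)$, so one further integration by parts of the $\psi'$-term in $x_0$ (using $u(h,\cdot)=0$ and the distributional form of $\psi''$) yields exactly the two boundary traces $\int|u(0,\cdot)|^2\,dx'+\int|u(r,\cdot)|^2\,dx'$ together with the averaged interior contribution $-\tfrac{2}{r}\iint_{[0,r]}|u|^2\,dx$.

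The remaining steps are ellipticity and controlling the first-order term. Strong ellipticity gives $\int A\partial u\,\partial u\,\psi\ge\lambda\int|\nabla u|^2\psi$, and Young's inequality dominates $|\int B\partial u\,u\,\psi|$ by $\tfrac{\lambda}{2}\int|\nabla u|^2\psi+C_\lambda\int|B|^2|u|^2\psi$, whose first half is absorbed into the left-hand side. Since $\psi\le x_0\le\delta(x)$ and $\psi$ is supported in $\{x_0\le r\}$, Theorem~\ref{T:Car} applied with $f(x)=|B(x)|^2\delta(x)\chi_{\{x_0\le r\}}$ (whose associated majorant measure is controlled by \eqref{Car_hatAA}) bounds the remaining integrand by $C\|\mu\|_{\mathcal{C}}\int_{\partial\Omega}(\tilde N_a^r u)^2\,d\sigma$. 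Finally, the bound $\psi\ge x_0$ on $[0,r/2]$ converts $\int|\nabla u|^2\psi$ into $\iint_{[0,r/2]}|\nabla u|^2 x_0$ on the left.

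The main obstacle, really a design step rather than a technical difficulty, is arranging a single test function $\psi$ so that all three quantities on the LHS and both RHS boundary terms fall out of one identity; the quadratic profile above does this precisely because $A_{0j}^{\alpha\beta}=\delta_{\alpha\beta}\delta_{0j}$ turns the $\psi'$-piece into a total $x_0$-derivative of $|u|^2$. The absolute constants lost to Young's inequality (the factor $\tfrac{1}{2}$) are absorbed into the $\lambda$ appearing in the stated inequality, and the Carleson norm enters only through the lower-order $B$-term via Theorem~\ref{T:Car}.
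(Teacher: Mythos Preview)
Your argument is correct (up to two harmless slips: the sign in front of the $B$-term in the displayed identity should be $-$ rather than $+$, and the pointwise bound is $\psi\le 2x_0$ rather than $\psi\le x_0$; neither affects the outcome). It is, however, a genuinely different route from the paper's. The paper works with the \emph{linear} weight $x_0$ multiplied by an $x'$-cutoff $\zeta$, integrates by parts to obtain a pointwise-in-$r$ inequality whose right-hand side contains the ``bad'' boundary term $\int_{\mathbb{R}^{n-1}}\partial_0 u_\beta(r,x')\,u_\beta(r,x')\,r\,dx'$, sums over a partition of unity in $x'$ to annihilate the lateral cutoff error $IV$, and only then averages in $r$ over $[0,r']$ to convert that boundary term into $\int|u(r',\cdot)|^2$ and simultaneously manufacture the $\tfrac{2}{r}\iint|u|^2$ piece. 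Your quadratic profile $\psi(x_0)=2x_0(1-x_0/r)$ is precisely the weight that this $r$-averaging produces, so by inserting it from the start you collapse the paper's three steps (localization, partition of unity, averaging) into a single identity; the distributional relation $\psi''=-\tfrac{4}{r}\chi_{[0,r]}+2\delta_r$ together with $\psi'(0)=2$ delivers all boundary and interior terms at once, and no $x'$-cutoff is ever needed. The trade-off is that the paper's intermediate local inequality \eqref{square01} (with the cutoff $\zeta$ retained) is exactly what is recycled later to prove the local Corollary~\ref{S5:C3}, whereas your global test function does not directly yield that localized statement.
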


\begin{proof}  In the following proof we drop the dependance of $u_h$ on the parameter $h$ and just write $u=u_h$. 
Fix an arbitrary $y'\in\partial\Omega\equiv{\mathbb{R}}^{n-1}$ and consider first the case when $r\le h$. 
Pick a smooth cutoff function $\zeta$ which is $x_0-$independent and satisfies
\begin{equation}\label{cutoff-F}
\zeta= 
\begin{cases}
1 & \text{ in } B_{r}(y'), 
\\
0 & \text{ outside } B_{2r}(y').
\end{cases}
\end{equation}
Moreover, assume that $|(\nabla \zeta)(y')|\leq c/r$ for some positive constant $c$ independent of $y'$. 
We begin by considering the integral quantity 
\begin{equation}\label{A00}
\mathcal{I}:=\iint_{[0,r]\times B_{2r}(y')}A_{ij}^{\alpha\beta}\partial_{j}u_{\beta} 
\partial_{i}u_{\alpha}x_0\zeta\,dx'\,dx_0
\end{equation}
with the usual summation convention understood. In relation to this we note that the uniform 
ellipticity \eqref{EllipA} gives 
\begin{equation}\label{cutoff-AA}
\mathcal{I}\geq{\lambda}\iint_{[0,r]\times B_{2r}}\sum_{\alpha}|\nabla u_{\alpha}|^2 x_0\zeta\,dx'\,dx_0
={\lambda}\iint_{[0,r]\times B_{2r}}|\nabla u|^2 x_0\zeta\,dx'\,dx_0,
\end{equation}
where we agree henceforth to abbreviate $B_{2r}:=B_{2r}(y')$ whenever convenient. 
The idea now is to integrate by parts the formula for $\mathcal I$ in order 
to relocate the $\partial_i$ derivative. This gives 
\begin{align}\label{I+...+IV}
\mathcal{I}
&= \int_{\partial\left[(0,r)\times B_{2r}\right]} 
A_{ij}^{\alpha\beta}\partial_{j}u_{\beta}u_{\alpha}x_0\zeta\nu_{x_i}\,d\sigma 
\nonumber\\[4pt]
&\quad -\iint_{[0,r]\times B_{2r}}\partial_{i}\left(A_{ij}^{\alpha\beta} 
\partial_{j}u_{\beta}\right)u_{\alpha}x_0\zeta\,dx'\,dx_0 
\nonumber\\[4pt]
&\quad -\iint_{[0,r]\times B_{2r}}A_{ij}^{\alpha\beta}\partial_{j}u_{\beta}u_{\alpha}\partial_{i}x_0\zeta\,dx'\,dx_0 
\nonumber\\[4pt]
&\quad -\iint_{[0,r]\times B_{2r}}A_{ij}^{\alpha\beta}\partial_{j}u_{\beta}u_{\alpha}x_0\partial_{i}\zeta\,dx'\,dx_0
\nonumber\\[4pt]
&=:I+II+III+IV,
\end{align}
where $\nu$ is the outer unit normal vector to $(0,r)\times B_{2r}(y')$. 
Bearing in mind that $A_{0j}^{\alpha\beta}=0$ for $j>0$, and upon recalling that we 
are assuming $A_{00}=I_{N\times N}$, the boundary term $I$ simply becomes
\begin{equation}\label{cutoff-BBB}
I=\int_{B_{2r}}\partial_{0}u_{\beta}(r,x')u_{\beta}(r,x')\,r\,\zeta\,dx'.
\end{equation}

As $u$ is a weak solution of  $\mathcal L u=0$ in $[0,r]\times B_{2r}$, we use this PDE to transform $II$ into
\begin{equation}\label{cutoff-CCC}
II=\iint_{[0,r]\times B_{2r}}B_{i}^{\alpha\beta}(\partial_{i}u_{\beta})u_{\alpha}x_0\zeta\,dx'\,dx_0.
\end{equation}
To further estimate this term we use Cauchy-Schwarz inequality, the Carleson condition for $B$, 
and Proposition~\ref{T:Car} in order to write
\begin{align}\label{TWO-TWO}
II &\leq\left(\iint_{[0,r]\times B_{2r}}\left(B_i^{\alpha\beta}\right)^{2} 
|u_{\alpha}|^{2} x_0\zeta\,dx'\,dx_0\right)^{1/2}  
\cdot\left(\iint_{[0,r]\times B_{2r}}|\partial_{j}u_{\beta}|^{2}x_0\zeta\,dx'\,dx_0\right)^{1/2} 
\nonumber\\[4pt]
&\leq C(\lambda,\Lambda,N)\left(\|\mu\|_{\mathcal{C}}\int_{B_{2r}} 
\left[\tilde{N}^r_a(u)\right]^{2}\,dx'\right)^{1/2}\cdot\mathcal{I}^{1/2}. 
\end{align}

As $\partial_ix_0=0$ for $i>0$, the term $III$ is non-vanishing only for $i=0$. We further split this term
by considering the cases when $j=0$ and $j>0$, respectively. When $j=0$, we use that $A_{00}^{\alpha\beta}=I_{N\times N}$. 
This yields 
\begin{align}\label{u6fF}
III_{\{j=0\}} &=-\frac{1}{2}\iint_{[0,r]\times B_{2r}} 
\sum_\beta\partial_{0}\left(u_{\beta}^{2}\zeta\right)\,dx'\,dx_0 
\nonumber\\[4pt]
&=-\frac{1}{2}\int_{B_{2r}}\sum_\beta u_{\beta}(r,x')^{2}\zeta\,dx'
+\frac{1}{2}\int_{B_{2r}}\sum_\beta u_{\beta}(0,x')^{2}\zeta\,dx'.
\end{align}
Corresponding to $j>0$ we simply recall that $A_{0j}^{\alpha\beta}=0$ for $j>0$ to conclude that $III_{\{j>0\}}=0$. 

Adding up all terms produced so far we obtain
\begin{align}\label{square01}
\mathcal{I} &\leq\int_{B_{2r}}\partial_{0}u_{\beta}(r,x')u_{\beta}(r,x')\,r\,\zeta\,dx' 
\nonumber\\[4pt]
&\quad-\frac{1}{2}\int_{B_{2r}}\sum_\beta u_{\beta}(r,x')^{2}\zeta\,dx'
+\frac{1}{2}\int_{B_{2r}}\sum_\beta u_{\beta}(0,x')^{2}\zeta\,dx' 
\nonumber\\[4pt]
&\quad +C(\lambda,\Lambda,n,N)\|\mu\|_{\mathcal{C}}\int_{B_{2r}}\left[\tilde{N}^{r}_a(u)\right]^2\,dx' 
+\frac12\mathcal{I}+IV,
\end{align}
where we have used the arithmetic-geometric inequality for the expression bounding the term $II$ in \eqref{TWO-TWO}.

To obtain a global version of \eqref{square01}, the idea is to consider a sequence of disjoint boundary balls 
$(B_r(y'_k))_{k\in\mathbb N}$ such that $\bigcup_{k}B_{2r}(y'_k)$ covers $\partial\Omega={\BBR}^{n-1}$ 
and bring in a partition of unity $(\zeta_{k})_{k\in\mathbb N}$ subordinate to this cover. That is, 
assume $\sum_k \zeta_{k}=1$ on ${\BBR}^{n-1}$ and each $\zeta_{k}$ is supported in $B_{2r}(y'_k)$. 
Write $IV_k$ for each term as the last expression in \eqref{I+...+IV} corresponding to 
$B_{2r}=B_{2r}(y'_k)$. Given that $\sum_k \partial_i\zeta_{k} = 0$ for each $i$, by summing 
\eqref{square01} over all $k$'s gives $\sum_{k} IV_k= 0$. It follows that
\begin{align}\label{square02}
&\hskip -0.20in 
\frac\lambda{2}\iint_{[0,r]\times{\BBR}^{n-1}}|\nabla u|^2\,x_0\,dx'\,dx_0 
\nonumber\\[4pt]
&\hskip 0.20in
\leq\int_{{\BBR}^{n-1}}\partial_{0}u_{\beta}(r,x')u_{\beta}(r,x')\,r\,dx' 
\nonumber\\[4pt]
&\hskip 0.20in
\quad -\frac{1}{2}\int_{{\BBR}^{n-1}}\sum_\beta u_{\beta}(r,x')^{2}\,dx'
+\frac{1}{2}\int_{{\BBR}^{n-1}}\sum_\beta u_{\beta}(0,x')^{2}\,dx' 
\nonumber\\[4pt]
&\quad +C\|\mu\|_{\mathcal{C}}\int_{{\BBR}^{n-1}}\left[\tilde{N}^{r}_a(u)\right]^2\,dx'.
\end{align}
We have therefore established \eqref{square02} for $r\leq h$. There remains to 
observe that \eqref{square02} also holds for $r>h$, by virtue of the fact that $u=0$ when $r>h$.
From this, \eqref{S3:L4:E00} follows by integrating \eqref{square02} in $r$ over $[0,r']$ and then dividing by $r'$.
\end{proof}

Lemma~\ref{S3:L4} has three important corollaries. 

\begin{corollary}\label{S4:C1} 
Retain the assumptions of Lemma~\ref{S3:L4}. Then, given a weak solution $u_h$ of \eqref{E:D-strip}, 
for any $r>0$ we have
\begin{equation}\label{S3:L4:E00aa}
\lambda\iint_{[0,r/2]\times\partial\Omega}|\nabla u_h|^{2}x_0\,dx'\,d x_0 
\leq C(1+\|\mu\|_{\mathcal{C}})\int_{\partial\Omega}\left[\tilde{N}^{r}_a(u_h)\right]^{2}\,dx'.
\end{equation}
That is, $\|S^{r/2}_a(u_h)\|_{L^2(\partial \Omega)}\leq C\|\tilde{N}^r_a(u_h)\|_{L^2(\partial \Omega)}$ 
with the intervening constant depending only on $\lambda,\,\Lambda,n,N,a$, and $\|\mu\|_{\mathcal C}$. 
In particular, letting $r\to\infty$ yields a version for the global square and nontangential 
maximal functions, namely 
\begin{equation}\label{Eqqq-3}
\|S_a(u_h)\|_{L^2(\partial\Omega)}\leq C\|\tilde{N}_a(u_h)\|_{L^2(\partial \Omega)},
\end{equation}
for all energy solutions $u_h$ of \eqref{E:D-strip}.
\end{corollary}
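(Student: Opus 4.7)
The plan is to obtain \eqref{S3:L4:E00aa} directly from Lemma~\ref{S3:L4} by discarding the nonnegative second term on the LHS of \eqref{S3:L4:E00} and controlling both boundary slice integrals $\int_{\partial\Omega}|u(0,x')|^2\,dx'$ and $\int_{\partial\Omega}|u(r,x')|^2\,dx'$ by a constant multiple of $\int_{\partial\Omega}[\tilde{N}^r_a(u)]^2\,dx'$.

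For the bottom slice at $x_0=0$, I would use $u(0,x')=f(x')$ together with the nontangential convergence $u_{av}(y)\to f(Q)$ noted in the remark following Definition~\ref{D:Dirichlet}. This yields $|f(x')|\le\tilde{N}^r_a(u)(x')$ for a.e.\ $x'\in\partial\Omega$, hence $\int_{\partial\Omega}|u(0,x')|^2\,dx'\le\int_{\partial\Omega}[\tilde{N}^r_a(u)]^2\,dx'$.

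The main obstacle is the top slice $\int_{\partial\Omega}|u(r,x')|^2\,dx'$: because for elliptic systems one only has $u\in W^{1,2}_{loc}(\Omega;{\BBR}^N)$, the pointwise value at a single height $x_0=r$ cannot be dominated in any direct way by the $L^2$-averaged nontangential maximal function at one point. My plan is to resolve this by integrating the full inequality of Lemma~\ref{S3:L4}, written with a variable parameter $r'$ in place of $r$, over $r'\in[r,2r]$. Monotonicity bounds the LHS from below by $r\lambda\iint_{[0,r/2]\times\partial\Omega}|\nabla u|^2 x_0\,dx$, while the problematic slice term becomes the volume integral $\iint_{[r,2r]\times\partial\Omega}|u|^2\,dx$. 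A Fubini argument then controls this: for each $x'\in\partial\Omega$ the point $y=(3r/2,x')$ lies in $\Gamma^{2r}_a(x')$, and its averaging ball $B_{3r/4}(y)$ contains the slab $[r,2r]\times B_{r/4}(x')$, giving
\[
\iint_{[r,2r]\times B_{r/4}(x')}|u|^2\,dz \lesssim r^{n}[w(y)]^2 \lesssim r^{n}[\tilde{N}^{2r}_a(u)(x')]^2.
\]
Integrating in $x'$ and swapping the order of integration yields $\iint_{[r,2r]\times\partial\Omega}|u|^2\,dx\lesssim r\int_{\partial\Omega}[\tilde{N}^{2r}_a(u)]^2\,dx'$. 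After dividing through by $r$ and absorbing the mismatch between $\tilde{N}^{2r}_a$ and $\tilde{N}^r_a$ by a harmless rescaling $r\mapsto r/2$ (at the price of a slightly shorter interval $[0,r/4]$ on the LHS, which is then relabeled), this produces \eqref{S3:L4:E00aa}.

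Finally, \eqref{Eqqq-3} follows by sending $r\to\infty$ in \eqref{S3:L4:E00aa} and invoking monotone convergence, since $S^{r/2}_a(u)\uparrow S_a(u)$ and $\tilde{N}^r_a(u)\uparrow\tilde{N}_a(u)$ pointwise on $\partial\Omega$ as $r\to\infty$.
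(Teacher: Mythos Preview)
Your approach is correct and is essentially what the paper has in mind; the paper states this corollary without proof, treating it as an immediate consequence of Lemma~\ref{S3:L4}. The averaging-in-$r'$ trick you use to convert the problematic slice $\int_{\partial\Omega}|u(r',x')|^2\,dx'$ into the solid integral $\frac{1}{r}\iint_{[r,2r]\times\partial\Omega}|u|^2\,dx$, followed by the Fubini/ball-containment estimate, is exactly the standard device here.

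One small point deserves a cleaner statement. After your averaging you obtain
\[
\lambda\iint_{[0,r/2]\times\partial\Omega}|\nabla u|^2 x_0\,dx
\;\le\; C(1+\|\mu\|_{\mathcal C})\int_{\partial\Omega}\bigl[\tilde N_a^{2r}(u)\bigr]^2\,dx',
\]
and after the substitution $r\mapsto r/2$ this becomes an estimate for $\iint_{[0,r/4]}$ against $\tilde N_a^{r}$. Your sentence ``which is then relabeled'' glosses over the fact that this is genuinely weaker than \eqref{S3:L4:E00aa} as literally written (you cannot turn $r/4$ into $r/2$ while holding $\tilde N_a^{r}$ fixed). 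This is not a defect of your argument but rather an imprecision in the stated corollary: note that the paper itself incurs the same loss in the local version, Corollary~\ref{S5:C3}, whose final line reads $\int_{B_{2r}}[\tilde N_a^{2r}(u)]^2\,dx'$. For every use made of the result---in particular the global estimate~\eqref{Eqqq-3} obtained by letting $r\to\infty$, and the $L^p$ version in Corollary~\ref{S5:C4}---the factor-two discrepancy in truncation height is irrelevant. So your proof is complete for what is actually needed; just be explicit that you are proving the inequality with $\tilde N_a^{2r}$ (equivalently, with $S_a^{r/4}$ on the left) rather than claiming to recover the exact exponents in \eqref{S3:L4:E00aa}.
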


\begin{corollary}\label{S4:C2} 
Under the assumptions of Lemma \ref{S3:L4} for any energy solution $u_h$ of \eqref{E:D-strip} we have
\begin{align}\label{S3:L4:E00bb}
\lambda\iint_{\BBR^n_+}|\nabla u_h|^{2}x_0\,dx'\,d x_0 
&\leq\int_{\BBR^{n-1}}|u_h(0,x')|^{2}\,dx'
\nonumber\\[6pt]
&\quad+C\|\mu\|_{\mathcal{C}}\int_{\BBR^{n-1}}\left[\tilde{N}_a(u_h)\right]^{2}\,dx'.
\end{align}
\end{corollary}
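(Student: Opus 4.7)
The plan is to derive Corollary~\ref{S4:C2} as a direct consequence of Lemma~\ref{S3:L4} by sending $r \to \infty$ and exploiting the fact that $u$, being an energy solution in the strip $\Omega^h$, vanishes above height $h$.

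More precisely, I would start from the inequality \eqref{S3:L4:E00} of Lemma~\ref{S3:L4}, which for every $r>0$ gives
\begin{align*}
&\lambda\iint_{[0,r/2]\times\partial\Omega}|\nabla u|^{2}x_0\,dx'\,d x_0
+\frac{2}{r}\iint_{[0,r]\times\partial \Omega} |u(x_0,x')|^{2}\,dx'\,dx_0\\
&\qquad \leq \int_{\partial\Omega}|u(0,x')|^{2}\,dx'
+\int_{\partial\Omega}|u(r,x')|^{2}\,dx'
+C\|\mu\|_{\mathcal{C}}\int_{\partial\Omega}\bigl[\tilde{N}^{r}_a(u)\bigr]^{2}\,dx'.
\end{align*}
Now I would take $r \geq 2h$. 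By the boundary condition $u(x_0,x')=0$ for $x_0 \geq h$ in \eqref{E:D-strip}, the slice integral $\int_{\partial\Omega}|u(r,x')|^{2}\,dx'$ vanishes, dropping the second term on the right-hand side. Since $\nabla u$ is likewise supported in $\{0<x_0<h\}$, for $r\geq 2h$ the gradient integral on the left coincides with the full integral over $\BBR^n_+$:
\begin{equation*}
\iint_{[0,r/2]\times\partial\Omega}|\nabla u|^{2}x_0\,dx'\,d x_0
\;=\;\iint_{\BBR^n_+}|\nabla u|^{2}x_0\,dx'\,d x_0.
\end{equation*}

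It remains to pass the truncated nontangential maximal function to the full one. By definition $\tilde{N}^{r}_a(u) \leq \tilde{N}_a(u)$ monotonically (and in fact equals $\tilde{N}_a(u)$ for $r\geq h$ since $w$ vanishes above height $h$ as well, using the $L^2$ averaging ball $B_{\delta(x)/2}(x)$ that lies inside $\Omega^h$ when $x$ is sufficiently high). Dropping the nonnegative second term on the left-hand side and taking $r \to \infty$ yields
\begin{equation*}
\lambda\iint_{\BBR^n_+}|\nabla u|^{2}x_0\,dx'\,d x_0
\leq\int_{\BBR^{n-1}}|u(0,x')|^{2}\,dx'
+C\|\mu\|_{\mathcal{C}}\int_{\BBR^{n-1}}\bigl[\tilde{N}_a(u)\bigr]^{2}\,dx',
\end{equation*}
which is \eqref{S3:L4:E00bb}.

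Since the entire argument is just a limiting procedure inside an already-proved inequality, there is essentially no obstacle; the only subtlety is verifying that the slice terms and truncation parameters behave as expected, which is automatic thanks to the artificial boundary condition $u\equiv 0$ for $x_0\geq h$ imposed in Definition~\ref{D:DirichletStrip}.
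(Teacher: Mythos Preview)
Your argument is correct and matches the paper's implicit reasoning: the corollary is stated without proof, as an immediate consequence of Lemma~\ref{S3:L4} obtained by letting $r\to\infty$ and using that $u$ vanishes above height $h$. The only minor wording slip is the parenthetical about the averaging ball ``lying inside $\Omega^h$'' --- for large $x_0$ the ball in fact lies \emph{above} $\Omega^h$, where $u\equiv 0$, which is what makes $w$ vanish there; but this does not affect the argument.
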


\begin{corollary}\label{S5:C3} 
Under the assumptions of Lemma \ref{S3:L4}, for any energy solution $u_h$ of \eqref{E:D-strip} we have, 
for any $x'\in\mathbb R^{n-1}$ and $r>0$, 
\begin{align}\label{eq5.15}
&\iint_{[0,r/2]\times B_r}|\nabla u_h|^{2}x_0\,dx'\,dx_0  
\nonumber\\[6pt]
&\leq C\left[\int_{B_{2r}}|u_h(0,x')|^{2}\,dx' 
+\int_{B_{2r}}|u_h(r,x')|^{2}\,dx'
+\|\mu\|_{\mathcal{C}}\int_{B_{2r}}\left[\tilde{N}^r_{a}(u_h)\right]^{2}\,dx'\right]
\nonumber\\[6pt]
&\leq C(2+\|\mu\|_{\mathcal{C}})\int_{B_{2r}}\left[\tilde{N}^{2r}_{a}(u_h)\right]^{2}\,dx'.
\end{align}
\end{corollary}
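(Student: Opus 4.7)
The plan is to mimic the proof of Lemma~\ref{S3:L4} but, crucially, to keep a \emph{single} localized cutoff instead of summing against a partition of unity. The main new issue will be that the term $IV$ in \eqref{I+...+IV}, which vanished in the global argument because $\sum_k\partial_i\zeta_k=0$, now has to be estimated directly. I would choose the $x_0$-independent cutoff in the form $\zeta=\eta^{2}$, where $\eta$ equals $1$ on $B_r(y')$, vanishes outside $B_{2r}(y')$ and satisfies $|\nabla\eta|\lesssim 1/r$. With this choice $|\nabla\zeta|^{2}/\zeta \lesssim |\nabla\eta|^{2}\lesssim 1/r^{2}$, which is exactly what is needed to run a weighted Cauchy-Schwarz argument.

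First I would redo the integration by parts \eqref{A00}--\eqref{square01} verbatim with this single $\zeta$, keeping in mind $A_{00}=I_{N\times N}$ and $A_{0j}=0$ for $j>0$. This gives
\[
\mathcal{I}\le I+II+III+IV,
\]
where $I$ and $III$ together produce the boundary terms $\int_{B_{2r}}|u(0,x')|^{2}\,dx'$ and $\int_{B_{2r}}|u(r,x')|^{2}\,dx'$ (plus the mixed term $\int_{B_{2r}}\partial_0u_\beta(r,x')u_\beta(r,x')r\zeta\,dx'$ which, after integrating in $r$ and dividing by $r'$ as at the end of the proof of Lemma \ref{S3:L4}, telescopes against the integrated $|u|^2$), $II$ is estimated via Theorem~\ref{T:Car} by $C\|\mu\|_{\mathcal C}\int_{B_{2r}}[\tilde N_a^{r}(u)]^{2}\,dx'$ after absorbing a small multiple of $\mathcal{I}^{1/2}$, and $IV$ is the new error we must control.

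For $IV$ I would apply the weighted Cauchy-Schwarz inequality, using $|\nabla\zeta|=2\eta|\nabla\eta|$, to write
\[
|IV|\le \varepsilon\iint_{[0,r]\times B_{2r}}|\nabla u|^{2}x_0\,\eta^{2}\,dx
+C_\varepsilon\iint_{[0,r]\times B_{2r}}|u|^{2}\,x_0\,|\nabla\eta|^{2}\,dx.
\]
The first term is a fraction of $\mathcal{I}$ and is absorbed into the left-hand side. The second is bounded, using $|\nabla\eta|\lesssim 1/r$ and $x_0\le r$, by $C r^{-1}\iint_{[0,r]\times B_{2r}}|u|^{2}\,dx$, which a standard Fubini/Whitney argument (using that for $x$ in this slab the boundary ball $\{Q:x\in\Gamma_a(Q)\}$ has $\sigma$-measure $\approx x_0^{n-1}$, together with $|u(x)|^{2}\lesssim w^{2}$ after an additional averaging) converts into $C\int_{B_{2r}}[\tilde N_a^{r}(u)]^{2}\,dx'$, and is therefore of the Carleson type on the right-hand side. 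Combining the estimates for $I$, $II$, $III$, $IV$, integrating in $r$ over $[0,r']$ and dividing by $r'$ exactly as in the proof of Lemma \ref{S3:L4} yields the first inequality in \eqref{eq5.15}.

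For the second inequality, I would reduce each of the boundary integrals to $\tilde N_a^{2r}(u)$. The term $\int_{B_{2r}}|u(0,x')|^{2}\,dx'=\int_{B_{2r}}|f|^{2}\,dx'$ is dominated by $\int_{B_{2r}}[\tilde N_a^{2r}(u)]^{2}\,dx'$ via the remark following Definition~\ref{D:Dirichlet} (since $|u_{\mathrm{av}}|\le w$ by Cauchy-Schwarz, so $|f(Q)|\le\tilde N_a(u)(Q)$ a.e.). For the slice term $\int_{B_{2r}}|u(r,x')|^{2}\,dx'$ I would use an interior $L^2$-trace estimate on the hyperplane $\{x_0=r\}$: for a weak solution,
\[
\int_{B_{2r}}|u(r,x')|^{2}\,dx'\lesssim \tfrac{1}{r}\iint_{B_{2r}\times[r/2,3r/2]}|u|^{2}\,dx
+r\iint_{B_{2r}\times[r/2,3r/2]}|\nabla u|^{2}\,dx,
\]
and then the Cacciopoli inequality (Proposition~\ref{caccio}) eliminates the gradient term in favour of another $L^{2}$ integral of $u$, which is in turn controlled by $\int_{B_{2r}}[\tilde N_a^{2r}(u)]^{2}\,dx'$ (since for $x$ in a slightly dilated slab $(r, x')\in\Gamma_a^{2r}(x')$). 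This step is where I expect the main technical obstacle: $u(r,\cdot)$ is only a trace of a $W^{1,2}$ function (no pointwise values for systems) and $\tilde N_a$ is defined through $L^2$ averages, so the trace estimate must be executed carefully together with the geometry of the cones in order not to enlarge the ball $B_{2r}$ on the right-hand side of \eqref{eq5.15}.
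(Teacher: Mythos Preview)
Your proposal is correct and essentially identical to the paper's argument: the paper also takes $\zeta=\eta^2$ with $|\nabla_T\eta|\lesssim 1/r$, bounds $IV$ via Cauchy--Schwarz by $\mathcal I^{1/2}\bigl(\tfrac{1}{r}\iint_{[0,r]\times B_{2r}}|u|^2\bigr)^{1/2}\le \varepsilon\mathcal I+C_\varepsilon\int_{B_{2r}}[\tilde N^r_a(u)]^2$, absorbs $\varepsilon\mathcal I$, and then integrates in $r$ and divides by $r'$ exactly as in Lemma~\ref{S3:L4}. The paper does not spell out the second inequality in \eqref{eq5.15}; your extra care about the slice term $\int_{B_{2r}}|u(r,\cdot)|^2$ is warranted, and your trace-plus-Caccioppoli argument works (the paper later handles the identical issue, in the proof of Lemma~\ref{S6:L1}, simply by averaging in the height variable so that the slice becomes a solid integral).
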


This is a local version of the Corollary \ref{S4:C2}. Its justification may be carried out exactly as in 
the proof above, up to \eqref{square01}. Then, instead of summing over different balls $B_r$ covering $\mathbb R^{n-1}$, 
we now estimate the terms in $IV$. Both of these terms are of the same type and each may be bounded (up to a multiplicative 
constant) by
\begin{equation}\label{eq5.16}
\iint_{[0,r]\times B_{2r}}|\nabla u||u|x_0|\partial_T\zeta|dx'dx_0,
\end{equation}
where $\partial_T\zeta$ denotes any of the derivatives in the direction parallel 
to the boundary and $u=u_h$ as before. Recall that $\zeta$ is a smooth cutoff function equal to $1$ 
on $B_r$ and $0$ outside $B_{2r}$. In particular, we may assume $\zeta$ to be of the form $\zeta=\eta^2$ 
for some smooth function $\eta$ such that $|\nabla_T\eta|\le C/r$. By Cauchy-Schwarz's inequality, the term 
\eqref{eq5.16} may be further majorized by
\begin{align}\label{eq5.17}
\Bigg(\iint_{[0,r]\times B_{2r}}& |\nabla u|^2x_0(\eta)^2dx'dx_0\Bigg)^{1/2}\cdot
\left(\iint_{[0,r]\times B_{2r}}|u|^{2}x_0|\nabla_T\eta|^2dx'dx_0\right)^{1/2}
\nonumber\\[6pt]
&\lesssim{\mathcal I}^{1/2}\left(\frac1r\iint_{[0,r]\times B_{2r}}|u|^2dx'dx_0\right)^{1/2}
\nonumber\\[6pt]
&\le \varepsilon{\mathcal I}+C_\varepsilon\int_{B_{2r}}\left[\tilde{N}^r_{a}(u)\right]^{2}\,dx'.
\end{align}
In the last step we have used arithmetic-geometric inequality and a trivial estimate of the solid integral $|u|^2$ by the 
averaged nontangential maximal function. Substituting \eqref{eq5.17} into \eqref{square01} the estimate 
\eqref{eq5.15} follows by integrating in $r$ over $[0,r']$ and dividing by $r'$ exactly as done above.

\begin{corollary}\label{S5:C4} 
Retain the assumptions of Lemma \ref{S3:L4}, and fix $p>0$ and $a>0$.
Then there exists a finite constant $C=C(n,N,\lambda,\Lambda,p,a,\|\mu\|_{\mathcal C})>0$ such that 
\begin{equation}\label{S3:C7:E00ooXX}
\|S_a(u_h)\|_{L^{p}({\BBR}^{n-1})}\le C\|\tilde{N}_a(u_h)\|_{L^{p}({\BBR}^{n-1})}
\end{equation}
for any energy solution $u_h$ of \eqref{E:D-strip}.
\end{corollary}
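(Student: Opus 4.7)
The plan is to upgrade the $L^2$ estimate of Corollary~\ref{S4:C1} to all $L^p$, $p>0$, via a good-$\lambda$ inequality in the style of Dahlberg. Specifically, I aim to show that for appropriate apertures $a<a'<a''$ (differing by bounded multiplicative factors depending only on $L,n,a$) and a small parameter $\gamma>0$ to be chosen,
\begin{equation}\label{PropGL}
\sigma\bigl(\{S_{a}(u)>2\lambda,\ \tilde{N}_{a'}(u)\le\gamma\lambda\}\bigr)\le C\gamma^{2}\,\sigma\bigl(\{S_{a''}(u)>\lambda\}\bigr)\quad\text{for every }\lambda>0,
\end{equation}
with $C$ depending only on $n,N,\lambda,\Lambda,a$ and $\|\mu\|_{\mathcal C}$. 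Given \eqref{PropGL}, integration against $p\lambda^{p-1}d\lambda$, the layer-cake identity, and standard change-of-aperture estimates $\|S_{a''}(u)\|_{L^p}\lesssim\|S_a(u)\|_{L^p}$ and $\|\tilde{N}_{a'}(u)\|_{L^p}\lesssim\|\tilde{N}_a(u)\|_{L^p}$ yield
$$2^{-p}\|S_a(u)\|_{L^p}^p\le C\gamma^{2}\|S_a(u)\|_{L^p}^p+C\gamma^{-p}\|\tilde{N}_a(u)\|_{L^p}^p;$$
choosing $\gamma$ small enough to absorb the first term on the right gives the claim, once we know $\|S_a(u)\|_{L^p}$ is a priori finite. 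The latter is obtained by first truncating the square function at a fixed height $R$, running the whole argument with $S_a$ replaced by $S_a^R$ (the constants are independent of $R$), and then letting $R\to\infty$ via monotone convergence.

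To prove \eqref{PropGL}, fix $\lambda>0$ and perform a Whitney decomposition of the open set $E_\lambda:=\{S_{a''}(u)>\lambda\}\subset\partial\Omega$ into boundary balls $\Delta_j=\Delta(Q_j,r_j)$ with $r_j\approx\operatorname{dist}(Q_j,E_\lambda^c)$. It suffices to bound $\sigma(G_j)$ for $G_j:=\Delta_j\cap\{S_a(u)>2\lambda,\,\tilde{N}_{a'}(u)\le\gamma\lambda\}$ by $C\gamma^2\sigma(\Delta_j)$ and sum. For $Q\in G_j$ split $\Gamma_a(Q)$ at the height $h_j=C_0r_j$; by the Whitney geometry, any point of $\Gamma_a(Q)\setminus\Gamma_a^{h_j}(Q)$ lies in $\Gamma_{a''}(\widetilde Q)$ for some $\widetilde Q\in E_\lambda^c$ near $Q_j$, so the tail contribution to $S_a(u)(Q)^2$ is bounded by $S_{a''}(u)(\widetilde Q)^2\le\lambda^{2}$. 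Consequently $(S_a^{h_j}(u)(Q))^{2}\ge 3\lambda^{2}$ on $G_j$, and Chebyshev together with Fubini yield
$$3\lambda^{2}\sigma(G_j)\le\int_{G_j}\bigl(S_a^{h_j}(u)\bigr)^{2}d\sigma\lesssim\iint_{[0,h_j]\times C_1\Delta_j}|\nabla u|^{2}\delta\,dx\lesssim\int_{C_2\Delta_j}\bigl[\tilde{N}_a^{C_3h_j}(u)\bigr]^{2}d\sigma,$$
the last step being the local square-function estimate of Corollary~\ref{S5:C3}.

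The key observation closing the argument is that if $G_j\neq\varnothing$, then $\Delta_j$ contains at least one $\widehat Q$ with $\tilde{N}_{a'}(u)(\widehat Q)\le\gamma\lambda$. By choosing $a'$ sufficiently large relative to $a$ (depending on $C_2,C_3,L$), every truncated cone $\Gamma_a^{C_3h_j}(Q)$ with $Q\in C_2\Delta_j$ is contained in $\Gamma_{a'}(\widehat Q)$; hence the averages $w(x)$ appearing in the definition of $\tilde{N}_a^{C_3h_j}(u)$ at any such $Q$ are uniformly bounded by $\tilde{N}_{a'}(u)(\widehat Q)\le\gamma\lambda$. Therefore $\int_{C_2\Delta_j}[\tilde{N}_a^{C_3h_j}(u)]^{2}d\sigma\le C\gamma^{2}\lambda^{2}\sigma(\Delta_j)$, giving $\sigma(G_j)\lesssim\gamma^{2}\sigma(\Delta_j)$, as required.

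The principal obstacle is exactly this cone-covering step: one must align three distinct apertures (for $S_a$, for $\tilde N_{a'}$, and for the Whitney-tail inclusion into $\Gamma_{a''}$) so that the geometric inclusions and the change-of-aperture $L^p$ bounds all hold simultaneously, with constants independent of $\lambda$. Related technical care is needed to justify the a priori finiteness of $\|S_a(u)\|_{L^p}$ so that absorption is legal; this is handled, as noted, by working first with a height-truncated square function and passing to the limit at the end.
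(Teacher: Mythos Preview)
Your overall strategy---a good-$\lambda$ inequality built on the local $L^2$ estimate of Corollary~\ref{S5:C3}---is precisely the Fefferman--Stein argument the paper invokes, so the route is the right one. However, there is a genuine gap in the execution.

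The claimed inclusion $\Gamma_a^{C_3h_j}(Q)\subset\Gamma_{a'}(\widehat Q)$ for all $Q\in C_2\Delta_j$ is \emph{false}, no matter how large you choose $a'$. A point $x\in\Gamma_a^{C_3h_j}(Q)$ near the vertex $Q$ (i.e.\ with $x_0$ small) satisfies $|x'-\widehat Q'|\approx|Q'-\widehat Q'|\approx r_j$, while membership in $\Gamma_{a'}(\widehat Q)$ demands $|x'-\widehat Q'|<a'x_0$; this fails whenever $x_0<cr_j$ with $c=c(a',C_2)>0$. The truncated cone $\Gamma_a^{C_3h_j}(Q)$ is truncated at the \emph{top}, not the bottom, so it reaches all the way down to $Q$. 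Consequently you cannot conclude $\tilde N_a^{C_3h_j}(u)(Q)\le\gamma\lambda$ pointwise on $C_2\Delta_j$: the supremum defining $\tilde N_a^{C_3h_j}(u)(Q)$ picks up the boundary behaviour of $w$ near $Q$, which the single point $\widehat Q$ cannot control. In particular the integral bound $\int_{C_2\Delta_j}[\tilde N_a^{C_3h_j}(u)]^{2}\,d\sigma\le C\gamma^{2}\lambda^{2}\sigma(\Delta_j)$ does not follow from your hypothesis.

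The standard remedy is to replace the condition $\tilde N_{a'}(u)\le\gamma\lambda$ in the definition of the good-$\lambda$ set by $\bigl(M(\tilde N_a(u)^{2})\bigr)^{1/2}\le\gamma\lambda$, where $M$ is the Hardy--Littlewood maximal operator on $\mathbb{R}^{n-1}$. Then the existence of a single $\widehat Q\in G_j$ yields $\int_{C_2\Delta_j}[\tilde N_a(u)]^{2}\,d\sigma\le C\gamma^{2}\lambda^{2}\sigma(\Delta_j)$ directly, and since $\tilde N_a^{C_3h_j}(u)\le\tilde N_a(u)$ pointwise your chain closes. After integrating, this proves \eqref{S3:C7:E00ooXX} for $p>2$ (where $\|(M(\tilde N^2))^{1/2}\|_{L^p}\lesssim\|\tilde N\|_{L^p}$ by the $L^{p/2}$-boundedness of $M$). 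The range $0<p\le 2$ requires a genuinely different argument---either the tent-space formulation of Fefferman--Stein, or a separate stopping-time/weak-type analysis---since $M$ is not bounded on $L^{p/2}$ there.
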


This is a consequence of Corollary~\ref{S5:C3}, following \cite{FSt}
(see a more detailed discussion in the proof of Proposition~\ref{S3:C7}).

\section{Bounds for the nontangential maximal function by the square function}
\label{SS:43}

As before, we shall work under the assumption that $\Omega=\BBR^n_+$. We will only assume the 
Legendre-Hadamard condition \eqref{EllipLH} and merely impose large Carleson conditions on the coefficients. 
(Starting with a graph Lipschitz domain, we may always reduce matters to this setting via the pull-back map $\rho$).

Our aim in this section is to establish a reverse version of the inequality in Corollary~\ref{S4:C1}. 
The approach necessarily differs from the usual argument in the scalar elliptic case due to the fact 
that certain estimates, such as interior H\"older regularity of a weak solution, are unavailable for 
the class of systems presently considered. Hence, alternative arguments bypassing such difficulties 
must be devised. 

The major innovation is the use of an entire family of Lipschitz graphs on which the nontangential 
maximal function is large in lieu of a single graph constructed via a stopping time argument. 
This is necessary as we are using $L^2$ averages of solutions to define the nontangential maximal 
function and hence the knowledge of certain bounds for a solution on a single graph provides no 
information about the $L^2$ averages over interior balls.

The energy solutions $u_h$ constructed using Lax-Milgram lemma on $\Omega_h$ and extended by zero on 
$\{(x_0,x'):\, x_0>h\}$ a priori belong to the space $\dot{W}^{1,2}(\mathbb R^n_+;\mathbb R^N)$.
Since $u_h(h,\cdot)=0$, this implies $u_h\in W^{1,2}(\mathbb R^n_+;\mathbb R^N)$ (with norm depending of $h$). 
We drop dependence on $h$ for now and simply write $u=u_h$. For the function $w$ defined in $\Omega$ as 
in \eqref{w}, and a constant $\nu>0$, define the set
\begin{equation}\label{E}
E_{\nu,a}:=\big\{x'\in\partial\Omega:\,N_{a}(w)(x')>\nu\big\}
\end{equation}
where, as usual, $a>0$ is a fixed background parameter. Also, 
consider the map $\hbar:\partial\Omega\to\BBR$ given at each $x'\in\partial\Omega$ by 
\begin{equation}\label{h}
\hbar_{\nu,a}(w)(x'):=\inf\left\{x_0>0:\,\sup_{z\in\Gamma_{a}(x_0,x')}w(z)<\nu\right\}
\end{equation}
with the convention that $\inf\varnothing=\infty$. 
We remark that $\hbar$ differs from the function $\tilde{\hbar}:\partial\Omega\to\BBR$ defined 
at each $x'\in\partial\Omega$ as
\begin{equation}\label{Eqqq-4}
\tilde{\hbar}_{\nu, a}(w)(x'):=\sup\left\{x_0 >0:\,\sup_{z\in\Gamma_{a}(x_0,x')}w(z)>\nu\right\}.
\end{equation}
The function $\tilde{\hbar}$ has been used in arguments for scalar equations 
(cf. \cite[pp.\,212]{KP01} and \cite{KKPT}). While there are clear similarities in the manner 
in which the functions $\hbar$ and $\tilde{\hbar}$ are defined, throughout this paper we prefer to use $\hbar$ 
as it works better for elliptic systems. 

At this point we observer that $\hbar_{\nu,a}(w,x')<\infty$ for all points $x'\in\partial\Omega$. 
This is due to the fact that the function $u$ vanishes above height $h$, hence the averages $w$ 
vanish above the height $2h$. It follows that $\hbar_{\nu,a}(w)(x')<\infty$ and, in fact, $\hbar_{\nu,a}(w)(x')<2h$.

\begin{lemma}\label{S3:L5}
Let $u$ be an energy solution of \eqref{E:D-strip}, and associated with it the function $w$ as in \eqref{w}. 
Also, fix two positive numbers $\nu,a$. Then the following properties hold.
\vglue2mm

\noindent (i)
The function $\hbar_{\nu, a}(w)$ is Lipschitz, with a Lipschitz constant $1/a$. That is,
\begin{equation}\label{Eqqq-5}
\left|\hbar_{\nu,a}(w)(x')-\hbar_{\nu,a}(w)(y')\right|\leq a^{-1}|x'-y'|
\end{equation}
for all $x',y'\in\partial\Omega$.

\vglue2mm

\noindent (ii)
Given an arbitrary $x'\in E_{\nu,a}$, let $x_0:=\hbar_{\nu,a}(w)(x')$. Then there exists a 
point $y=(y_0,y')\in\partial\Gamma_{a}(x_0,x')$ such that $w(y)=\nu$ and $\hbar_{\nu,a}(w)(y')=y_0$. 		
\end{lemma}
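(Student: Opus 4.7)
The plan is to establish parts (i) and (ii) separately, both via nested-cone containments derived from the triangle inequality, together with the continuity of $w$ (as an $L^2$-average of $u$) and the fact that $u$—and hence $w$—vanishes above a fixed height, so that the relevant suprema are attained on effectively compact sets.

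For part (i), I will first verify the geometric inclusion
\[
\Gamma_{a}\!\left(x_0+\tfrac{|x'-y'|}{a},\,x'\right)\subset\Gamma_{a}(x_0,y'),
\]
which follows immediately from $|y'-z'|\le|x'-y'|+|x'-z'|$ and the defining inequality for the left-hand cone. Setting $g_{x'}(t):=\sup_{z\in\Gamma_{a}(t,x')}w(z)$, this containment yields the monotonicity $g_{x'}\bigl(t+|x'-y'|/a\bigr)\le g_{y'}(t)$, so any $t$ admissible in the infimum defining $\hbar_{\nu,a}(w)(y')$ produces an admissible $t+|x'-y'|/a$ for $\hbar_{\nu,a}(w)(x')$. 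Passing to infima gives $\hbar_{\nu,a}(w)(x')\le\hbar_{\nu,a}(w)(y')+|x'-y'|/a$, and swapping the roles of $x'$ and $y'$ completes (i).

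For part (ii), set $x_0:=\hbar_{\nu,a}(w)(x')$ and $g(t):=g_{x'}(t)$, which is non-increasing, $\ge\nu$ for $t<x_0$, and $<\nu$ for $t>x_0$. The strategy is first to show $g$ is continuous: right-continuity from $\Gamma_{a}(x_0,x')=\bigcup_{t>x_0}\Gamma_{a}(t,x')$, and left-continuity from $\bigcap_{t<x_0}\overline{\Gamma_{a}(t,x')}=\overline{\Gamma_{a}(x_0,x')}$ together with continuity of $w$ and pre-compactness of the effective domain. Consequently $g(x_0)=\nu$, and the supremum is attained at some $y=(y_0,y')\in\overline{\Gamma_{a}(x_0,x')}$ with $w(y)=\nu$. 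If $y$ lay in the open cone $\Gamma_{a}(x_0,x')$, then by openness $y$ would still belong to $\Gamma_{a}(x_0',x')$ for $x_0'$ slightly exceeding $x_0$, forcing $g(x_0')\ge\nu$ and contradicting the defining property of $x_0$. Hence $y\in\partial\Gamma_{a}(x_0,x')$, i.e.\ $a(y_0-x_0)=|x'-y'|$. To obtain $\hbar_{\nu,a}(w)(y')=y_0$, this boundary identity together with the triangle inequality gives $\Gamma_{a}(y_0,y')\subset\Gamma_{a}(x_0,x')$ and, strictly, $\overline{\Gamma_{a}(y_0',y')}\subset\Gamma_{a}(x_0,x')$ for every $y_0'>y_0$. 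Since $y$ is the apex of $\Gamma_{a}(y_0,y')$, we have $g_{y'}(y_0)\ge w(y)=\nu$; the first inclusion gives $g_{y'}(y_0)\le g(x_0)=\nu$, so $g_{y'}(y_0)=\nu$ and hence $\hbar_{\nu,a}(w)(y')\ge y_0$. Conversely, if $g_{y'}(y_0')\ge\nu$ for some $y_0'>y_0$, its attained maximum point $z$ would live in the \emph{open} cone $\Gamma_{a}(x_0,x')$ with $w(z)=\nu$, and the same slight-shift argument once more yields a contradiction; thus $g_{y'}(y_0')<\nu$ for all $y_0'>y_0$, giving $\hbar_{\nu,a}(w)(y')\le y_0$.

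The main technical obstacle is the continuity of $g$ and the attainment of its supremum, since these require both continuity of $w$ and boundedness of the effective domain where $w\neq 0$. This is precisely where the strip construction of Section~\ref{S3}, which ensures $u$ (and hence $w$) vanishes above height $\sim h$, becomes essential; the rest of the argument is careful bookkeeping of open-versus-closed cone containments.
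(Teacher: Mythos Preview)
Your proof is correct and follows essentially the same approach as the paper's: cone containments from the triangle inequality for part (i), and continuity of $w$ together with effective compactness (from vanishing above height $h$) for part (ii). The only differences are presentational---you argue part (i) directly rather than by contradiction, and in part (ii) you organize the argument through continuity of $g_{x'}(t)=\sup_{\Gamma_a(t,x')}w$ rather than a covering argument on $\partial\Gamma_a(x_0,x')$; one small imprecision is that the apex $y=(y_0,y')$ does not lie in the \emph{open} cone $\Gamma_a(y_0,y')$, so the inequality $g_{y'}(y_0)\ge w(y)$ needs a one-line continuity justification.
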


\begin{proof} 
To prove the claim formulated in part {\it (i)}, pick a pair of arbitrary points 
$x',y'\in\partial\Omega$ and set $y_0:=\hbar_{\nu,a}(w)(y')$, $x_0:=\hbar_{\nu,a}(w)(x')$. 
Without loss of generality it may be assumed that $y_0<x_0$. In particular, this 
forces $x_0\in(0,\infty)$. Seeking a contradiction, suppose
\begin{equation}\label{contr}
|x'-y'|<a(x_0-y_0).
\end{equation}
Then simple geometric considerations give
\begin{equation}\label{Eqqq-6}
\overline{\Gamma_{a}(x_0, x')}\subset \Gamma_{a}(y_0, y').
\end{equation}
In particular, there exists $\varepsilon\in(0,2x_0)$ with the property that
\begin{equation}\label{Eqqq-7}
\overline{\Gamma_{a}(x_0, x')}\subset \Gamma_{a}(y_0+\varepsilon, y').
\end{equation}
Hence,
\begin{equation}\label{Eqqq-8}
\overline{\Gamma_{a}(x_0-\varepsilon/2,x')}\subset\Gamma_{a}(y_0+\varepsilon/2,y').
\end{equation}
It follows that 
\begin{equation}\label{Eqqq-9}
\sup_{\Gamma_{a}(x_0-\varepsilon/2,x')}w\leq\sup_{\Gamma_{a}(y_0+\varepsilon/2,y')}w<\nu,
\end{equation}
the last inequality being true by the definition of $y_0=\hbar_{\nu,a}(w)(y')$ in \eqref{h}. 
This however implies that
\begin{equation}\label{Eqqq-10}
x_0-\varepsilon/2\geq \hbar_{\nu,a}(x')=x_0,
\end{equation}
which is the desired contradiction. Therefore the assumption made in \eqref{contr} is false 
which then entails $0<a(x_0-y_0)\leq|x'-y'|$. From this the claim in part {\it (i)} follows.

To justify the claim recorded in part {\it (ii)}, fix some $x'\in E_{\nu,a}$ and note that this implies
$x_0=\hbar_{\nu,a}(w)(x')>0$. To show that there exists $y=(y_0,y')\in\partial\Gamma_{a}(x_0,x')$ such that 
$w(y)=\nu$ we employ a compactness argument. Due to the decay of $w$ at infinity it follows that for 
a sufficiently large $r$ (depending on $u$) we have 
\begin{equation}\label{Eqqq-11}
\sup_{\{z\in{\mathbb{R}}^n_{+}:\,z_0\geq r\}}w(z)\leq\nu/2.
\end{equation}
If it were true that $w(z)<\nu$ for all points $z\in\partial\Gamma_{a}(x_0,x')\cap\{z_0\leq r\}$ then, 
as the function $w$ is continuous, each such point $z$ would posses a neighborhood ${\mathcal{O}}_z$ where $w<\nu$.
The family $\big\{{\mathcal{O}}_z\big\}_z$ then constitutes an open cover of the compact set 
$\partial\Gamma_{a}(x_0,x')\cap\{z_0\leq r\}$ and may therefore be refined to a finite sub-cover, say  
$\big\{{\mathcal{O}}_{z_i}\big\}_{1\leq i\leq k}$. Upon introducing 
\begin{equation}\label{Eqqq-12}
\mathcal{S}:=\Big(\bigcup_{i=1}^k{\mathcal{O}}_{z_i}\Big)\cup\Gamma_a(x_0,x')\cup\{(z_0,z'):\,z_0\geq r\}
\end{equation}
it follows that 
\begin{equation}\label{Eqqq-13}
w(z)<\nu,\qquad\forall\,z\in{\mathcal{S}}.
\end{equation}
However, for some small $\varepsilon\in(0,x_0)$
\begin{equation}\label{Eqqq-14}
\overline{\Gamma_a(x_0-\varepsilon,x')}\subset\mathcal S,
\end{equation}
and the compactness of the set $\mathcal{S}\cap\{z_0\leq r\}$ together with decay of $w$ above height $r$
entail 
\begin{equation}\label{Eqqq-15}
\sup_{z\in\Gamma_a(x_0-\varepsilon,x')}w(z)<\nu.
\end{equation}
This contradicts the definition of $x_0=\hbar_{\nu,a}(w)(x')$ in \eqref{h}. Bearing in mind the definition 
of $\hbar_{\nu,a}(w)(x')$ and the continuity of $w$, we conclude that for some point  
$y=(y_0,y')\in\partial\Gamma_{a}(x_0,x')$ we have $w(y)=\nu$. In turn, this forces
$\hbar_{\nu,a}(w)(y')\geq y_0$. On the other hand, since $\Gamma_a(y_0,y')\subseteq\Gamma_a(x_0,x')$, we have 
\begin{equation}\label{Eqqq-16}
\Gamma_a(y_0+\varepsilon,y')\subset\Gamma_a(x_0+\varepsilon,x')\,\,\text{ for every }\,\,\varepsilon>0
\end{equation}
which implies
\begin{equation}\label{Eqqq-16a}
\sup_{\Gamma_a(y_0+\varepsilon,y')}w\leq\sup_{\Gamma_a(x_0+\varepsilon,x')}w<\nu
\,\,\text{ for every }\,\,\varepsilon>0.
\end{equation}
In turn, this allows us to conclude that $\hbar_{\nu, a}(w)(y')\leq y_0+\varepsilon$ 
for every $\varepsilon>0$. Hence, ultimately it follows that $\hbar_{\nu,a}(w)(y')=y_0$, as claimed.
\end{proof}

\begin{lemma}\label{l6} 
Assume as before that $u$ is an energy solution of the system \eqref{E:D-strip} in $\Omega=\BBR^n_+$. 
Then for any $a>0$ there exists $b=b(a)>a$ and $\gamma=\gamma(a)>0$ such that the following holds. 
Having fixed an arbitrary $\nu>0$, for each point $x'$ from the set 
\begin{equation}\label{Eqqq-17}
\big\{x':\,N_{a}(w)(x')>\nu\mbox{ and }S_{b}(u)(x')\leq\gamma\nu\big\}
\end{equation}
there exists a boundary ball $R$ with $x'\in 2R$ and such that
\begin{equation}\label{Eqqq-18}
\big|w\big(\hbar_{\nu,a}(w)(z'),z'\big)\big|>\nu/{2}\,\,\text{ for all }\,\,z'\in R.
\end{equation}
\end{lemma}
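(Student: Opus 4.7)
The plan is to isolate, via Lemma~\ref{S3:L5}(ii), a single interior point $y$ where $w$ attains the value $\nu$, and then use the square function bound together with a Lipschitz estimate for the $L^2$-average $w$ to show that $w$ cannot drop below $\nu/2$ at graph points nearby. Concretely, set $x_0:=\hbar_{\nu,a}(w)(x')>0$ and let $y=(y_0,y')\in\partial\Gamma_a(x_0,x')$ be the point produced by Lemma~\ref{S3:L5}(ii), so that $w(y)=\nu$, $\hbar_{\nu,a}(w)(y')=y_0$, and the cone geometry forces $|y'-x'|=a(y_0-x_0)\leq ay_0$. I take $R:=B(y',ay_0/2)$; then $|x'-y'|\leq ay_0$ puts $x'\in 2R$, and the Lipschitz bound of Lemma~\ref{S3:L5}(i) yields $\hbar_{\nu,a}(w)(z')\in[y_0/2,\,3y_0/2]$ for every $z'\in R$.

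Next, I choose $b=b(a)>a$ so that the thickened neighborhood
\begin{equation*}
U:=\bigl\{(\eta_0,\eta')\,:\,|\eta'-y'|\leq (a+2)y_0,\ \eta_0\in[y_0/4,\,9y_0/4]\bigr\}
\end{equation*}
is contained in $\Gamma_b(x')$: a direct computation using $|\eta'-x'|\leq(2a+2)y_0$ and $\eta_0\geq y_0/4$ gives that any $b\gtrsim a+1$ suffices. Since $\delta(\eta)\approx y_0$ on $U$, the hypothesis $S_b(u)(x')\leq\gamma\nu$ rewrites as
\begin{equation*}
\int_U|\nabla u|^2\,d\eta\,\lesssim\,y_0^{\,n-2}\gamma^2\nu^2.
\end{equation*}

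The central step is a Lipschitz bound for $w$ inside $U$. Starting from $w^2(p)=\fint_{B(p,p_0/2)}|u|^2$ and passing to the rescaled variable $z=(y-p)/(p_0/2)$, differentiation under the integral followed by Cauchy--Schwarz (cancelling the factor of $w(p)$ that appears) yields
\begin{equation*}
|\nabla w(p)|\,\lesssim\,\Bigl(\fint_{B(p,p_0/2)}|\nabla u|^2\Bigr)^{1/2}\,\lesssim\,y_0^{-1}\gamma\nu
\end{equation*}
whenever $B(p,p_0/2)\subset U$, which holds for every $p$ on the linear segment joining $y$ to a graph point $(\hbar_{\nu,a}(w)(z'),z')$ with $z'\in R$ by the choice of $U$. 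Integrating $|\nabla w|$ along this segment, whose length is at most $C(a)y_0$, produces $|w(\hbar_{\nu,a}(w)(z'),z')-w(y)|\leq C(a)\gamma\nu$. Choosing $\gamma=\gamma(a)$ small enough that $C(a)\gamma<1/2$ gives $w(\hbar_{\nu,a}(w)(z'),z')>\nu-\nu/2=\nu/2$ for every $z'\in R$, which is the required conclusion.

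The main obstacle is the absence of interior H\"older regularity for weak solutions of the system, which in the scalar setting would let one control the oscillation of $u$ along the stopping-time graph directly. Here this is replaced by the Lipschitz estimate on the $L^2$-averaged function $w$, whose local constant is controlled by a local $L^2$-average of $|\nabla u|$ and hence by the square function thanks to the containment $B(p,p_0/2)\subset\Gamma_b(x')$. A secondary subtlety is the fact that the Lipschitz constant $1/a$ of $\hbar_{\nu,a}(w)$ exactly matches the lateral aperture of $\Gamma_a$: this matching is what allows the single radius choice $r=ay_0/2$ to reconcile the competing requirements that $x'\in 2R$ and that the graph remain at heights comparable to $y_0$ over all of $R$.
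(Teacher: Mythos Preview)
Your proof is correct and reaches the same conclusion as the paper, but through a genuinely different route in the key technical step.

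Both arguments start identically: locate via Lemma~\ref{S3:L5}(ii) the point $y=(y_0,y')\in\partial\Gamma_a(x_0,x')$ with $w(y)=\nu$, center a ball $R$ at $y'$ of radius comparable to $ay_0$ (the paper takes $3ay_0/2$, you take $ay_0/2$), and use the Lipschitz bound on $\hbar_{\nu,a}(w)$ to pin the graph over $R$ at heights comparable to $y_0$. The divergence is in how the oscillation of $w$ is controlled. The paper writes $w(z)$ and $w(y)$ in the rescaled form $\bigl(\fint_{B_{1/2}(0)}|u(\cdot+\cdot\,\xi)|^2\,d\xi\bigr)^{1/2}$, applies the triangle inequality in $L^2(d\xi)$, then the Fundamental Theorem of Calculus to $u$ along segments and integrates over $\xi$; this produces a bound by $\int_{\mathcal H}|\nabla u|^2 q_0^{2-n}\,dq$ over a convex hull $\mathcal H\subset\Gamma_b(x')$, hence by $S_b^2(u)(x')$. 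Your approach instead differentiates $w$ directly: from $2w\,\partial_i w=\fint 2u\cdot\partial_i u$ and Cauchy--Schwarz you extract the pointwise bound $|\nabla w(p)|\lesssim\bigl(\fint_{B(p,p_0/2)}|\nabla u|^2\bigr)^{1/2}\lesssim y_0^{-1}\gamma\nu$, and then integrate along the segment.

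Your argument is arguably more elementary, sidestepping the integral-geometric change of variables $ds(q)\,d\xi\mapsto dq$ in the paper. The price is that you need $w$ to be $C^1$ where it is positive (which follows from $|u|^2\in W^{1,1}_{\mathrm{loc}}$ and convolution, but deserves a line of justification), and you must handle the possibility $w=0$ along the segment by a continuity argument; the paper avoids both issues by working with $u$ directly. The paper's method is also the one later reused in the estimate \eqref{y5VVV} and in the localized good-$\lambda$ lemma, so it has some structural economy within the section.
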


\begin{proof} Let $x'\in\partial\Omega$ be such that $N_{a}(w)(x')>\nu$ and 
$S_{b}(u)(x')\leq\gamma\nu$. As before, set $x_0:=\hbar_{\nu, a}(w,x')$.  
From part {\it (ii)} in Lemma~\ref{S3:L5} we know that
there exists a point $y=(y_0,y')\in\partial\Gamma_{a}(x_0,x')$ such that $w(y)=\nu$. 
Let $d:=|x'-y'|$ and define $R=\{z'\in\partial\Omega:\,|z'-y'|<3ay_0/2\}$. 
Then $x'\in 2R$ since $d<ay_0$. This choice
also guarantees that $\hbar_{\nu,a}(w,z')\in [y_0/3,5y_0/3]$ by {\it (i)} in Lemma~\ref{S3:L5}. 

To proceed, consider the set
\begin{equation}\label{Eqqq-19}
\mathcal{O}:=\big\{z=(z_0,z')\in\Omega:\,z'\in R\,\mbox{ and }\,z_0\in [y_0/3,5y_0/3]\big\}.
\end{equation}
In particular, $y\in{\mathcal{O}}$. 
Then all claims in the current lemma are justified as soon as we establish that 
\begin{equation}\label{uHBB}
w(z)>\nu/{2}\,\,\text{ for all }\,\,z\in\mathcal O.
\end{equation}
With this goal in mind, consider $\bigcup_{z\in\mathcal{O}}B_{z_0/2}(z)$. All points of this 
set are at least $y_0/6$ away from the boundary of $\Omega$ and the diameter of this set 
is comparable to $y_0$. Select the number $b>a$ so that
\begin{equation}\label{Eqqq-20}
{\mathcal{B}}:=\bigcup_{z\in\mathcal{O}}B_{z_0/2}(z)\subset\Gamma_b(0,x').
\end{equation}
A simple geometrical argument shows that $b$ can be chosen independently of the location of points 
$x',y'$, and only depends on the size of $a$. Our goal is to estimate the difference $|w(z)-w(y)|$ 
for all $z\in\mathcal{O}$. To this end, fix some $z\in\mathcal{O}$. Abbreviating $B:=B_{1/2}(0)$ 
then permits us to express
\begin{equation}\label{Eqqq-21}
w(z)=\left(\dint_{B}|u(z+z_0\xi)|^{2}\,d\xi\right)^{1/2},\quad
w(y)=\left(\dint_{B}|u(y+y_0\xi)|^{2}\,d\xi\right)^{1/2}.
\end{equation}
It follows that 
\begin{align}
w(z) &=\left(\dint_{B}\left|u(y+y_0\xi)+[u(z+z_0\xi)-u(y+y_0\xi)]\right|^{2}\,d\xi\right)^{1/2}
\nonumber\\[4pt]
&\leq\left(\dint_{B}\left|u(y+y_0\xi)\right|^{2}\,d\xi\right)^{1/2}
+\left(\dint_{B}\left|u(z+z_0\xi)-u(y+y_0\xi)\right|^{2}\,d\xi\right)^{1/2}
\nonumber\\[4pt]
&=w(y)+\left(\dint_{B}\left|u(z+z_0\xi)-u(y+y_0\xi)\right|^{2}\,d\xi\right)^{1/2}.
\end{align}
Since a similar estimate holds when the roles of $y$ and $z$ are interchanged, we eventually conclude that
\begin{equation}\label{Daver}
|w(z)-w(y)|^2\leq\dint_{B}\left|u(z+z_0\xi)-u(y+y_0\xi)\right|^{2}\,d\xi.
\end{equation}

Going further, the Fundamental Theorem of Calculus gives that for any 
two points $z_1,z_2\in\mathcal{B}$ we have
\begin{align}\label{eqFC}
\left|u(z_1)-u(z_2)\right|^2
& \leq\left|\int^1_0(\nabla u)\big(z_1+(z_2-z_1)\tau\big)\cdot(z_1-z_2)\,d\tau\right|^2
\nonumber\\[4pt]
& \leq|z_1-z_2|^2\int^1_0\big|(\nabla u)\big(z_1+(z_2-z_1)\tau\big)\big|^2\,d\tau
\nonumber\\[4pt]
& =y_0^{n-2}|z_1-z_2|^2\int^1_0\big|(\nabla u)\big(z_1+(z_2-z_1)\tau\big)\big|^2 y_0^{2-n}\,d\tau
\nonumber\\
& \leq Cy_0^{n-1}\int_{[z_1,z_2]}|(\nabla u)(q)|^2 q_0^{2-n}\,ds(q),
\end{align}
where the last integral is understood as a line integral over the segment joining $z_1$ and $z_2$. 
We have also use the fact that $|z_1-z_2|\leq Cy_0$ for all $z_1,z_2\in{\mathcal{B}}$. 
We apply this formula to generic pairs of points of the form $z+z_0\xi$, $y+y_0\xi$ for 
$z\in{\mathcal{O}}$ and $\xi\in B$ (which, by design, are in ${\mathcal{B}}$) 
and then integrate in $\xi$. Notice that, for various points $\xi$, the lines joining 
$z+z_0\xi$ with $y+y_0\xi$ are almost parallel; in fact they are genuinely parallel when $z_0=y_0$. 
When integrating in $\xi$ over $B$ a typical point $q$ in the very last expression in  
\eqref{eqFC} considered with $z_1:=z+z_0\xi$ and $z_2:=y+y_0\xi$ 
will belong to certain line segments joining these points with $\xi$ belonging to a certain 
subset of $B$ of $1$-dimensional Hausdorff measure, having size $O(1)$ relative to this measure. 
Hence, 
\begin{equation}\label{Daver2}
\frac1{|B|}\int_{B}\left|u(z+z_0\xi)-u(y+y_0\xi)\right|^2\,d\xi
\leq C\int_{\mathcal{H}}|(\nabla u)(q)|^2 q_0^{2-n}\,dq,
\end{equation}
where $\mathcal{H}$ denotes the convex hull of the set $B_{z_0/2}(z)\cup B_{y_0/2}(y)\subset\Gamma_{b}(0,x')$,
which is a set of diameter comparable to $y_0$. The factor $y_0^{n-1}$ in \eqref{eqFC} disappears 
after integrating in $\xi$ due to the natural change of variables which takes $ds(q)d\xi$ into 
$dq$ in \eqref{Daver2}, the natural Lebesgue measure on $\mathcal{H}$. 
Because $\mathcal{H}$ is contained in $\Gamma_{b}(0,x')$ the right-hand side of \eqref{Daver2} 
may be further estimated by $S^2_{b}(u)(x')\leq\gamma^2\nu^2$. Hence, by combining 
\eqref{Daver}-\eqref{Daver2} we obtain 
\begin{equation}\label{Daver3}
|w(z)-w(y)|^2\leq C(a,n,N)(\gamma\nu)^2\leq\frac{\nu^2}4,
\end{equation} 
if $\gamma$ is chosen so that $C(a,n,N)\gamma^2<1/4$. It follows that for any $z\in\mathcal O$ we have
\begin{equation}\label{Eqqq-22}
w(z)\ge w(y)-|w(y)-w(z)|\ge \nu-\frac{\nu}2=\frac{\nu}2.
\end{equation}
Hence the claim in \eqref{uHBB} follows, finishing the proof of the lemma. 
\end{proof}

Given a Lipschitz function $\hbar:{\mathbb{R}}^{n-1}\to{\mathbb{R}}$, denote by $M_\hbar$ the 
Hardy-Littlewood maximal function considered on the graph of $\hbar$. That is, 
given any locally integrable function $f$ on the Lipschitz surface 
$\Lambda_\hbar=\{(\hbar(z'),z'):\,z'\in\BBR^{n-1}\}$, define 
$(M_\hbar f)(x):=\sup_{r>0}\dint_{\Lambda_\hbar\cap B_r(x)}|f|\,d\sigma$ for each $x\in\Lambda_\hbar$. 

\begin{corollary}\label{S3:L6} 
Let $u$ is an energy solution of the system \eqref{E:D-strip} in $\Omega=\BBR^n_+$ and fix $a>0$. 
Associated with these, let $b,\,\gamma$ be as in Lemma~\ref{l6}. Then there exists a finite 
constant $C=C(n)>0$ with the property that for any $\nu>0$ and any point $x'\in E_{\nu,a}$ 
such that $S_{b}(u)(x')\leq\gamma\nu$ one has
\begin{equation}\label{Eqqq-23}
(M_{\hbar_{\nu,a}}w)\big(\hbar_{\nu,a}(x'),x'\big)\geq\,C\nu.
\end{equation}
\end{corollary}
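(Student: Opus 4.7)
The plan is to deduce this directly from Lemma~\ref{l6} combined with the Lipschitz graph structure furnished by Lemma~\ref{S3:L5}(i). First, apply Lemma~\ref{l6} at the point $x'\in E_{\nu,a}$ satisfying $S_b(u)(x')\leq \gamma\nu$. This produces a boundary ball $R=B_{r_0}(y')$ with $x'\in 2R$ on which the pointwise lower bound
$$w\bigl(\hbar_{\nu,a}(z'),z'\bigr)>\nu/2\qquad\text{for all }z'\in R$$
holds. By Lemma~\ref{S3:L5}(i) the function $\hbar_{\nu,a}$ is Lipschitz with constant $1/a$, so the set $\Lambda_{\hbar_{\nu,a}}=\{(\hbar_{\nu,a}(z'),z'):z'\in\BBR^{n-1}\}$ is a genuine Lipschitz graph and the Hardy--Littlewood maximal operator $M_{\hbar_{\nu,a}}$ is well defined on it.

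Next, transfer the pointwise lower bound from $R$ to a surface ball on the graph. Set $P:=(\hbar_{\nu,a}(x'),x')$ and let $\widetilde R:=\{(\hbar_{\nu,a}(z'),z'):z'\in R\}\subset \Lambda_{\hbar_{\nu,a}}$ denote the lift of $R$. Since $x'\in 2R$, any $z'\in R$ satisfies $|z'-x'|\leq 3r_0$, so the Lipschitz bound gives
$$\bigl|(\hbar_{\nu,a}(z'),z')-P\bigr|\leq \sqrt{1+a^{-2}}\,|z'-x'|\leq 3\sqrt{1+a^{-2}}\,r_0=:\rho.$$
Hence $\widetilde R\subset \Lambda_{\hbar_{\nu,a}}\cap B_\rho(P)$. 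Because $\hbar_{\nu,a}$ has Lipschitz constant $1/a$, the area formula yields
$$\sigma(\widetilde R)\approx r_0^{n-1},\qquad \sigma\bigl(\Lambda_{\hbar_{\nu,a}}\cap B_\rho(P)\bigr)\lesssim \rho^{n-1},$$
with implicit constants depending only on $n$ (and, through $\rho$, on the fixed aperture $a$). Consequently
$$\frac{\sigma(\widetilde R)}{\sigma(\Lambda_{\hbar_{\nu,a}}\cap B_\rho(P))}\geq c(n)>0.$$

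Finally, assemble these ingredients into the desired lower bound for the maximal function. Using $\Lambda_{\hbar_{\nu,a}}\cap B_\rho(P)$ as a test surface ball in the definition of $M_{\hbar_{\nu,a}}$, and discarding the portion of the integrand outside $\widetilde R$, we obtain
$$(M_{\hbar_{\nu,a}}w)(P)\geq \frac{1}{\sigma(\Lambda_{\hbar_{\nu,a}}\cap B_\rho(P))}\int_{\widetilde R}w\,d\sigma\geq \frac{\nu}{2}\cdot\frac{\sigma(\widetilde R)}{\sigma(\Lambda_{\hbar_{\nu,a}}\cap B_\rho(P))}\geq C\nu,$$
which is precisely \eqref{Eqqq-23}. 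The only nontrivial point in this argument is the bookkeeping that ensures $\widetilde R$ occupies a fixed proportion of the surface ball $\Lambda_{\hbar_{\nu,a}}\cap B_\rho(P)$; this however is automatic once the Lipschitz property of $\hbar_{\nu,a}$ is invoked, since the radius $\rho$ is a fixed multiple of $r_0$. No further PDE input is required: all the work has been done in Lemma~\ref{l6}.
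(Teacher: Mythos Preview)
Your proof is correct and follows the same line as the paper's own argument: invoke Lemma~\ref{l6} to obtain the ball $R$ with $x'\in 2R$ and $w>\nu/2$ on its lift, then use (a dilate of) $R$ as a test set in the maximal function to extract the lower bound. The only difference is cosmetic: the paper simply writes
\[
(M_{\hbar_{\nu,a}}w)\big(\hbar_{\nu,a}(x'),x'\big)\ge \frac{1}{|2R|}\int_R w\big(\hbar_{\nu,a}(z'),z'\big)\,dz'\ge \frac{|R|}{|2R|}\cdot\frac{\nu}{2},
\]
tacitly identifying boundary balls in $\BBR^{n-1}$ with surface balls on the graph; you instead lift $R$ explicitly to $\widetilde R$, choose a genuine surface ball $\Lambda_{\hbar_{\nu,a}}\cap B_\rho(P)$ containing it, and use the area formula to compare measures. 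Your version is the more careful of the two. One minor remark: your constant depends on the fixed aperture $a$ (through $\rho=3\sqrt{1+a^{-2}}\,r_0$), whereas the statement claims $C=C(n)$; this is harmless since $a$ is fixed throughout, and in fact the paper's informal identification hides the same dependence.
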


\begin{proof} 
Fix a point $x'\in E_{\nu,a}$ where $S_{b}(u)(x')\leq\gamma\nu$. Lemma~\ref{l6} then guarantees the 
existence of a boundary ball $R$ with the property that $w(\hbar_{\nu,a}(w)(z'),z')>\nu/{2}$ for all 
$z'\in R$ and $x'\in 2R$. Granted this, it follows that 
\begin{equation}\label{Eqqq-24}
(M_{\hbar_{\nu,a}}w)\big(\hbar_{\nu,a}(w)(x'),x'\big)\geq\frac1{|2R|}\int_R w\big(\hbar_{\nu,a}(w)(z'),z'\big)\,dz'
\geq\frac{|R|}{|2R|}\frac{\nu}{{2}},
\end{equation}
as desired. 
\end{proof}

\begin{lemma}\label{S3:L8} 
Consider the system \eqref{E:D} with coefficients satisfying Carleson condition and the condition \eqref{EllipLH}. 
Then there exists $a>0$ with the following significance. Suppose $u$ is a weak solution of  \eqref{E:D-strip} 
in $\Omega={\mathbb{R}}^n_{+}$. Select $\theta\in[1/6,6]$ and, having picked $\nu>0$ arbitrary,  
let $\hbar_{\nu,a}(w)$ be as in \eqref{h}. Also, consider the domain 
$\mathcal{O}=\{(x_0,x')\in\Omega:\,x_0>\theta \hbar_{\nu,a}(x')\}$ with boundary  
$\partial\mathcal{O}=\{(x_0,x')\in\Omega:\,x_0=\theta \hbar_{\nu,a}(x')\}$. In this context, 
for any surface ball $\Delta_r=B_r(Q)\cap\partial\Omega$, with $Q\in\partial\Omega$ and $r>0$ 
chosen such that $\hbar_{\nu,a}(w)\leq 2r$ pointwise on $\Delta_{2r}$, 
one has
\begin{align}\label{TTBBMM}
\int_{\Delta_r}\big|u\big(\theta \hbar_{\nu,a}(w)(\cdot),\cdot\big)\big|^2\,dx' 
&\leq C(1+\|\mu\|^{1/2}_{\mathcal C})\|S_b(u)\|_{L^2(\Delta_{2r})}
\|\tilde{N}_a(u)\|_{L^2(\Delta_{2r})}
\nonumber\\
&\quad+C\|S_b(u)\|^2_{L^2(\Delta_{2r})}+\frac{c}{r}\iint_{\mathcal{K}}|u(X)|^{2}\,dX.
\end{align}
Here $C=C(\lambda,\Lambda,n,N)\in(0,\infty)$ and $\mathcal{K}$ is a region inside $\mathcal{O}$ of diameter, 
distance to the boundary $\partial\mathcal{O}$, and distance to $Q$, are all comparable to $r$. 
Also, the parameter $b>a$ is as in Lemma~\ref{l6}, and the cones used to define the square and nontangential 
maximal functions in this lemma have vertices on $\partial\Omega$.

Moreover, the term $\displaystyle\iint_{\mathcal{K}}|u(X)|^2\,dX$ appearing 
in \eqref{TTBBMM} may be replaced by the quantity
\begin{equation}\label{Eqqq-25}
Cr^{n-1}|u_{\rm av}(A_r)|^2+C\int_{\Delta_{2r}}S^2_b(u)\,d\sigma,
\end{equation}
where $A_r$ is any point inside $\mathcal{K}$ {\rm (}usually called a corkscrew point of $\Delta_r${\rm )} and
\begin{equation}\label{Eqqq-26}
u_{\rm av}(X):=\dint_{B_{\delta(X)/2}(X)}u(Z)\,dZ.
\end{equation}
\end{lemma}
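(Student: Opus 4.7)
My strategy is to derive an integration-by-parts identity over the Lipschitz domain $\mathcal{O}=\{x_0>\theta\hbar_{\nu,a}(w)(x')\}$ that produces the boundary trace $\int\phi^2|u(\theta\hbar(\cdot),\cdot)|^2\,dx'$ as the principal contribution, and to bound the remaining volume integrals by the square function, the nontangential maximal function, and a corkscrew error. The reductions of Section~\ref{SS:Nor} yielding $A^{\alpha\beta}_{0j}=\delta^{\alpha\beta}\delta_{0j}$ are essential for extracting that boundary trace cleanly. I set up a tangential cutoff $\phi(x')$ with $\phi\equiv 1$ on $\Delta_r$, $\operatorname{supp}\phi\subset\Delta_{2r}$, and $|\nabla\phi|\lesssim 1/r$, together with a vertical cutoff $\eta(x_0)$ with $\eta\equiv 1$ for $x_0\le 2\theta r$, $\eta\equiv 0$ for $x_0\ge 4\theta r$, and $|\eta'|\lesssim 1/r$. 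The hypothesis $\hbar_{\nu,a}\le 2r$ on $\Delta_{2r}$ ensures $\eta(\theta\hbar(x'))=1$ for every relevant $x'$.

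The test I would plug into the weak formulation of $\mathcal L u=0$ is $\varphi_\alpha=\phi^2\eta\psi\, u_\alpha$, where $\psi(x_0,x'):=x_0-\theta\hbar(x')$; since $\psi$ vanishes on $\partial\mathcal O$, no surface integral arises from this integration by parts. After expanding $\partial_i(\phi^2\eta\psi u_\alpha)$ and exploiting $A^{\alpha\beta}_{0j}=\delta^{\alpha\beta}\delta_{0j}$, the $(i,j)=(0,0)$ contribution, together with one more integration by parts in $x_0$, isolates $-\tfrac12\int\phi^2|u(\theta\hbar,\cdot)|^2\,dx'$ on one side. The remaining terms are estimated as follows: the drift term $\int B^{\alpha\beta}_i\partial_iu_\beta\phi^2\eta\psi u_\alpha\,dx$ is controlled by Cauchy--Schwarz with an $x_0$-weight (using $\eta\psi\le x_0$) and Theorem~\ref{T:Car} applied to the Carleson measure $|B|^2 x_0\,dx$, giving $\|\mu\|_{\mathcal C}^{1/2}\|S_b(u)\|_{L^2(\Delta_{2r})}\|\tilde N_a(u)\|_{L^2(\Delta_{2r})}$; the coercive-type term $\int\phi^2\eta\psi A^{\alpha\beta}_{ij}\partial_j u_\beta\partial_i u_\alpha\,dx$ is bounded by $\Lambda\int x_0|\nabla u|^2\phi^2\,dx\lesssim\|S_b(u)\|^2_{L^2(\Delta_{2r})}$; the terms carrying $\eta'$ or $\eta''\psi$ are supported in $\{2\theta r\le x_0\le 4\theta r\}\cap\{x'\in\Delta_{2r}\}$ and produce the corkscrew error $\tfrac cr\iint_{\mathcal K}|u|^2\,dX$; and the terms carrying $\partial_{x'}\phi^2$, supported in $\Delta_{2r}\setminus\Delta_r$, are handled by Cauchy--Schwarz together with $|\nabla\phi|\lesssim 1/r$ and the Whitney-averaging bound $\int_{\mathcal O\cap\{x_0\le 4\theta r\}}|u|^2\,dx\lesssim r\|\tilde N_a(u)\|^2_{L^2(\Delta_{2r})}$, yielding another $\|S_b\|\|\tilde N_a\|$ contribution.

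The main obstacle is the term $T_C=-2\theta\int A^{\alpha\beta}_{ij}\partial_j u_\beta\,\phi^2\eta\,\partial_i\hbar\,u_\alpha\,dx$ (sum over $i>0$) produced by the tangential derivatives of $\psi$. A naive Cauchy--Schwarz with $x_0$-weight generates the divergent integral $\int|u|^2/x_0$ (since $\hbar$ can vanish on $E_{\nu,a}^c$), so I instead integrate by parts in $\partial_j$. The resulting boundary contribution on $\partial\mathcal O$ is a quadratic form in $u|_{\partial\mathcal O}$ whose coefficient is bounded by $C\theta/a$, and is absorbed into the left-hand side provided $a$ is chosen large enough---this is exactly the role of the universal aperture $a>0$ in the statement. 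The new interior terms contain either factors of $\partial A$ (treated by Theorem~\ref{T:Car} and the Carleson hypothesis, producing an additional $\|\mu\|_{\mathcal C}^{1/2}\|S_b\|\|\tilde N_a\|$), derivatives of $\phi$ or $\eta$ (handled as already indicated), or distributional second derivatives of $\hbar$ which are treated by first regularizing $\hbar$ to $\hbar_\varepsilon\in C^\infty$, establishing the inequality with constants uniform in $\varepsilon$, and passing to the limit using only the Lipschitz bound $|\nabla\hbar|\le 1/a$. Collecting everything and absorbing the small-coefficient piece gives \eqref{TTBBMM}.

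Finally, the alternative expression \eqref{Eqqq-25} for the corkscrew term follows from the decomposition $u=(u-u_{\rm av}(A_r))+u_{\rm av}(A_r)$ on $\mathcal K$: the constant piece contributes $|\mathcal K||u_{\rm av}(A_r)|^2\sim r^n|u_{\rm av}(A_r)|^2$, i.e., $r^{n-1}|u_{\rm av}(A_r)|^2$ after dividing by $r$; for the mean-zero part, the Poincar\'e inequality on the connected set $\mathcal K$ (of diameter $\sim r$), combined with the fact that $\delta(X)\sim r$ throughout $\mathcal K$, yields
\[
\int_{\mathcal K}|u-u_{\rm av}(A_r)|^2\,dX\lesssim r^2\int_{\mathcal K}|\nabla u|^2\,dX\lesssim r\int_{\mathcal K}|\nabla u|^2\,x_0\,dX\lesssim r\int_{\Delta_{2r}}S_b^2(u)\,d\sigma,
\]
so that division by $r$ produces the $\int_{\Delta_{2r}}S_b^2\,d\sigma$ term.
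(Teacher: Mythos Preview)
Your strategy of working directly on $\mathcal{O}$ with the test function $\phi^2\eta\psi\,u_\alpha$, $\psi=x_0-\theta\hbar$, is a genuine alternative to the paper's route, which instead \emph{flattens} $\partial\mathcal{O}$ by the Dahlberg--Kenig--Stein pullback $\rho$ of \S\ref{SS:PT}, sets $v=u\circ\rho$, arranges $\bar A_{00}=I$, and then runs the elementary identity
\[
\int v_\alpha^2(0,\cdot)\zeta=-\iint\partial_0\bigl[v_\alpha^2\zeta\bigr]
\]
followed by one more integration by parts introducing the weight $x_0$ and the use of the equation for $\partial_{00}^2 v_\alpha$. In the paper's computation every interior term carries a factor $\nabla v$, so each is controlled either by $\|S_b\|^2$ or, via Theorem~\ref{T:Car}, by $\|\mu\|_{\mathcal C}^{1/2}\|S_b\|\|\tilde N_a\|$.

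The gap in your argument is the handling of $T_C$. After you integrate $T_C$ by parts in $\partial_j$, three of the new interior pieces are not under control:
\begin{itemize}
\item The term $\displaystyle\int_{\mathcal O}(\partial_jA^{\alpha\beta}_{ij})\,u_\alpha u_\beta\,\phi^2\eta\,\partial_i\hbar\,dx$ contains \emph{no} factor of $\nabla u$, so it cannot produce the bound $\|\mu\|_{\mathcal C}^{1/2}\|S_b\|\|\tilde N_a\|$ you claim. Any Cauchy--Schwarz with an $x_0$--weight forces $\int_{\mathcal O}|u|^2/x_0\,dx$, and since $\hbar$ vanishes on $E_{\nu,a}^c$ this integral reaches down to $x_0=0$ over points where $u(0,\cdot)=f\neq 0$; it is in general infinite.
\item The term $\displaystyle\int_{\mathcal O}A^{\alpha\beta}_{ij}\,u_\beta(\partial_j u_\alpha)\,\phi^2\eta\,\partial_i\hbar\,dx$ is of exactly the same type as $T_C$ (with the roles of $\alpha,\beta$ reversed). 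Without the symmetry $A^{\alpha\beta}_{ij}=A^{\beta\alpha}_{ij}$, which is not assumed, the two do not combine into a total derivative and the integration by parts is circular.
\item The term with $\partial_i\partial_j\hbar$ cannot be made uniform under mollification: for a merely Lipschitz $\hbar$ one only has $\|D^2\hbar_\varepsilon\|_\infty\lesssim(a\varepsilon)^{-1}$, and no cancellation in your scheme removes this blow-up.
\end{itemize}

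The pullback in the paper sidesteps all three difficulties simultaneously: the first and second derivatives of $\hbar$ are absorbed into the new coefficients $\bar A,\bar B$, and the key fact that $|\nabla^2(P_{\gamma x_0}\!*\!\hbar)|^2x_0\,dx$ is a Carleson measure with norm $\lesssim a^{-2}$ (the standard square-function estimate for cancellative kernels acting on $\nabla\hbar\in L^\infty$) guarantees that $\bar\mu$ is again Carleson. That is why the large aperture $a$ enters in the paper's proof only through the bound $\|\bar\mu\|_{\mathcal C}\le 2\|\mu\|_{\mathcal C}$, not through an absorption on $\partial\mathcal O$. Your final paragraph on the corkscrew replacement \eqref{Eqqq-25} via Poincar\'e is correct and matches the paper.
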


\begin{proof} Fix $\theta\in [1/6,6]$. We first consider the case when $r$ is small, that is $2r\le h$.
This implies that $u=u_h$ solves the PDE system $\mathcal Lu=0$ on the set we shall integrate over.

Consider the pull-back transformation $\rho:\BBR^{n}_{+}\to\mathcal{O}$ 
defined as in section \ref{SS:PT} relative to the Lipschitz function $\theta\hbar_{\nu,a}(w)$.
Let $v=(v_{\beta})_{1\leq\beta\leq N}$ be given by $v:=u\circ\rho$ in $\BBR^{n}_{+}$. 
Thanks to the assumptions made on the system \eqref{E:D}, the vector-valued function 
$v:\BBR^{n}_+\to\BBR^N$ will satisfy a PDE similar to that of $u$. Specifically, we have
\begin{equation}\label{ESv}
\left[\partial_{i}\left(\bar{A}_{ij}^{\alpha\beta}(x)\partial_{j}v_{\beta}\right)
+\bar{B}_{i}^{\alpha\beta}(x)\partial_{i}v_{\beta}\right]_{\alpha}=0,
\end{equation}
where $\bar{A}$ is uniformly elliptic and the coefficients $\bar{A}$ and $\bar{B}$ are such that
\begin{equation}\label{CarbarA}
d\overline{\mu}(x)=\left[\left(\sup_{B_{\delta(x)/2}(x)}|\nabla\bar{A}(x)|\right)^{2}
+\left(\sup_{B_{\delta(x)/2}(x)}|\bar{B}(x)|\right)^{2}\right]\delta(x) \,dx
\end{equation}
is a Carleson measure in ${\mathbb{R}}^n_{+}$. Moreover, the Carleson norm $\|\overline\mu\|_{\mathcal{C}}$ 
only depends on the Carleson norm of the original coefficients and the Lipschitz norm of the function $\hbar_{\nu,a}$. When the Lipschitz norm of this function goes to zero we have 
$$\limsup \|\overline\mu\|_{\mathcal{C}}\le \|\mu\|_{\mathcal{C}}$$
and hence the parameter $a>0$ may be chosen 
large enough so that the Lipschitz norm of the function $\theta \hbar_{\nu,a}$ is sufficiently small (at most $6/a$)
such that $\|\overline\mu\|_{\mathcal{C}}\le 2\|\mu\|_{\mathcal{C}}$.  As we have observed before for the original equation we may arrange (by change of variables) that $\bar{A}_{00}=I_{N\times N}$. This is true even if we only assume \eqref{EllipLH} as the condition implies invertibility of the matrix $\bar{A}_{00}$. Hence we can use  \eqref{hatA}-\eqref{hatB}.
\vskip1mm

Having fixed a scale $r>0$, we localize to a ball $B_r(y')$ in $\BBR^{n-1}$. 
Let $\zeta$ be a smooth cutoff function of the form $\zeta(x_0, x')=\zeta_{0}(x_0)\zeta_{1}(x')$ where
\begin{equation}\label{Eqqq-27}
\zeta_{0}= 
\begin{cases}
1 & \text{ in } [0, r], 
\\
0 & \text{ in } [2r, \infty),
\end{cases}
\qquad
\zeta_{1}= 
\begin{cases}
1 & \text{ in } B_{r}(y'), 
\\
0 & \text{ in } \mathbb{R}^{n}\setminus B_{2r}(y')
\end{cases}
\end{equation}
and
\begin{equation}\label{Eqqq-28}
r|\partial_{0}\zeta_{0}|+r|\nabla_{x'}\zeta_{1}|\leq c
\end{equation}
for some constant $c\in(0,\infty)$ independent of $r$.
Our goal is to control the $L^2$ norm of $u\big(\theta \hbar_{\nu,a}(w)(\cdot),\cdot\big)$.  
Since after the pullback under the mapping $\rho$ the latter is comparable with the $L^2$ norm 
of $v(0,\cdot)$, we fix $\alpha\in\{1,\dots,N\}$ and proceed to estimate
\begin{align}\label{u6tg}
&\hskip -0.20in
\int_{B_{2r}(y')}v_{\alpha}^{2}(0,x')\zeta(0,x')\,dx' 
\nonumber\\[4pt]
&\hskip 0.70in
=-\iint_{[0,2r]\times B_{2r}(y')}\partial_{0}\left[v_{\alpha}^{2}(x_0,x')\zeta(x_0,x')\right]\,dx_0\,dx' 
\nonumber\\[4pt]
&\hskip 0.70in
=-2\iint_{[0,2r]\times B_{2r}(y')}v_{\alpha}\partial_{0}v_{\alpha}\zeta\,dx_0\,dx'  
\nonumber\\[4pt]
&\hskip 0.70in
\quad-\iint_{[0,2r]\times B_{2r}(y')}v_{\alpha}^{2}(x_0,x')\partial_{0}\zeta\,dx_0\,dx'
\nonumber\\[4pt]
&\hskip 0.70in
=:\mathcal{A}+IV.
\end{align}
We further expand the term $\mathcal A$ as a sum of three terms obtained 
via integration by parts with respect to $x_0$ as follows:
\begin{align}\label{utAA}
\mathcal A &=-2\iint_{[0,2r]\times B_{2r}(y')}v_{\alpha}\partial_{0} 
v_{\alpha}\left(\partial_{0}x_0\right)\zeta\,dx_0\,dx' 
\nonumber\\[4pt]
&=2\iint_{[0,2r]\times B_{2r}(y')}\left|\partial_{0}v_{\alpha}\right|^{2}x_0\zeta\,dx_0\,dx' 
\nonumber\\[4pt]
&\quad +2\iint_{[0,2r]\times B_{2r}(y')}v_{\alpha}\partial^2_{00}v_{\alpha}x_0\zeta\,dx_0\,dx' 
\nonumber\\[4pt]
&\quad +2\iint_{[0,2r]\times B_{2r}(y')}v_{\alpha}\partial_{0}v_{\alpha}x_0\partial_{0}\zeta\,dx_0\,dx' 
\nonumber\\[4pt]
&=:I+II+III.
\end{align}

We start by analyzing the term $II$. In view of the fact that $\bar{A}_{00}=I_{N\times N}$, 
the PDE recorded in \eqref{ESv} allows us to write
\begin{equation}\label{S3:T8:E01}
\partial^2_{00}v_{\alpha}
=-\sum_{(i,j)\neq(0,0)}\partial_{i}\left(\bar{A}_{ij}^{\alpha\beta}\partial_{j}v_{\beta}\right)
-B_i^{\alpha\beta}\partial_iv_\beta.
\end{equation}
In turn, this permits us to express
\begin{align}\label{TFWW}
II &=-2\sum_{(i,j)\neq(0,0)}\iint_{[0,2r]\times B_{2r}}\partial_{i}\left(\bar{A}_{ij}^{\alpha\beta}\right) 
v_\alpha\partial_{j}v_{\beta}x_0\zeta\,dx_0\,dx' 
\nonumber\\[4pt]
&\quad -2\iint_{[0,2r]\times B_{2r}}B_i^{\alpha\beta}v_\alpha\partial_{i}v_{\beta}x_0\zeta\,dx_0\,dx' 
\nonumber\\[4pt]
&\quad -2\sum_{(i,j)\neq(0,0)}\iint_{[0,2r]\times B_{2r}}\bar{A}_{ij}^{\alpha\beta}v_\alpha
\partial^2_{ij}v_{\beta}x_0 \zeta\,dx_0\,dx' 
\nonumber\\[4pt]
&=:II_{1}+II_{2}+II_{3}.
\end{align}
The last term above requires some further work. Let us temporarily fix $i,j$ and denote by 
$II_{3}^{ij}$ the corresponding term in $II_3$. Since in the present context we have
$(i,j)\neq(0,0)$, at least one of the two indices involved is not zero, say $i>0$. 
Integrating by parts with respect to the variable $x_i$ then yields (in what follows we do 
not sum over indices $i$ and $j$)
\begin{align}\label{6GBBB}
II_{3}^{ij} &=2\iint_{[0,2r]\times B_{2r}}\partial_{i}\left(\bar{A}_{ij}^{\alpha\beta}\right) 
v_\alpha\partial_{j}v_{\beta}x_0\zeta\,dx_0\,dx' 
\nonumber\\[4pt]
&\quad +2\iint_{[0,2r]\times B_{2r}}\bar{A}_{ij}^{\alpha\beta}\partial_{i}v_{\alpha}\partial_jv_{\beta}   
x_0\zeta\,dx_0\,dx' 
\nonumber\\[4pt]
&\quad +2\iint_{[0,2r]\times B_{2r}}\bar{A}_{ij}^{\alpha\beta}v_\alpha\partial_{j}v_{\beta} 
x_0\partial_{i}\zeta\,dx_0\,dx' 
\nonumber\\[4pt]
&=J^{ij}_1+J^{ij}_2+J^{ij}_3.
\end{align}
The treatment of $II_{3}^{ij}$ in the case when $i=0$ proceeds along the same lines, except that 
we now integrate in the variable $x_j$. Since the resulting terms are of a similar nature as above, 
we omit writing them explicitly. 

We now group together terms that are of the same type. Firstly, we have 
\begin{equation}\label{Eqqq-29}
I+J_{2}\leq C(\lambda,\Lambda,n,N)\|S_b(u)\|^2_{L^2(B_{2r})}.
\end{equation}
Here,  the estimate would be true even with $\|S^{2r}_b(v)\|^2_{L^2(B_{2r})} $ which is at every point
dominated by $\|S_b(u)\|^2_{L^2(B_{2r})}$.
Secondly, the Carleson condition \eqref{CarbarA} and the Cauchy-Schwarz inequality imply
\begin{equation}\label{Eqqq-30}
II_{1} + II_{2}+J_1 \leq C(n,N)\|\mu\|_{\mathcal C}^{1/2} 
\|S_b(u)\|_{L^2(B_{2r})}\|\tilde{N}_a(u)\|_{L^2(B_{2r})}.
\end{equation}
Next, corresponding to the case when the derivative falls on the cutoff function $\zeta$ we have
\begin{align}\label{TDWW}
J_{3}+III &\leq C(\lambda,\Lambda,n,N)\iint_{[0,2r]\times B_{2r}}\left|\nabla v\right||v|\frac{x_0}{r}\,dx_0\,dx' 
\nonumber\\[4pt]
&\leq C(\lambda,\Lambda,n,N)\left(\iint_{[0,2r]\times B_{2r}}|v|^{2}\frac{x_0}{r^{2}}\,dx_0\,dx'\right)^{1/2} 
\|S^{2r}_b(v)\|_{L^2(B_{2r})} 
\nonumber\\[4pt]
&\leq C(\lambda,\Lambda,n,N)\|S_b(u)\|_{L^2(B_{2r})}\|\tilde{N}_a(u)\|_{L^2(B_{2r})}.
\end{align}
Finally, the interior term $IV$, which arises from the fact that $\partial_{0}\zeta$ vanishes on the set
$(0,r)\cup(2r,\infty)$ may be estimated as follows:
\begin{equation}\label{Eqqq-31}
IV\leq\frac{c}{r}\iint_{[r,2r]\times B_{2r}}|v|^{2}\,dx_0\,dx'.
\end{equation}
Summing up all terms, the above analysis ultimately yields
\begin{align}\label{E1:uonh}
&\hskip -0.20in \int_{B_{r}(y')}|v(0,x')|^2\,dx' 
\nonumber\\[4pt]
&\hskip 0.40in 
\leq C(\lambda,\Lambda,n,N)(1+\|\mu\|^{1/2}_{\mathcal C}) 
\|S_b(u)\|_{L^2(B_{2r})}\|\tilde{N}_a(u)\|_{L^2(B_{2r})}
\nonumber\\[4pt]
&\hskip 0.40in 
\quad+C(\lambda,\Lambda,n,N)\|S_b(u)\|^2_{L^2(B_{2r})}
+\frac{c}{r}\iint_{[r,2r]\times B_{2r}}|v|^2\,dx_0\,dx'.
\end{align}
With this in hand, the estimate in \eqref{TTBBMM} follows (by passing from $v$ back to $u$ via the map $\rho$).

The case $r>\!>h$ requires some extra care. Observe that for $\theta\hbar_{\nu,a}(w)(x')\ge h$ 
we have $u\big(\theta\hbar_{\nu,a}(w)(x'),x'\big)=0$, the integrand in the left-hand side of \eqref{TTBBMM} vanishes at such points. 
It follows that without loss of generality we may modify our function $\hbar_{\nu,a}$ assume that $\theta\hbar_{\nu,a}(w)\le h$ 
in $\Delta_r$ without changing the value of the left-hand side of  \eqref{TTBBMM}. What this implies is that the estimate 
\eqref{TTBBMM} for $\Delta_r$ can be deduced from adding up estimates like \eqref{TTBBMM} formulated for smaller balls 
$\Delta_{r'}\subset\Delta_r$, where $r'\approx h$. In such a scenario, we still have $\hbar_{\nu,a}\le 2r'$, 
and the desired estimate for such small balls was established above. Ultimately, we conclude that 
\eqref{TTBBMM} holds for balls of all sizes.

Finally, the last claim in the statement of the lemma can be seen as follows. 
If $\mathcal K=B_{\delta(X)/2}(X)$ and $A_r=X$ then the claim in question becomes a direct
consequence of Poicar\'e's inequality (cf. Lemma~\ref{poincare}). 
For more general $\mathcal K$, there is a finite covering of $\mathcal K$ 
by balls of the form $B_i=B_{\delta(X_i)/2}(X_i)$. Then
\begin{equation}\label{Eqqq-32}
\iint_{\mathcal{K}}|u|^2\,dX\leq\sum_i\int_{B_i}|u|^2\,dZ\leq C\sum_i r^{n-1}|u_{\rm av}(X_i)|^2
+\int_{\Delta_{2r}}S_b(u)\,d\sigma,
\end{equation}
by Poincar\'e's inequality. Furthermore, for each $i$ we have 
(abbreviating $r_i:=\delta(X_i)$, $\bar{r}:=\delta(A_r)$, and $B:=B_{1/2}(0)$):
\begin{align}\label{y5VVV}
\big|u_{\rm av}(X_i)\big|^2 &\leq 2\big|u_{\rm av}(A_r)\big|^2+2\big|u_{\rm av}(X_i)-u_{\rm av}(A_r)\big|^2
\nonumber\\[4pt]
&\leq 2\big|u_{\rm av}(A_r)\big|^2+2\left(\dint_B\big|u(X_i+r_i\xi)-u(A_r+\bar{r}\xi)\big|\,d\xi\right)^2
\nonumber\\[4pt]
&\leq 2\big|u_{\rm av}(A_r)\big|^2+2\,\dint_B\left|u(X_i+r_i\xi)-u(A_r+\bar{r}\xi)\right|^2\,d\xi.
\end{align}
Note that the last term above is of the same type as the right-hand side of \eqref{Daver}. 
As in the past, the term in question may once again be estimated as in \eqref{Daver2}. 
Hence, ultimately, this is $\leq C(S_b(u)(Q))^2$ for all $Q\in\Delta_{2r}$. The desired 
conclusion now readily follows from this.
\end{proof}

We now make use of Lemma~\ref{S3:L8}, involving the stopping time Lipschitz functions 
$\theta \hbar_{\nu,a}(w)$, in order to obtain the good-$\lambda$ inequality stated in the next lemma. As a preamble, 
we agree to let $Mf(x'):=\sup_{r>0}\dint_{|x'-z'|<r}|f(z')|\,dz'$, for $x'\in{\mathbb{R}}^{n-1}$, 
denote the standard Hardy-Littlewood maximal function on $\partial\BBR^n_+=\BBR^{n-1}$. 

\begin{lemma}\label{LGL} 
Consider the system \eqref{E:D} with coefficients satisfying the Carleson condition and \eqref{EllipLH} 
in $\BBR^n_{+}$. Then for each $\gamma\in(0,1)$ there exists a constant $C(\gamma)>0$ 
such that $C(\gamma)\to 0$ as $\gamma\to 0$ and with the property that for each $\nu>0$ and 
each energy solution $u$ of \eqref{E:D-strip} there holds 
\begin{align}\label{eq:gl}
&\hskip -0.20in 
\left|\Big\{x'\in {\BBR}^{n-1}:\,\tilde{N}_a(u)>\nu,\,(M(S^2_b(u)))^{1/2}\leq\gamma\nu,\,
\big(M(S^2_b(u))M(\tilde{N}_a^2(u))\big)^{1/4}\leq\gamma\nu\Big\}\right|
\nonumber\\[4pt] 
&\hskip 0.50in
\quad\le C(\gamma)\left|\big\{x'\in{\BBR}^{n-1}:\,\tilde{N}_a(u)(x')>\nu/32\big\}\right|.
\end{align}
\end{lemma}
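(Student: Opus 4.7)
The plan is to prove (\ref{eq:gl}) by a Calder\'on--Zygmund/Whitney-decomposition argument applied to the open set
$F := \{x' \in \BBR^{n-1} : \tilde{N}_a(u)(x') > \nu/32\}$
(open because $\tilde{N}_a(u)$ is lower semicontinuous, via continuity of $w$). I would write $F$ as an essentially disjoint union of Whitney boundary balls $F = \bigcup_j B_j$ with $B_j = B_{r_j}(x_j')$ and $r_j \approx \operatorname{dist}(B_j, F^c)$. Since $E \subset F$, it then suffices to prove a per-piece estimate $|E \cap B_j| \leq C(\gamma)|B_j|$ with $C(\gamma) \to 0$ as $\gamma \to 0$, and sum.

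For each Whitney ball $B_j$ with $E \cap B_j \neq \varnothing$, pick $x_j^* \in F^c$ at distance at most $Cr_j$ from $B_j$. Then $w \leq \nu/32$ throughout $\Gamma_a(x_j^*)$ by definition of $\tilde{N}_a(u)(x_j^*)$. A short geometric argument shows that for all $y' \in 2B_j$ and all $t$ larger than a suitable constant times $r_j/a$, the point $(t,y')$ lies in $\Gamma_a(x_j^*)$. This gives two things at once: (i) $\hbar_{\nu,a}(w)(y') \leq C r_j$ throughout $2B_j$, verifying the hypothesis $\hbar_{\nu,a} \leq 2r$ of Lemma~\ref{S3:L8} at scale $r \approx r_j$; and (ii) the corkscrew point $A_{r_j}$ may be chosen inside $\Gamma_a(x_j^*)$, yielding $|u_{\mathrm{av}}(A_{r_j})| = w(A_{r_j}) \leq \nu/32$ and hence a corkscrew contribution of at most $Cr_j^{n-1}(\nu/32)^2$. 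The explicit factor $1/32^2$ is what dictates the threshold $\nu/32$ in the statement of the lemma.

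Next, fix $x^\dagger \in E \cap B_j$. Since $S_b(u)(x') \leq (M(S_b^2(u))(x'))^{1/2} \leq \gamma\nu$ on $E$, Corollary~\ref{S3:L6} gives $(M_{\hbar_{\nu,a}} w)(\hbar_{\nu,a}(x'), x') \geq c_0 \nu$ for every $x' \in E$. After truncating the Lipschitz-graph maximal operator to scales $\lesssim r_j$ (on larger scales the average is controlled by $\nu/32 \ll c_0\nu/2$) and invoking the weak-type $(2,2)$ inequality for this truncated operator, one obtains
\[
|E \cap B_j| \leq \frac{C}{\nu^2} \int_{CB_j} w\bigl(\hbar_{\nu,a}(y'), y'\bigr)^2\, dy'.
\]
Writing $w^2$ as an $L^2$ average and slicing via $z_0 = \theta \hbar(y')$, $z' = y' + \hbar(y')\xi'$ for $\theta \in [1/2,3/2]$, combined with a bi-Lipschitz change of variable in $y'$ (whose distortion is controlled by the small Lipschitz constant of $\hbar_{\nu,a}$, guaranteed by choosing $a$ large), should produce
\[
\int_{CB_j} w(\hbar(y'), y')^2\, dy' \leq C \int_{1/2}^{3/2} \int_{C'B_j} |u(\theta\hbar(y'), y')|^2\, dy'\, d\theta,
\]
so for some $\theta_0 \in [1/6, 6]$ the inner $y'$-integral at $\theta_0$ is dominated by the $\theta$-average. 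Applying Lemma~\ref{S3:L8} at this $\theta_0$ and $r \approx r_j$, and using the bounds $\|S_b(u)\|_{L^2(C'B_j)}^2 \leq Cr_j^{n-1}\gamma^2\nu^2$ and $\|S_b(u)\|_{L^2(C'B_j)}\|\tilde{N}_a(u)\|_{L^2(C'B_j)} \leq Cr_j^{n-1}\gamma^2\nu^2$ (the latter by Cauchy--Schwarz from the third defining condition of $E$), together with the corkscrew bound above, one obtains $|E \cap B_j| \leq C(\gamma)|B_j|$ with $C(\gamma) \to 0$ as $\gamma \to 0$.

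The main obstacle, in my view, is the change-of-variables step that passes from $w(\hbar(\cdot),\cdot)$ to $u(\theta_0 \hbar(\cdot),\cdot)$, because $\hbar_{\nu,a}$ couples the height and the horizontal variables in a non-trivial way. Its execution requires careful Fubini and substitution arguments that exploit the fact that $\operatorname{Lip}(\hbar_{\nu,a}) \leq 1/a$ is small, which is why the parameter $a$ must be taken sufficiently large.
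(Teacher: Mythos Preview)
Your overall architecture---Whitney decomposition of $\{\tilde N_a(u)>\nu/32\}$, the nearby point $x_j^*$ in the complement to control $\hbar_{\nu,a}$ on $B_j$, Corollary~\ref{S3:L6} followed by the weak-type bound for $M_\hbar$, then Lemma~\ref{S3:L8}---matches the paper's.

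The genuine gap is in your treatment of the corkscrew term. You plan to bound $|u_{\rm av}(A_{r_j})|\le w(A_{r_j})\le\nu/32$ (note: $\le$, not $=$) and absorb the resulting $Cr_j^{n-1}(\nu/32)^2$ directly. After dividing by $\nu^2$ this contributes a term of size $C|B_j|/1024$ to your estimate of $|E\cap B_j|$, a \emph{fixed} constant that does not tend to $0$ as $\gamma\to 0$. Hence your argument only yields $|E\cap B_j|\le (C_0+C'(\gamma))|B_j|$ with $C_0>0$ independent of $\gamma$, contradicting the required $C(\gamma)\to 0$. The paper's remedy is to subtract: set $\tilde u:=u-u_{\rm av}(q)$ at the corkscrew point $q$. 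Then $\tilde u$ still solves the system, $\tilde u_{\rm av}(q)=0$, and the corkscrew term in Lemma~\ref{S3:L8} applied to $\tilde u$ vanishes outright. One runs Corollary~\ref{S3:L6} for $\tilde u$ (using $|u_{\rm av}(q)|\le\nu/16$ to verify $N_a^{2r}(\tilde w)>\nu/2$ on $F_\nu^i$), and on the right-hand side of Lemma~\ref{S3:L8} the factor $\|\tilde N_a(\tilde u)\|$ is bounded by $\|\tilde N_a(u)\|+C\nu|B_j|^{1/2}$, which is exactly what brings in the mixed condition $(M(S_b^2)M(\tilde N_a^2))^{1/4}\le\gamma\nu$ together with $(M(S_b^2))^{1/2}\le\gamma\nu$.

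A secondary remark: the passage from $\int w^2(\hbar(\cdot),\cdot)$ to $\int\!\!\int |u(\theta\hbar(\cdot),\cdot)|^2\,d\theta$ is done in the paper via a dyadic slicing on the level sets $\{2^{i-1}\le\hbar<2^i\}$ (Lemma~\ref{Lw-u}), not by the bi-Lipschitz substitution you sketch; your substitution $z'=y'+\hbar(y')\xi'$ involves $\hbar$ nonlinearly in both arguments and would need more care than you indicate. Finally, the paper integrates Lemma~\ref{S3:L8} over $\theta\in[1/6,6]$ rather than selecting a single $\theta_0$, but your mean-value selection works equally well since the right-hand side of Lemma~\ref{S3:L8} is uniform in $\theta$.
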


\begin{proof} 
For starters, observe that $\big\{x'\in{\BBR}^{n-1}:\,\tilde{N}_a(u)(x')>\nu/32\}$ is an open 
subset of ${\BBR}^{n-1}$. When this set is empty or the entire Euclidean ambient, 
estimate \eqref{eq:gl} is trivial, so we focus on the case when the set in question is 
both nonempty and proper. Granted this, we may consider a Whitney decomposition $(\Delta_i)_{i\in I}$ 
of it, consisting of open cubes in ${\mathbb{R}}^{n-1}$. Let $F_\nu^i$ be the set appearing on the 
left-hand side of \eqref{eq:gl} intersected with $\Delta_i$. We may streamline the index set $I$ 
by retaining only those $i$'s for which $F_\nu^i\neq\varnothing$. Let $B_i$ be a ball 
of radius $r_i$ in $\BBR^n$ such that $\Delta_i\subset B_i\cap \{x_0=0\}$ and there 
exists a point $p'\in 2B_i\cap \partial\BBR^n_{+}$ with $\tilde{N}_a(u)(p')=N_a(w)(p')\leq\nu/32$. 
The existence of such point $p'$ is guaranteed by the very nature of the Whitney decomposition. 
Indeed, there exists a point near $\Delta_i$ not contained in the set 
$\{x'\in{\BBR}^{n-1}:\,\tilde{N}_a(u)(x')>\nu/32\}$.
 
This clearly implies that $w(z)\le\nu/32$ for all $z\in\Gamma_a(p')$. In particular, for all $x'\in \Delta_i$ 
we have $w(z)\le\nu/32$ for all $z\in\Gamma_a(x')\cap\Gamma_a(p')$, so we focus on estimating the size 
of $w(z)$ for $z\in\Gamma_a(x')\setminus\Gamma_a(p')$ with $z_0\geq 2r$. 
Since we also assume that for at least one $x'\in\Delta_i$ we have $M(S^2_b(u))(x')\leq(\gamma\nu)^2$, 
by the same type of estimates established in the proof of Lemma~\ref{l6} (cf. \eqref{Daver3} in particular) 
we may conclude that for sufficiently small $\gamma>0$ we have that for any $z\in\Gamma_a(x')$ with $z_0\ge 2r$ 
there is a point $\tilde{z}\in\Gamma_a(p')$ with 
\begin{equation}\label{Eqqq-33}
|z-\tilde{z}|\leq Cr_i\,\,\text{ and }\,\,|w(z)-w(\tilde{z})|\leq\nu/32.
\end{equation}
It follows that for all such $z$ we have $w(z)\le\nu/16$. Hence for all $x'\in \Delta_i$ we have
\begin{equation}\label{Eqqq-34}
\nu<\tilde{N}_a(u)(x')={N}_a(w)(x')=N_a^{2r}(w)(x'),
\end{equation}
where  $N_a^{2r}$ is the truncated nontangential maximal function at height $2r$. In particular this also implies
\begin{equation}\label{eq:hbound}
\hbar_{\nu,a}(w)\leq 2r_i\,\,\text{ pointwise on }\,\,\Delta_i.
\end{equation}

Let us also note that we can find a point $q$ (specifically, a corkscrew point for $12\Delta_i$) 
with distance to $\Delta_i$ and the boundary equal to $12r_i$ such that $w(q)\leq\nu/16$. 
When $h\lesssim r_i$ since $u$ vanishes above height $h$ and we might actually take $q$ such that $w(q)=0$.

As $w$ is the $L^2$ average of $|u|$, in terms of $u_{\rm av}(q)=\dint_{B_{\delta(q))/2}(q)}u(z)\,dz$
the latter estimate gives
\begin{equation}\label{Eqqq-35}
|u_{\rm av}(q)|\leq w(q)\le\nu/16.
\end{equation}

Next, consider $\tilde{u}:=u-u_{\rm av}(q)$. (For $h\lesssim r_i$ this is just $u$ as  $u_{\rm av}(q)=0$). Then $\mathcal{L}\tilde{u}=0$, hence $\tilde{u}$ still 
solves the system \eqref{E:D-strip} and $\tilde{u}_{\rm av}({q})=0$. Denote by $\tilde{w}$ the $L^2$ 
averages of $|\tilde u|$. For all $x'\in F^i_{\nu}$ we have
\begin{equation}\label{Eqqq-36}
N_a^{2r}(\tilde{w})(x')\geq N_a^{2r}({w})(x')-|u({q})|\geq\nu-\nu/16>\nu/2.
\end{equation}
With $\hbar:=\hbar_{\nu,a}(w)$ and for $M_\hbar$ defined on the graph of $\hbar$ 
in Corollary~\ref{S3:L6} we see that Corollary~\ref{S3:L6} 
applied to $\tilde{u}$ implies\footnote{Technically $\tilde{u}\in W^{1,2}_{\rm loc}(\Omega)$ 
is not an energy solution, but in the proof the smallness of the solution is only need above 
a certain distance from the boundary. In our case we obviously have 
$\tilde{w}(z)\leq w(z)+|u({q})|\leq\nu/8$ for points $z$ whose distance to the boundary 
exceeds $2r_i$ which suffices for our purposes.}  
\begin{equation}\label{Eqqq-37}
M_\hbar\left(\tilde{w}\chi_{4B_i}\right)\big(\hbar(x'),x'\big)\geq C(n)\nu.
\end{equation}
Here we are allowed to apply the cutoff function $\chi_{4B_i}$ since values of 
$\tilde{w}$ are small above the height $2r$, hence this places a bound on the distance 
and the diameter of the boundary ball $R$ constructed in Corollary~\ref{S3:L6} from the 
point $x'$ (both are bounded by $\lesssim r_i$). Thus, by the maximal function theorem
\begin{align}\label{UHB}
|F_\nu^i| &\leq\frac{C}{\nu^2}\int_{4\Delta_i}\big(M_\hbar(\tilde{w}\chi_{4B_i})\big)^{2}\big(\hbar(x'),x'\big)\,dx'
\nonumber\\[4pt]
&\leq\frac{C}{\nu^2}\int_{4\Delta_i}\tilde{w}^2(\hbar(x'),x')\,dx'.
\end{align}

At this stage, we bring in the following lemma. 

\begin{lemma}\label{Lw-u} 
For any surface ball $\Delta$, if $a>0$ and $\hbar=\hbar_{\nu,a}(w)$ then 
\begin{equation}\label{Eqqq-38}
\int_{\Delta}\tilde{w}^2(\hbar(x'),x')\,dx'\leq C\int_{1/6}^{6}\int_{3\Delta} 
\big|\tilde{u}(\theta \hbar(x'),x')\big|^2\,dx'\,d\theta.
\end{equation}
\end{lemma}

Accepting for the moment this lemma, whose proof we postpone for a later occasion, 
we have (taking $a>0$ as in Lemma~\ref{S3:L8})
\begin{equation}\label{eq:Fnu}
|F_\nu^i|\leq\frac{C}{\nu^2}\int_{1/6}^{6}\int_{12\Delta_i}\big|\tilde{u}(\theta \hbar(x'),x')\big|^2\,dx'\,d\theta.
\end{equation}
For each $\theta$, we apply the conclusion in Lemma~\ref{S3:L8} (in the version recorded in the very last 
part of its statement) to the solution $\tilde{u}$. This gives
\begin{align}\label{NEWEST}
&\hskip -0.20in
\int_{12\Delta_i}|\tilde{u}(\theta \hbar(x'),x')|^2\,dx' 
\nonumber\\[4pt]
&\hskip 0.20in
\leq C(1+\|\mu\|^{1/2}_{\mathcal C})\|S_{b}({u})\|_{L^2(24\Delta_i)}
\|{N}_{a}(\tilde{w})\|_{L^2(24\Delta_i)}
\nonumber\\[4pt]
&\hskip 0.20in
\quad+C\|S_{b}(u)\|^2_{L^2(24\Delta_i)}+Cr^{n-1}|\tilde{u}_{\rm av}(q)|^2
\nonumber\\[4pt]
&\hskip 0.20in
\leq C(1+\|\mu\|^{1/2}_{\mathcal C})\|S_{b}(u)\|_{L^2(24\Delta_i)}
\|{N}_{a}({w}+w({q}))\|_{L^2(24\Delta_i)}
\nonumber\\[4pt]
&\hskip 0.20in
\quad+C\|S_{b}(u)\|^2_{L^2(24\Delta_i)}.
\end{align}
Observe that we have 
dropped the term $Cr^{n-1}|\tilde{u}_{\rm av}(q)|^2$ as we have arranged previously that 
$\tilde{u}_{\rm av}(q)=0$. Since $F_\nu^i\neq\varnothing$ and $|w({q})|\leq\nu/16$ the 
term in the penultimate line of \eqref{NEWEST} may be bounded by
\begin{align}
&\hskip -0.20in
C|24\Delta_i|\left(\dint_{24\Delta_i} S_b^2(u)dx'\right)^{1/2}
\left[\left(\dint_{24\Delta_i} N_a^2(u)dx'\right)^{1/2}+\frac\nu{16}\right]
\nonumber\\[4pt]
&\hskip 0.20in
\leq C|24\Delta_i|\left[\big(M(S^2_b(u))(x')M(\tilde{N}_a^2(u))(x')\big)^{1/2} 
+\frac\nu{16}M\big(S^2_b(u)\big)(x')^{1/2}\right]
\nonumber\\[4pt]
&\hskip 0.20in
\leq C|24\Delta_i|(\gamma^2+\gamma/16)\nu^2=C(\gamma)|\Delta_i|\nu ^2.
\end{align}
Here $x'\in F_\nu^i$ is a point where we use the assumptions for the set on 
the left-hand side of \eqref{eq:gl}. Also, we have used that $|24\Delta_i|\lesssim|\Delta_i|$ by the 
doubling property of the Lebesgue measure. The estimate for the very last term of \eqref{NEWEST} is analogous. 
By design, we have $C(\gamma)\to 0$ as $\gamma\to 0$. Using this back in \eqref{eq:Fnu} we obtain
\begin{equation}\label{Eqqq-39}
|F_\nu^i|\leq C'(\gamma)|\Delta_i|.
\end{equation}
Summing over all $i$ we obtain \eqref{eq:gl}, as desired.
\end{proof}

\noindent At this stage, it remains to prove Lemma~\ref{Lw-u}.

\begin{proof}
Write $\BBR^{n-1}=\bigcup_{i\in\mathbb{Z}}\Delta^i$ where, for each $i$,  
\begin{equation}\label{Eqqq-40}
\Delta^{i}:=\big\{x'\in{\BBR}^{n-1}:\,2^{i-1}\leq \hbar(x')<2^{i}\big\}.
\end{equation}
Consider $y=(y_0,y')\in B_{\hbar(x')/2}(\hbar(x'),x')$ for $x'\in\Delta^i\cap\Delta$. Then 
\begin{equation}\label{Eqqq-41}
|y_0-h(x')|\leq \hbar(x')/2\,\,\text{ and }\,\,|x'-y'|\leq \hbar(x')/2.
\end{equation}
The goal is to estimate $\hbar(y')$. Since $h=\hbar_{\nu,a}(w)$ is a Lipschitz function 
with Lipschitz constant $1/a<1$ (cf. Lemma~\ref{S3:L5}) we have 
\begin{equation}\label{Eqqq-42}
\hbar(y')\geq \hbar(x')-|\hbar(x')-\hbar(y')|\geq \hbar(x')-|x'-y'|>\frac{\hbar(x')}{2}
\end{equation}
and
\begin{equation}\label{Eqqq-43}
\hbar(y')\leq \hbar(x')+|\hbar(x')-\hbar(y')|\leq \hbar(x')+|x'-y'|<3\hbar(x')/2.
\end{equation}
It follows that if $\mathcal{O}:=\bigcup_{x'\in\Delta^i\cap\Delta}B_{\hbar(x')/2}(\hbar(x'),x')$ then
\begin{equation}\label{Eqqq-44}
(y_0,y')\in\mathcal{O}\,\Longrightarrow\,
\left\{
\begin{array}{l}
y'\in\widetilde{\Delta^{i}}:=\Delta^{i-1}\cup\Delta^{i}\cup\Delta^{i+1},
\\[4pt]
y'\in3\Delta,
\\[4pt]
y_0\in [2^{i-2},3\cdot2^{i-1}).
\end{array}
\right.
\end{equation}
The fact that $y'\in 3\Delta$ follows from \eqref{eq:hbound}. Hence we have
\begin{align}\label{eq:estos}
&\hskip -0.20in
\int_{\Delta^{i}\cap\Delta}\tilde{w}^2(\hbar(x'),x')\,dx'
\nonumber\\[4pt]
&\hskip 0.20in
=\int_{x'\in \Delta^{i}\cap\Delta}\dint_{B_{h(x')/2}(\hbar(x'),x')}|\tilde{u}(z)|^2\,dz\,dx'
\nonumber\\[4pt]
&\hskip 0.20in
\leq C2^{-in}\int_{x'\in\Delta^{i}\cap\Delta}\int_{B_{\hbar(x')/2}(h(x'),x')}|\tilde{u}(z)|^2\,dz\,dx'
\nonumber\\[4pt]
&\hskip 0.20in
=C2^{-in}\iint_{\mathcal O}|\tilde{u}(z)|^2\left|\{x'\in\Delta^i\cap\Delta:\,z\in B_{\hbar(x')/2}(\hbar(x'),x')\}\right|\,dz,
\end{align}
where in the last step we have interchanged the order of integration. For a fixed $z\in\mathcal{O}$ we have
\begin{align}\label{Eqqq-45}
&\hskip -0.20in
\left|\{x'\in\Delta^i\cap\Delta:\,z\in B_{\hbar(x')/2}(\hbar(x'),x')\}\right|
\nonumber\\[4pt]
&\hskip 0.50in
\leq\left|\{x'\in{\BBR}^{n-1}:\,z\in B_{\hbar(x')/2}(\hbar(x'),x')\}\right|.
\end{align}
Since for such $z=(z_0,z')$ we have $z_0\in [2^{i-2},3\cdot 2^{i-1})$ and 
\begin{equation}\label{Eqqq-46}
\frac{\hbar(x')}{2}<z_0<\frac{3\hbar(x')}{2}\,\Longrightarrow\,\hbar(x')\in(2z_0/3,2z_0)\subset(2^{i-1}/3,3\cdot 2^{i}).
\end{equation}
From this we then conclude 
\begin{equation}\label{Eqqq-47}
\{x'\in{\BBR}^{n-1}:\,z\in B_{h(x')/2}(h(x'),x')\}\subset\{x'\in{\BBR}^{n-1}:\,|x'-z'|<2^{i+2}\}
\end{equation}
hence, further, 
\begin{equation}\label{Eqqq-48}
\left|\{x'\in\Delta^i\cap\Delta:\,z\in B_{\hbar(x')/2}(\hbar(x'),x')\}\right|\leq C2^{i(n-1)}.
\end{equation}
Using this back in \eqref{eq:estos} then yields
\begin{align}\label{eq:estos2}
\int_{\Delta^{i}\cap\Delta}\tilde{w}^2(\hbar(x'),x')\,dx' &\leq C2^{-i}\iint_{\mathcal{O}}|\tilde{u}(z)|^2\,dz
\nonumber\\[4pt]
&\leq C\int_{z'\in\mathcal{P}(\mathcal{O})}\dint_{z_0\in(2^{i-2},3\cdot 2^{i-1})}|\tilde{u}(z)|^2\,dz_0\,dz',
\end{align}
where (with $\widetilde{\Delta^{i}}$ as in \eqref{Eqqq-44})  
\begin{equation}\label{Eqqq-49}
\mathcal{P}(\mathcal{O}):=\{z'\in{\BBR}^{n-1}:\,\exists\,z_0\,\text{ such that }\,(z_0,z')\in\mathcal{O}\}
\subset\widetilde{\Delta^{i}}\cap 3\Delta.
\end{equation}
Clearly since for $z'\in \mathcal{P}(\mathcal{O})$ we have $\hbar(z')\in [2^{i-2},3\cdot 2^{i-1})$ and, 
therefore,
\begin{equation}\label{Eqqq-50}
(2^{i-2}, 3\cdot 2^{i-1})\subset(\hbar(z')/6,6\hbar(z')).
\end{equation}
Hence \eqref{eq:estos2} may be also written as
\begin{equation}\label{eq:estos3}
\int_{\Delta^{i}\cap \Delta} \tilde{w}^2(\hbar(x'),x')\,dx'
\leq C\int_{\widetilde{\Delta^{i}}\cap 3\Delta}\int_{1/6}^6|\tilde{u}(\theta \hbar(z'),z')|^2\,d\theta\,dz'.
\end{equation}
By interchanging the order of integration and then summing over all $i\in\mathbb Z$ we arrive at 
\begin{align}\label{eq:estos4}
&\hskip -0.20in
\int_{\Delta}\tilde{w}^2(h(x'),x')\,dx'
\nonumber\\[4pt]
&\hskip 0.20in
\leq C\int_{1/6}^6\sum_i\int_{\widetilde{\Delta^{i}}\cap 3\Delta} 
|\tilde{u}(\theta \hbar(z'),z')|^2\,dz'\,d\theta
\nonumber\\[4pt]
&\hskip 0.20in
=C\int_{1/6}^6\sum_i\left(\int_{\Delta^{i-1}\cap 3\Delta}
+\int_{\Delta^{i}\cap 3\Delta}+\int_{\Delta^{i+1}\cap 3\Delta}\right) 
|\tilde{u}(\theta \hbar(z'),z')|^2\,dz'\,d\theta
\nonumber\\[4pt]
&\hskip 0.20in
=3C\int_{1/6}^6\sum_i\int_{\Delta^{i}\cap 3\Delta}|\tilde{u}(\theta \hbar(z'),z')|^2\,dz'\,d\theta
\nonumber\\[4pt]
&\hskip 0.20in
=3C\int_{1/6}^6\int_{3\Delta}|\tilde{u}(\theta \hbar(z'),z')|^2\,dz'\,d\theta,
\end{align}
as wanted. This finishes the proof of Lemma~\ref{Lw-u} and completes the proof of
Lemma~\ref{LGL}.  
\end{proof}

Lemma \ref{LGL} has a localized version on any boundary ball $\Delta_d\subset {\mathbb R}^{n-1}$. 

\begin{lemma}\label{LGL-loc} 
Consider the system \eqref{E:D} with coefficients satisfying the Carleson condition and \eqref{EllipLH} 
in $\BBR^n_{+}$.  Consider any boundary ball $\Delta_d=\Delta_d(Q)\subset {\mathbb R}^{n-1}$, 
let $A_d=(d/2,Q)$ be its corkscrew point, and let
\begin{equation}\label{TX!!!}
\nu_0=\left(\dint_{B_{d/4}(A_d)}|u(z)|^2\,dz\right)^{1/2}.
\end{equation}

Then for each $\gamma\in(0,1)$ there exists a constant $C(\gamma)>0$ 
such that $C(\gamma)\to 0$ as $\gamma\to 0$ and with the property that for each $\nu>2\nu_0$ and 
each energy solution $u$ of \eqref{E:D} there holds 
\begin{align}\label{eq:gl2}
&\hskip -0.20in 
\Big|\Big\{x'\in {\BBR}^{n-1}:\,\tilde{N}_a(u\chi_{T(\Delta_d)})>\nu,\,(M(S^2_b(u)))^{1/2}\leq\gamma\nu,
\nonumber\\[4pt] 
&\hskip 0in
\big(M(S^2_b(u))M(\tilde{N}_a^2(u\chi_{T(\Delta_d)}))\big)^{1/4}\leq\gamma\nu\Big\}\Big|
\nonumber\\[4pt] 
&\hskip 0.50in
\quad\le C(\gamma)\left|\big\{x'\in{\BBR}^{n-1}:\,\tilde{N}_a(u\chi_{T(\Delta_d)})(x')>\nu/32\big\}\right|.
\end{align}
Here $\chi_{T(\Delta_d)}$ is the indicator function of the Carleson region $T(\Delta_d)$ and the square function
$S_b$ in \eqref{eq:gl2} is truncated at the height $2d$. Similarly, the Hardy-Littlewood maximal operator $M$
is only considered over all balls $\Delta'\subset\Delta_{md}$ for some enlargement constant $m=m(a)\ge 2$.
\end{lemma}

\begin{proof}
The proof is similar to Lemma \ref{LGL}, hence we only point out the main differences introduced by 
considering $\tilde{N}$ of $u\chi_{T(\Delta_d)}$ instead of $u$. Let $w_1$ be the $L^2$ averages of 
$u\chi_{T(\Delta_d)}$ instead of $u$. If we consider $\hbar_{\nu,a}(w_1)$ as in \eqref{h}, then Lemma~\ref{S3:L5} 
holds for $\hbar_{\nu,a}(w_1)$ as before. 

Because $u\chi_{T(\Delta_d)}=0$ outside $T(\Delta_d)$, it follows that $w_1$ vanishes outside $T(\Delta_{2d})$.
As such, it follows that there exists a number $m=m(a)>1$ with the property that
\begin{equation}\label{h-est}
N_a(w_1)=0\mbox{ on }{\mathbb R}^{n-1}\setminus\Delta_{md}\quad\mbox{and}\quad 
\sup_{{\mathbb R}^{n-1}}\hbar_{\nu,a}(w_1)=\sup_{\Delta_d}\hbar_{\nu,a}(w_1).
\end{equation}

We only consider $\nu>2\nu_0$. We claim that for such choice of $\nu$, Lemma~\ref{l6}, hence also Corollary \ref{S3:L6}, 
remain valid and only require minor changes, which we outline below. Start by choosing $b=b(a)>0$ such that whenever 
$x'\in\Delta_{md}$ we have
$$
\left[{d}/{48},d\right]\times \Delta_d \subset \Gamma_b(x').
$$
Granted this, \eqref{Daver3} implies 
$$
|w(A_d)-w(y_0,y')|^2 \lesssim C\gamma^2\nu^2
\,\,\text{ for all }\,\,(y_0,y')\in\left[{d}/{24},2d\right]\times\Delta_d.
$$
Here, as before, $w$ denotes the $L^2$ averages of un-truncated function $u$. 
Choose $\gamma\le\gamma_0$ where $C\gamma_0^2=1/4$. Since $w(A_d)=w_1(A_d)$, we therefore obtain a one-sided estimate
$$
|w_1(y_0,y')|\le|w(y_0,y')|\le|w(A_d)|+|w(y_0,y')-w(A_d)|<\nu/2+\nu/2=\nu.
$$
In particular, this implies that on $\Delta_d$ we have $\hbar_{\nu,a}(w_1)\le d/24$ hence, 
thanks to \eqref{h-est}, it follows that $\hbar_{\nu,a}(w_1)\le d/24$ everywhere.

With this at our disposal, the proof of Lemma~\ref{l6} only requires one other minor modification. 
Again, find a point $y=(y_0,y')\in\partial\Gamma_a(x_0,x')$ such that $w_1(y)=\nu$, and define $R$ as before.
Consider the sub-region $R'$ of $R$ defined as 
$$
R'=\big\{z'\in R:\,|B_{z_0/2}(z_0,z)\cap T(\Delta_d)|
\ge |B_{y_0/2}(y_0,y)\cap T(\Delta_d)|/2\mbox{ for all }(z_0,z')\in\mathcal O\big\}.
$$
Simple geometric considerations dictate that $R\subset 4R'$. Now repeating the calculation \eqref{eqFC}
for any pair of points $z_1\in B_{z_0/2}(z_0,z')\cap T(\Delta_d)$ and $z_2\in B_{y_0/2}(y_0,y')\cap T(\Delta_d)$
we obtain a bound from below on the size of $w_1(z_0,z)$ it terms of $w_1(y_0,y')$ (it is a calculation similar to 
\eqref{Daver2} but slightly trickier, as the sets $B_{z_0/2}(z_0,z)\cap T(\Delta_d)$ and $B_{y_0/2}(y_0,y)\cap T(\Delta_d)$ 
are not necessary balls any more). We obtain
$$
w_1(z_0,z)\ge w_1(y_0,y')/2-C(a,n,N)\gamma\nu>\nu/2-\nu/4=\nu/4,
$$
for $\gamma$ chosen such that $C(a,n,N)\gamma<1/4$. It follows that Lemma~\ref{l6} 
holds for $w_1$ with $R'$ replacing $R$ and with slightly weaker conclusions, namely 
$x'\in 8R'$ and 
$$
|w_1(\hbar_{\nu,a}(w_1)(z'),z')|>\nu/4\,\,\,\mbox{ for all }\,\,z'\in R',
$$
in place of \eqref{Eqqq-18}. However, this is still sufficient to conclude that 
Corollary~\ref{S3:L6} holds for $w_1$ as well. 

We now look at Lemma~\ref{S3:L8} and, in particular, the place it is actually employed in the proof of the 
good-$\lambda$ inequality in Lemma~\ref{LGL}. Recall that we apply this lemma in one place only, namely the 
estimate in $\eqref{eq:Fnu}$, where $\Delta_i$ are Whitney cubes. Hence, we might as well arrange that the 
balls $\Delta_r$ we consider in Lemma \ref{S3:L8} are from a dyadic grid in ${\mathbb R}^{n-1}$. Similarly, 
in the claim of Lemma~\ref{LGL-loc} it suffices to consider $\Delta_d$ dyadic. 

Hence, whenever $\Delta_r\cap \Delta_d\ne\varnothing$ then either $\Delta_r\subset\Delta_d$, 
or $\Delta_d\subset\Delta_r$. If $\Delta_d\subset\Delta_r$ then clearly if we prove the claim of Lemma~\ref{S3:L8}
for $\Delta_d$ and function $u\chi_{T(\Delta_d)}$ then this will also hold for the larger ball $\Delta_r$ as the 
left-hand side of \eqref{TTBBMM} vanishes outside $\Delta_d$. The terms in the right-hand side will be bigger 
or comparable if we replace $\Delta_d$ by $\Delta_r$ there. This is also true for the last term in \eqref{TTBBMM}
because although we have $\Delta_d\subset\Delta_r$ we must have $r\approx d$. This is due to the fact that
$\Delta_r$ comes from Whitney decomposition of the set $\{\tilde{N}(w_1)>\nu/32\}\subset \Delta_{md}$, 
implying the inequality $r\lesssim d$.

Hence, it suffices to consider $\Delta_r\subset\Delta_d$, or $\Delta_r\cap\Delta_d=\varnothing$ 
in Lemma~\ref{S3:L8}. We consider these two cases separately below. 

\begin{itemize}
\item Assume $\Delta_r\cap\Delta_d=\varnothing$. Then Lemma~\ref{S3:L8} hold trivially for $u\chi_{T(\Delta_d)}$,
as the function vanishes on $\Delta_r$.

\item Assume $\Delta_r\subset\Delta_d$. We have already established above that $\hbar_{a,\nu}\le d/24$, 
therefore $\theta \hbar_{a,\nu}\le d/4$. It follows that all terms in \eqref{TTBBMM} are either the 
same, or comparable, when $u$ is replaced by $u\chi_{T(\Delta_d)}$ in the left-hand side of \eqref{TTBBMM} and 
in the term $\tilde{N}_a$, as the functions $u$ and $u\chi_{T(\Delta_d)}$ coincide in $\Delta_r\times (0,d)$. 
Also, clearly, the estimate in \eqref{TTBBMM} only requires truncated versions of $S_b$ and $\tilde{N}$.
\end{itemize}

Therefore, we may employ Lemma~\ref{S3:L8} to prove Lemma~\ref{LGL-loc} the same way as we did in the case of 
Lemma~\ref{LGL}. This shows that the local good-$\lambda$ inequality \eqref{eq:gl2} holds.
\end{proof}

Finally we have the following.

\begin{proposition}\label{S3:C7} 
Assume the coefficients satisfy the Legendre-Hadamard condition \eqref{EllipLH}
and suppose the measure $\mu$ defined as in \eqref{Car_hatAA} is Carleson in $\Omega=\BBR^n_{+}$. 
The for any $p>0$ and $a>0$ there exists an integer $m=m(a)\ge 2$ and a finite constant 
$C=C(n,N,\lambda,\Lambda,p,a,\|\mu\|_{\mathcal C})>0$ such that for any energy solution $u$ 
of \eqref{E:D-strip} in $\Omega$ and any surface ball $\Delta_d\subset{\mathbb R}^{n-1}$ we have
\begin{equation}\label{S3:C7:E00ooloc}
\|\tilde{N}^r_a(u)\|_{L^{p}(\Delta_d)}\le C\|S^{2r}_a(u)\|_{L^{p}(\Delta_{md})}+Cd^{(n-1)/p}|u_{av}(A_d)|,
\end{equation}
where $A_d$ denotes the corkscrew point of the ball $\Delta_d$, and $u_{av}$ is as in \eqref{Eqqq-26}.

Moreover, a global estimate is also valid. Specifically, for any $p>0$ and $a>0$ there exists a finite 
constant $C=C(n,N,\lambda,\Lambda,p,a,\|\mu\|_{\mathcal C})>0$ such that 
\begin{equation}\label{S3:C7:E00oo}
\|\tilde{N}_a(u)\|_{L^{p}({\BBR}^{n-1})}\le C\|S_a(u)\|_{L^{p}({\BBR}^{n-1})}.
\end{equation}
\end{proposition}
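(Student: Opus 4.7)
\textbf{Proof plan for Proposition~\ref{S3:C7}.} The strategy is the classical Fefferman--Stein good-$\lambda$ scheme applied to the local good-$\lambda$ inequality of Lemma~\ref{LGL-loc}, followed by a limiting argument to obtain the global estimate. I would first establish the local bound \eqref{S3:C7:E00ooloc} and then derive \eqref{S3:C7:E00oo} by sending $d\to\infty$.

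Fix a ball $\Delta_d$ with corkscrew point $A_d$, and set $\nu_0:=(\dint_{B_{d/4}(A_d)}|u|^{2})^{1/2}$. I would split the layer-cake decomposition
\begin{equation*}
\|\tilde N_{a}^{r}(u)\|_{L^{p}(\Delta_d)}^{p}=\int_{0}^{\infty}p\nu^{p-1}\bigl|\{x'\in\Delta_d:\tilde N_{a}^{r}(u)(x')>\nu\}\bigr|\,d\nu
\end{equation*}
at the threshold $\nu=2\nu_{0}$. The contribution from $\nu\leq 2\nu_{0}$ is bounded trivially by $C\nu_{0}^{p}|\Delta_d|\lesssim d^{n-1}\nu_{0}^{p}$; by Poincar\'e's inequality combined with Corollary~\ref{S5:C3} one has $\nu_{0}\lesssim |u_{\mathrm{av}}(A_d)|+d^{-(n-1)/p}\|S_{a}^{2r}(u)\|_{L^{p}(\Delta_{md})}$, yielding precisely the two terms on the right of \eqref{S3:C7:E00ooloc}.

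For $\nu>2\nu_{0}$ I would apply Lemma~\ref{LGL-loc} to $u\chi_{T(\Delta_d)}$ (which equals $u$ on the relevant truncated cones thanks to $\hbar_{\nu,a}\leq d/24$, as obtained inside the proof of that lemma). Letting $I_N$ denote the truncated layer integral up to level $N$, one gets
\begin{equation*}
I_{N}\leq \gamma^{-p}\|(MS_{b}^{2}(u))^{1/2}\|_{L^{p}(\Delta_{md})}^{p}+\gamma^{-p}\|(MS_{b}^{2}(u)\cdot M\tilde N_{a}^{2}(u))^{1/4}\|_{L^{p}(\Delta_{md})}^{p}+C(\gamma)32^{p}I_{N}.
\end{equation*}
For $p>2$, the Hardy--Littlewood theorem on $L^{p/2}$ controls the first term by $C\gamma^{-p}\|S_{b}(u)\|_{L^{p}(\Delta_{md})}^{p}$, while Cauchy--Schwarz in $L^{p/2}$ followed by Young's inequality handles the cross term, yielding $\varepsilon\|\tilde N_{a}(u)\|_{L^{p}(\Delta_{md})}^{p}+C_{\varepsilon}\|S_{b}(u)\|_{L^{p}(\Delta_{md})}^{p}$. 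Choosing $\gamma$ small (so that $C(\gamma)32^{p}$ is small by the hypothesis $C(\gamma)\to 0$) and $\varepsilon$ small enough, both $\tilde N_{a}(u)$ contributions on the right are absorbed. A change of aperture argument (standard since $a$ and $b$ differ only by a fixed factor) replaces $S_{b}$ by $S_{a}$ at the end. The initial finiteness of $I_{N}$ is guaranteed by the fact that $u=u_{h}$ vanishes above height $h$, so $\tilde N_{a}$ of $u\chi_{T(\Delta_{d})}$ is bounded.

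The main obstacles are twofold. First, for $p\leq 2$ the direct Hardy--Littlewood step on $L^{p/2}$ breaks down; the remedy is the Fefferman--Stein device of \cite{FSt}, exploiting that $MS_{b}^{2}(u)$ and $M\tilde N_{a}^{2}(u)$ appear only locally on $\Delta_{md}$, so one may carry out a dyadic-cube (or sharp-function) refinement of the above argument which avoids passing through $L^{p/2}$ boundedness of $M$. Second, deducing the global estimate \eqref{S3:C7:E00oo} from \eqref{S3:C7:E00ooloc}: letting $d\to\infty$, the term $d^{(n-1)/p}|u_{\mathrm{av}}(A_d)|$ tends to $0$ because the energy solution $u=u_{h}$ is supported in the strip $\Omega^{h}$, so Cauchy--Schwarz gives $|u_{\mathrm{av}}(A_d)|\lesssim \|u\|_{L^{2}(\Omega^{h})}d^{-n/2}$, which beats the $d^{(n-1)/p}$ growth (or else a complementary $S$-term controls the residue, again by Corollary~\ref{S5:C3}). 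This yields \eqref{S3:C7:E00oo} on $\mathbb{R}^{n-1}$.
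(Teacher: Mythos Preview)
Your plan is essentially the paper's own argument: run the layer-cake with the localized good-$\lambda$ inequality of Lemma~\ref{LGL-loc} to get the local estimate \eqref{S3:C7:E00ooloc} for $p>2$, then invoke the Fefferman--Stein real-variable machinery for $p\le 2$ and for the global statement \eqref{S3:C7:E00oo}. One notational slip to fix: in your absorption step the term you need to hide is $\varepsilon\|\tilde N_a(u\chi_{T(\Delta_d)})\|_{L^p}^p$, not $\varepsilon\|\tilde N_a(u)\|_{L^p(\Delta_{md})}^p$. The point of working with $u\chi_{T(\Delta_d)}$ in Lemma~\ref{LGL-loc} is precisely that $\tilde N_a(u\chi_{T(\Delta_d)})$ is supported in $\Delta_{md}$, so the same quantity appears on both sides of the integrated good-$\lambda$ and absorption is legitimate; as written, you would be trying to absorb an $L^p(\Delta_{md})$ norm of the untruncated $\tilde N_a(u)$ into an $L^p(\Delta_d)$ norm, which fails. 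With this correction your $p>2$ argument is the paper's.

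There is also a small difference in emphasis for $p\le 2$ and the global estimate. The paper does \emph{not} attempt to prove \eqref{S3:C7:E00ooloc} directly for $p\le 2$; it establishes the local bound only for $p>2$ and then appeals to the Fefferman--Stein argument in \cite{FSt}, which takes as input a local $L^q$ inequality on all balls (for some $q>p$) and outputs the $L^p$ bound for every $p>0$, simultaneously yielding the global estimate \eqref{S3:C7:E00oo}. Your proposed route---get local for all $p$, then send $d\to\infty$---would also work (indeed $u_{\rm av}(A_d)=0$ once $d>2h$ since $u$ vanishes above height $h$, so the corkscrew term drops out trivially), but your description of how to handle $p\le 2$ (``dyadic-cube or sharp-function refinement'') is too vague to count as a proof; the honest statement is that this is exactly the \cite{FSt} machinery, and the paper simply cites it.
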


\begin{proof} When $p>2$, the local estimate claimed in \eqref{S3:C7:E00ooloc} follows immediately after 
multiplying the good-$\lambda$ inequality \eqref{eq:gl2} by $\nu^{p-1}$ and integrating in $\nu$ 
over the interval $(2\nu_0,\infty)$. Note that the fact that the square function $S^{2r}_a$ is only 
integrated over some enlargement of $\Delta_d$ instead of the whole ${\mathbb R}^{n-1}$ follows from 
the fact that the set $\{x'\in{\BBR}^{n-1}:\,\tilde{N}_a(u\chi_{T(\Delta_d)})(x')>\nu/32\big\}$ in the 
right-hand side of \eqref{eq:gl2} is contained in a ball of diameter comparable to $\Delta_d$. 
For this reason, the maximal operators $M$ in \eqref{eq:gl2} may be restricted to such an enlarged ball $\Delta_{md}$. 

We do not quite obtain \eqref{S3:C7:E00ooloc}, as in the right-hand side we get 
\begin{equation}\label{S3:C7:E00ooloc2}
\|S^{2r}_a(u)\|_{L^{p}(\Delta_{md})}+d^{(n-1)/p}\left(\dint_{B_{d/4}(A_d)}|u(z)|^2dz\right)^{1/2},
\end{equation}
but then using Poincar\'e's inequality as in \eqref{Eqqq-32}-\eqref{y5VVV}, the second term above may be estimated as 
$$
d^{(n-1)/p}\left(\dint_{B_{d/4}(A_d)}|u(z)|^2dz\right)^{1/2} \lesssim  d^{(n-1)/p}|u_{av}(A_d)|
+\left(\int_{\Delta_{2d}}\left[S^{2r}_b(u)(Q)\right]^2\,dQ\right)^{1/2}.
$$
The argument proving \eqref{S3:C7:E00oo} for all $p>0$ may be found in \cite{FSt}.  
The local estimate \eqref{S3:C7:E00ooloc} for $p>2$ is the necessary ingredient for what 
is otherwise a purely abstract, real-variable argument. Further details can be found in \cite{FSt}.
\end{proof}

\section{$L^p$ Dirichlet problem for $p$ near $2$.}
\label{S-TX}

Following \cite{DK} we explore the extrapolation of solvability from $L^2$ to $L^p$ values of $p$ near $2$.
Consider first the extrapolation to values $p>2$. In this case we invoke Theorem 1.2 of \cite{S3} which 
establishes solvability of the $L^p$ Dirichlet problem for all $2<p<\frac{2(n-1)}{n-2}+\varepsilon$ 
for some small $\varepsilon>0$, provided $L^2$ Dirichlet problem is solvable and the boundary 
Cacciopoli inequality (cf. Proposition~\ref{caccioB}) holds. As we have already established both, 
the desired conclusion follows. 

\vglue1mm

We now turn to the case $2-\varepsilon<p<2$. Following the real variable argument of \cite{DKV2}
we work with two family of cones $\Gamma_b(\cdot)$ and $\Gamma_a(\cdot)$ with $b<a$ so that the cones 
$\Gamma_a(Q)$ contain $\overline{\Gamma_b(Q)}\setminus Q$. For ease of notation, introduce 
$$
m(x')=(\tilde{N}_b u)(x'),\qquad \overline{m}(x')=(\tilde{N}_a u)(x'),
$$
and for each $\nu>0$ define 
$$
F_{\nu}=\{x'\in{\mathbb R}^{n-1}:\,\overline{m}(x')\le\nu\}.
$$
Finally, let 
$$
\widetilde{F_\nu}=\bigcup_{Q\in F_\nu}\Gamma_a(Q).
$$
Clearly, the $L^2$ solvability result from Theorem \ref{S3:T1} applies to domain $\widetilde{F_\nu}$ 
as this is a domain with Lipschitz constant $1/a$. Since ${\mathcal L}$ satisfies the assumptions 
of this theorem in ${\mathbb R}^{n}_+$, it also satisfies similar assumptions in the domain $\widetilde{F_\nu}$, provided 
$1/a$ is sufficiently small. We fix $a>0$ for which we have such solvability. Theorem \ref{S3:T1} then implies the estimate
\begin{equation}\label{Main-EstYY}
\|\tilde{N}_{a/2} u\|_{L^{2}(\partial \widetilde{F_\nu})}
\leq C\|u\big|_{\partial \widetilde{F_\nu}}\|_{L^{2}(\partial \widetilde{F_\nu})},
\end{equation}
for all energy solutions $u$ of $\mathcal Lu=0$. The constant $C>0$ in the estimate above only 
depends on $a$. Here the nontangential maximal function $\tilde N$ must be taken with respect 
to nontangential approach regions that are contained inside ${\mathcal O}_{\Delta_d,a}$, that is, 
we need to take regions $\Gamma_b(\cdot)$ for any $b<a$. Without loss of generality, choose 
$b=a/2$ and fix it for the remaining portion of this section.

Based on \eqref{Main-EstYY} we conclude that
\begin{equation}
\int_{F_\nu}m^2(x')\,dx'\le C\int_{F_\nu}|f|^2dx'+C\int_{\partial \widetilde{F_\nu}\setminus F_\nu}u^2\,d\sigma
\end{equation}
where the second term can be estimated by $C\nu^2\sigma({\mathbb R}^{n-1}\setminus F_\nu)$ by averaging 
(we vary $a$ slightly to create a solid integral out of the last term) and using the definition of the set 
$\widetilde{F_\nu}$. Hence we have
\begin{equation}\label{P2-eq1}
\int_{F_\nu}m^2(x')\,dx'\le C\int_{F_\nu}f^2dx'+C\nu^2\sigma({\mathbb R}^{n-1}\setminus F_\nu).
\end{equation}
Now, as in \cite{DKV2}, we have
\begin{equation}\label{P2-eq2}
\int_{{\mathbb R}^{n-1}}m^{2-\varepsilon}(x')\,dx'
\le C\int_{{\mathbb R}^{n-1}}m^{2}(x')\overline{m}^{-\varepsilon}(x')\,dx',
\end{equation}
which follows from the fact that for any $\varepsilon\in(0,1)$ the function $M(m)^{\varepsilon}$ 
is a Muckenhoupt weight of class $A_1$. In particular, the said function is an $A_2$ weight, hence 
so is $M(m)^{-\varepsilon}$. Consequently, Muckenhoupt's theorem guarantees that the maximal operator 
is bounded on $L^2({\mathbb R}^{n-1},\,M(m)^{-\varepsilon}dx')$, hence
\begin{align}\label{TX.2}
\int_{{\mathbb R}^{n-1}}M(m)^2(x')M(m)^{-\varepsilon}(x')\,dx'
&\le\int_{{\mathbb R}^{n-1}}m^2(x')M(m)^{-\varepsilon}(x')\,dx'
\nonumber\\[6pt]
&\le\int_{{\mathbb R}^{n-1}}m^2(x')\overline{m}^{-\varepsilon}(x')\,dx',
\end{align}
where the last estimates uses the pointwise bound $\overline{m}(x')\le CM(m)(x')$.

In turn, \eqref{P2-eq2} implies that
\begin{align}\label{TX.3}
\int_{{\mathbb R}^{n-1}}m^{2-\varepsilon}(x')\,dx'
&\le C\int_{{\mathbb R}^{n-1}}m^{2}(x')\overline{m}^{-\varepsilon}(x')\,dx'
\nonumber\\[6pt]
&=\varepsilon\int_0^\infty\nu^{-1-\varepsilon}\left(\int_{\{x':\overline{m}(x')\le\nu\}}m^2(y')dy'\right)d\nu.
\end{align}
In concert with \eqref{P2-eq1} this further permits us to estimate
\begin{align}\label{TX.4}
\int_{{\mathbb R}^{n-1}}m^{2-\varepsilon}(x')\,dx'
&\le C\varepsilon\int_0^\infty\nu^{-1-\varepsilon}
\left(\int_{\{x':\overline{m}(x')\le\nu\}}|f|^2(y')dy'\right)d\nu
\nonumber\\[6pt]
&\quad+C\varepsilon\int_0^\infty\nu^{1-\varepsilon}\sigma(\{x':\overline{m}(x')>\nu\})d\nu
\nonumber\\[6pt]
&\le C\int_{{\mathbb R}^{n-1}}|f|^2\overline{m}^{\,\,-\varepsilon}dx'
+\varepsilon\int_{{\mathbb R}^{n-1}}\overline{m}^{\,2-\varepsilon}\,dx'.
\end{align}
By classical arguments (cf., e.g., \cite{FSt}) we have
$$
\int_{{\mathbb R}^{n-1}}\overline{m}^{\,2-\varepsilon}\,dx'\lesssim \int_{{\mathbb R}^{n-1}}{m}^{2-\varepsilon}\,dx',
$$
and for some sufficiently small $\varepsilon>0$ this yields
\begin{equation}\label{TX.5}
\int_{{\mathbb R}^{n-1}}m^{\,2-\varepsilon}(x')\,dx'\le C\int_{{\mathbb R}^{n-1}}|f|^2\overline{m}^{\,\,-\varepsilon}dx'.
\end{equation}
Since for almost every $x'$ we have $|f(x')|\le \overline{m}(x')$, this ultimately yields the desired estimate
\begin{equation}\label{TX.6}
\int_{{\mathbb R}^{n-1}}m^{2-\varepsilon}(x')\,dx'\le C\int_{{\mathbb R}^{n-1}}|f(x')|^{2-\varepsilon}dx',
\end{equation}
proving solvability for $p<2$ close to $2$.

\section{Proof of Corollary \ref{MM:C1}}

In this section we show how Theorem \ref{S3:T1}  can be applied to the Lam\'e system \eqref{Lame}.
Recall that in order to apply this result we have to modify the coefficients of our system so that $A_{0j}^{\alpha\beta}=\delta_{\alpha\beta}\delta_{0j}$ and verify that the system is strongly elliptic. Also, thanks to the observation we have made earlier in \eqref{eqLM} there is a degree of flexibility in the choice of an arbitrary function $r\in L^\infty(\Omega)$. This will be useful when verifying the ellipticity condition.

Recall subsection \ref{SS:Nor} of this paper where we have explained the process of rewriting our system in a more convenient form where $A_{0j}^{\alpha\beta}=\delta_{\alpha\beta}\delta_{0j}$ holds. Following the notation we have introduced in 
subsection \ref{SS:Nor} the minor matrices $A_{ij}$ of size $n\times n$ for the coefficients given by \eqref{eqLM} are:

\begin{equation}
A_{ii} = 
\begin{blockarray}{*{5}{c} l}
&  & {i} &  &  \\
\begin{block}{[*{5}{c}] l}
    \mu & 0 & \dots & \dots & 0 & \\
    \vdots & \ddots & & & \vdots &\\
    \vdots &\dots & \lambda+2\mu &\dots&\vdots & i\\
    \vdots &&&\ddots& 0&\\
    0 &   \dots     & \dots& 0&\mu&\\
    \end{block}
 \end{blockarray}\qquad
 A_{ij}=\begin{blockarray}{*{5}{c} l}
&   {i}& j&  &  \\
\begin{block}{[*{5}{c}] l}
    0 & 0 & \dots & \dots & 0 & \\
    \vdots & \ddots &\lambda+\mu-\gamma & & \vdots & i\\
    \vdots &\gamma & 0 &\dots&\vdots &j\\
    \vdots &&&\ddots& 0&\\
    0 &   \dots     & \dots& 0& 0&\\
    \end{block}
 \end{blockarray}.\nonumber
\end{equation}
Here, $i\ne j$ and we have dropped dependence of the matrices on $x\in\Omega$ and set $\gamma=\mu-r$. In particular, $A_{00}$ is a diagonal matrix that is invertible if $\mu,\lambda+2\mu\ne 0$. We shall assume that $\mu,\lambda+2\mu> 0$ as we look for positive definitness. Assuming this we get for $\overline{A}$ defined by \eqref{hatA} and \eqref{yr4DDF}:

\begin{eqnarray}\label{eq-Abar}
&&\overline{A_{00}} =I_{n\times n},\qquad \overline{A_{0j}} =0_{n\times n},\qquad 
\overline{A_{i0}}=\begin{blockarray}{*{5}{c} l}
0& &  {i}&   &  \\
\begin{block}{[*{5}{c}] l}
    0 & \dots & \frac{\lambda+\mu}{\lambda+2\mu} & \dots & 0 & 0\\
    \vdots & \ddots && & \vdots & \\
    \frac{\lambda+\mu}\mu &0 & 0 &\dots&\vdots &i\\
    \vdots &&&\ddots& 0&\\
    0 &   \dots     & \dots& 0& 0&\\
    \end{block}
 \end{blockarray},\\
&&\overline{A_{ii}} = 
\begin{blockarray}{*{5}{c} l}
&  & {i} &  &  \\
\begin{block}{[*{5}{c}] l}
    \frac\mu{\lambda+2\mu} & 0 & \dots & \dots & 0 & \\
    \vdots & 1 & & & \vdots &\\
    \vdots &\dots & \frac{\lambda+2\mu}{\mu} &\dots&\vdots & i\\
    \vdots &&&\ddots& 0&\\
    0 &   \dots     & \dots& 0&1&\\
    \end{block}
 \end{blockarray}\qquad
\overline{ A_{ij}}=\begin{blockarray}{*{5}{c} l}
&   {i}& j&  &  \\
\begin{block}{[*{5}{c}] l}
    0 & 0 & \dots & \dots & 0 & \\
    \vdots & \ddots &\frac{\lambda+\mu-\gamma}{\mu} & & \vdots & i\\
    \vdots &\frac{\gamma}\mu & 0 &\dots& \vdots &j\\
    \vdots &&&\ddots& 0&\\
    0 &   \dots     & \dots& 0& 0&\\
    \end{block}
 \end{blockarray}.\nonumber
\end{eqnarray}
Here $i,j>0$ and $i\ne j$. Hence $\overline{A_{0j}^{\alpha\beta}}=\delta_{\alpha\beta}\delta_{0j}$ as desired. It remains to pick optimal $\gamma$ in order for the tensor $\overline{A}$ to be strongly elliptic.
For $\overline{A_{ij}^{\alpha\beta}}\eta_i^\alpha\eta_j^\beta$ we therefore have:

\begin{eqnarray}\nonumber
 \overline{A_{ij}^{\alpha\beta}}\eta_i^\alpha\eta_j^\beta&=& |\eta_0^0|^2+\textstyle\frac{\lambda+2\mu}{\mu}\sum_{i>0}|\eta_i^i|^2+\frac{\lambda+\mu}{\mu}\sum_{i>0}\eta_0^0\eta_i^i+2\frac{\lambda+\mu-\gamma}{\mu} \sum_{0<i<j}\eta_i^i\eta_j^j\\\label{eq-45}
 &+&\textstyle\sum_{i>0}\left(|\eta_i^0|^2+\frac\mu{\lambda+2\mu}|\eta_0^i|^2+\frac{\lambda+\mu}{\lambda+2\mu}\eta^0_i\eta^i_0 \right)\\\nonumber
 &+&\textstyle\sum_{0<i<j}\left(|\eta_i^j|^2+|\eta_j^i|^2+\frac{2\gamma}{\mu}\eta_i^j\eta_j^i
 \right).
\end{eqnarray}
In order for \eqref{eq-45} to be positive definite, all three lines must be positive definite. For the last line of \eqref{eq-45} that implies that $|\gamma|<\mu$. The second line of \eqref{eq-45} implies that
\begin{equation}\label{eqss}
\frac\mu{\lambda+2\mu}>\left(\frac{\lambda+\mu}{2\lambda+4\mu}\right)^2\quad\Leftrightarrow\quad
7\mu^2+2\mu\lambda-\lambda^2>0,
\end{equation}
or $1-\sqrt{8}<\frac\lambda\mu<1+\sqrt{8}$. Recall that we need $\lambda+2\mu>0$, but our new condition is stronger. Thus we shall assume that $(1-\sqrt{8})\mu<\lambda<(1+\sqrt{8})\mu$. Finally, in order for the first line of \eqref{eq-45} to be positive definite the matrix 
$$
M=\begin{bmatrix}
    1 & \frac{\lambda+\mu}{2\mu} & \dots & \dots & \frac{\lambda+\mu}{2\mu} \\
    \frac{\lambda+\mu}{2\mu} & \frac{\lambda+2\mu}{\mu} &\frac{\lambda+\mu-\gamma}{\mu} &\dots & \frac{\lambda+\mu-\gamma}{\mu} \\
    \vdots &\frac{\lambda+\mu-\gamma}{\mu}& \ddots &&\vdots \\
    \vdots &\vdots&&\ddots&\frac{\lambda+\mu-\gamma}{\mu}\\
    \frac{\lambda+\mu}{2\mu}&   \frac{\lambda+\mu-\gamma}{\mu}     & \dots& \frac{\lambda+\mu-\gamma}{\mu}& \frac{\lambda+2\mu}{\mu}\\
    \end{bmatrix}$$
must be positive definite. The lower-right $(n-1)\times(n-1)$ block has a special structure. For $\xi'=(0,\xi_1,\xi_2,\dots,\xi_{n-1})^T$ we see that 
$$\xi'^TM\xi'=\textstyle\frac{\mu+\gamma}{\mu}(|\xi_1|^2+|\xi_2|^2+\dots+|\xi_{n-1}|^2)+\textstyle\frac{\lambda+\mu-\gamma}{\mu}(\xi_1+\xi_2+\dots+\xi_{n-1})^2.$$
Hence if $\lambda+\mu-\gamma\ge 0$ this is positive definite (recall that $|\gamma|<\mu$). Using this we can write for $\xi=(\xi_0,\xi_1,\xi_2,\dots,\xi_{n-1})^T$
\begin{eqnarray}
\xi^TM\xi&=&\textstyle\frac{\mu+\gamma}{\mu}(|\xi_1|^2+|\xi_2|^2+\dots+|\xi_{n-1}|^2)+\\
&&\nonumber\textstyle\frac{\lambda+\mu-\gamma}{\mu}T^2+\frac{\lambda+\mu}{\mu}\xi_0T+|\xi_0|^2.
\end{eqnarray}
Here $T=\sum_{i>0}\xi_i$. The expression in the last line is positive definite if
\begin{equation}\label{eq-46}
\frac{\lambda+\mu-\gamma}{\mu}>\left(\frac{\lambda+\mu}{2\mu}\right)^2.
\end{equation}
It follows that we want to chose $\gamma$ as small as possible it order to make the lefthand side as large as possible. Given the restriction $|\gamma|<\mu$ the optimal choice is $\gamma=-\mu+\varepsilon$ for some small $\varepsilon>0$. Using this choice of $\gamma$ \eqref{eq-46} will hold if
\begin{equation}\label{eq-47}
\frac{\lambda+2\mu}{\mu}>\left(\frac{\lambda+\mu}{2\mu}\right)^2\quad\Leftrightarrow\quad
7\mu^2+2\mu\lambda-\lambda^2>0.
\end{equation}
Compare this to \eqref{eqss}! Thus as before we must have $(1-\sqrt{8})\mu<\lambda<(1+\sqrt{8})\mu$.

We summarise our calculations in the following lemma.

\begin{lemma}\label{Lamma-ell} The Lam\'e system 
\begin{equation}\label{Lamex}
\mathcal Lu=\nabla\cdot\left(\lambda(x)(\nabla\cdot u)I+\mu(x)(\nabla u+(\nabla u)^T) \right)=0.
\end{equation}
for an unknown function $u:\Omega\to{\mathbb R}^n$ can be written in an equivalent form as
\begin{equation}\label{Lamexx}{\mathcal L}'u=0,\quad\mbox{where}\quad
\mathcal{L'}u=\left[ \partial_{i} \left(\overline{A}_{ij}^{\alpha \beta}(x) \partial_{j} u_{\beta}\right)
+\overline{B}_{i}^{\alpha \beta}(x) \partial_{i}u_{\beta}\right]_{\alpha},
\end{equation}
with coefficients $\overline{A}$ as in \eqref{eq-Abar} and coefficients $\overline{B}$ satisfying a simple estimate $|B|\lesssim |\nabla\lambda|+|\nabla\mu|$. The operator $\mathcal L'$ is strongly elliptic
if 
$$\mbox{\rm ess }\inf_{x\in\Omega}\{(\sqrt{8}-1)\mu(x)+\lambda(x),(\sqrt{8}+1)\mu(x)-\lambda(x)\}>0.$$
\end{lemma}
Observe that this condition also implies that $\mbox{\rm ess }\inf_{x\in\Omega} \mu>0$ and hence we do not have to state that explicitely. By combining Theorem \ref{S3:T1} and Lemma \ref{Lamma-ell} we see that Corollary \ref{MM:C1} follows.
\vglue3mm

\begin{bibdiv}
\begin{biblist}

\bib{AAAHK}{article}{
   author={Alfonseca, M.},
   author={Auscher, P.},
   author={Axelsson, A.},
   author={Hofmann, S.},
   author={Kim, S.},
   title={Analyticity of layer potentials and $L^2$ solvability of boundary value problems for divergence form elliptic equations with complex $L^\infty$ coefficients.},
   journal={Adv. Math},
   volume={226},
   date={2011},
   number={5},
   pages={4533--4606},
}

\bib{AA1}{article}{
   author={Auscher, P.},
   author={Axelsson, A.},
   title={Weighted maximal regularity estimates and solvability of
   non-smooth elliptic systems I},
   journal={Invent. Math.},
   volume={184},
   date={2011},
   number={1},
   pages={47--115},
   %issn={0020-9910},
   %review={\MR{2782252 (2012c:35111)}},
   %doi={10.1007/s00222-010-0285-4},
}

\bib{AAH}{article}{
   author={Auscher, P.},
   author={Axelsson, A.},
   author={Hofmann, S.},
   title={Functional calculus of Dirac operators and complex perturbations of Neumann and Dirichlet problems},
   journal={J. Func. Anal},
   volume={255},
   date={2008},
   number={2},
   pages={374--448},
}

 \bib{AAM}{article}{
   author={Auscher, P.},
   author={Axelsson, A.},
   author={McIntosh, A.},
   title={Solvability of elliptic systems with square integrable boundary
   data},
   journal={Ark. Mat.},
   volume={48},
   date={2010},
   number={2},
   pages={253--287},
   %issn={0004-2080},
   %review={\MR{2672609 (2011h:35070)}},
   %doi={10.1007/s11512-009-0108-2},
}

\bib{AHLMT}{article}{
   author={Auscher, P.},
   author={Hofmann, S.},
   author={Lacey, M.},
   author={McIntosh, A.},
   author={Tchamitchian, P.},
   title={The solution of the Kato square root problem for second order elliptic operators on ${\mathbb R}^n$},
   journal={Ann. Mat.},
   volume={156},
   date={2001},
   number={2},
   pages={633--654},
   %issn={0004-2080},
   %review={\MR{2672609 (2011h:35070)}},
   %doi={10.1007/s11512-009-0108-2},
}

\bib{AM}{article}{
   author={Auscher, P.},
   author={Mourgoglou, M.},
   title={Boundary layers, Rellich estimates and extrapolation of solvability for elliptic systems},
   journal={Proc. Lond. Math. Soc. (3)},
   number={2},
   volume={109},
   year={2014},
   pages={446--482},
}   
	
\bib{AR2}{article}{
   author={Auscher, P.},
   author={Ros{\'e}n, A.},
   title={Weighted maximal regularity estimates and solvability of nonsmooth
   elliptic systems, II},
   journal={Anal. PDE},
   volume={5},
   date={2012},
   number={5},
   pages={983--1061},
   %issn={2157-5045},
   %review={\MR{3022848}},
}

 \bib{BM}{article}{
   author={Brown, R.},
   author={Mitrea, I.},
   title={The mixed problem for the Lam\'e system in a class of Lipschitz domains},
   journal={J. Diff. Eq.},
   volume={246},
   year={2009},
   number={7}, 
   pages={2577--2589},
}   

    \bib{DK}{article}{
   author={Dahlberg, B.},
   author={Kenig, C.},
   title={Hardy spaces and the Neumann problem in $L^p$ for Laplace's
   equation in Lipschitz domains},
   journal={Ann. of Math. (2)},
   volume={125},
   date={1987},
   number={3},
   pages={437--465},
   %issn={0003-486X},
   %review={\MR{890159 (88d:35044)}},
   %doi={10.2307/1971407},
}

\bib{DKV2}{article}{
   author={Dahlberg, B.},
   author={Kenig, C.},
   author={Verchota, G.},
   title={The Dirichlet problem for the biharmonic equation
in Lipschitz domain},
   journal={Ann. Inst. Fourier}, 
   volume={36},
   number={3},
   date={1986},
   pages={109--136},
}

    \bib{DKV}{article}{
   author={Dahlberg, B.},
   author={Kenig, C.},
   author={Verchota, G.},
   title={Boundary value problems for the systems ofelastostatics
in Lipschitz domains},
   journal={Duke Math. J.}, 
   volume={57},
   date={1988},
   pages={795--818},
   %issn={0003-486X},
   %review={\MR{890159 (88d:35044)}},
   %doi={10.2307/1971407},
}

\bib{D1}{article}{
 author={Dindo\v{s}, M.},
 title={The $L^p$ Dirichlet and Regularity problems for second order Elliptic Systems with application to the Lam\'e system},
 journal={arXiv 2006.13015},
 }
 
  \bib{DH}{article}{
   author={Dindo{\v{s}}, M.},
   author={Hwang, S.},
   title={The Dirichlet boundary problem for second order parabolic operators satisfying Carleson condition},
   journal={Rev. Math. Iber.},
      volume={34},
   number={2},
   date={2018},
   pages={767--810},
   %issn={0360-5302},
   %review={\MR{1886960 (2003a:35070)}},
   %doi={10.1081/PDE-120002787},
}
 
 \bib{DM}{article}{
 author={Dindo\v{s}, M.},
 author={Mitrea, M.},
 title={The stationary Navier-Stokes system in nonsmooth manifolds: the Poisson problem in Lipschitz and $C^1$ domains},
 journal={Arch. Ration. Mech. Anal.},
 volume={174},
 year={2004},
 number={1},
 pages={1--47},
}		
		
   \bib{DPP}{article}{
   author={Dindo\v{s}, M.},
   author={Petermichl, S.},
   author={Pipher, J.},
   title={The $L^p$ Dirichlet problem for second order elliptic operators
   and a $p$-adapted square function},
   journal={J. Funct. Anal.},
   volume={249},
   date={2007},
   number={2},
   pages={372--392},
   %issn={0022-1236},
   %review={\MR{2345337 (2008f:35108)}},
   %doi={10.1016/j.jfa.2006.11.012},
    }
    
 \bib{DP}{article}{
   author={Dindo\v{s}, M.},
   author={Pipher, J.},
   title={Regularity theory for solutions to second order elliptic operators with complex coefficients and the $L^p$ Dirichlet problem},
   journal={Adv. Math.},
   volume={341},
   date={2019},
   pages={255--298},
    }
    
   \bib{DPR}{article}{
   author={Dindo\v{s}, M.},
   author={Pipher, J.},
   author={Rule, D.},
   title={The boundary value problems for second order elliptic operators satisfying a Carleson condition},
   journal={Com. Pure Appl. Math.},
   volume={70},
   number={2},
   date={2017},
   pages={1316--1365},
    }

   \bib{F}{article}{
   author={Fabes, E.},
   title={Layer potential methods for boundary value problems on Lipschitz domains},
   journal={In: Lecture Notes in Mathematics}, 
   volume={1344},
   year={1980},
   pages={55--80},
    }
    
\bib{FKV}{article}{
   author={Fabes, E.},
   author={Kenig, C.},
   author={Verchota, G.},
   title={Boundary value problems for the Stokes system on Lipschitz domains},
   journal={Duke Math. J.},
   volume={57}, 
   pages={769--793},
   year={1988},
  }

   \bib{FMZ}{article}{
   author={Feneuil, J.},
   author={Mayboroda, S.},
   author={Zhao, Z.},
   title={Dirichlet problem in domains with lower dimensional boundaries},
   journal={Preprint, arXiv:1810.06805},
  }
  
  \bib{G}{article}{
   author={Gao, W.},
   title={Boundary value problems on Lipschitz domains for general elliptic systems},
   journal={J. Funct. Anal.},
   volume={95},
   pages={377--399},
   year={1991},
    }

\bib{HKMPreg}{article}{
   author={Hofmann, S.},
   author={Kenig, C.},
   author={Mayboroda, S.},
   author={Pipher, J.},
   title={The regularity problem for second order elliptic operators with complex-valued bounded measurable coefficients},
   journal={Math. Ann.},
   volume={361},
   date={2015},
   issue={3--4},
   pages={863--907},
}

 \bib{HM}{article}{
 author={Hofmann, S.},
   author={Martell, J.},
   title={$L^p$ bounds for Riesz transforms and square roots associated to second order elliptic operators},
   journal={Pub. Mat.},
   volume={47},
   date={2003},
   pages={497--515},
}	
 
 \bib{FSt}{article}{
   author={Fefferman, C.},
   author={Stein, E.},
   title={$H^p$ spaces of several variables},
   journal={Acta Mat.},
   volume={129},
   date={1972},
   pages={137--193},
   %issn={0214-1493},
   %review={\MR{1829584 (2002e:35017)}},
  % d
 }

 \bib{K}{article}{
   author={Kenig, C.},
   title={Elliptic boundary value problems on Lipschitz domains},
   journal={In: Beijing Lectures in
Harmonic Analysis, Ann. of Math. Stud.},
   volume={112},
   pages={131--183},
   year={1986},
}

   \bib{KKPT}{article}{
   author={Kenig, C..},
   author={Koch, H.},
   author={Pipher, J.},
   author={Toro, T.},
   title={A new approach to absolute continuity of elliptic measure, with
   applications to non-symmetric equations},
   journal={Adv. Math.},
   volume={153},
   date={2000},
   number={2},
   pages={231--298},
   %issn={0001-8708},
   %review={\MR{1770930 (2002f:35071)}},
   %doi={10.1006/aima.1999.1899},
    }
    
   \bib{KP}{article}{
   author={Kenig, C.},
   author={Pipher, J.},
   title={The Dirichlet problem for elliptic equations with drift terms},
   journal={Publ. Mat.},
   volume={45},
   date={2001},
   number={1},
   pages={199--217},
   %issn={0214-1493},
   %review={\MR{1829584 (2002e:35017)}},
  % doi={10.5565/PUBLMAT_45101_09},
}

   \bib{KP01}{article}{
   author={Kenig, C.},
   author={Pipher, J.},
   title={The Dirichlet problem for elliptic equations with drift terms},
   journal={Publ. Math.},
   volume={45},
   date={2001},
   number={1},
   pages={199--217},
   %issn={0001-8708},
   %review={\MR1829584 (2002e:35017)}},
   %doi={10.1006/aima.1999.1899},
    }

   \bib{MMMM}{article}{
   author={Martell, J.},
   author={Mitrea, D.},
   author={Mitrea, I.},
   author={Mitrea, M.},
   title={The Dirichlet problem for elliptic systems with data in K\"othe function spaces},
   journal={Rev. Mat. Iberoam.},
   volume={32},
   year={2016},
   number={3},
   pages={913--970},
}

   \bib{MR}{article}{
author = {Mott, P.},
author={Roland, C.},
year = {2012},
month = {04},
title = {Limits to Poisson's ratio in isotropic materials - General result for arbitrary deformation},
volume = {87},
journal = {Physica Scripta},
}
    
   \bib{S1}{article}{
   author={Shen, Z.},
   title={Necessary and sufficient conditions for the solvability of the $L^p$ Dirichlet problem on Lipschitz domains},
   journal={Math. Ann.},
   year={2006},
   volume={336},
   pages={697--725},
}

   \bib{S2}{article}{
   author={Shen, Z.},
   title={The $L^p$ Dirichlet problem for elliptic systems on Lipschitz domains},
   journal={Math. Res. Lett.},
   volume={13},
   year={2006},
   noumber={1}, 
   pages={143--159},
}

   \bib{S3}{article}{
   author={Shen, Z.},
   title={Extrapolation for the $L^p$ Dirichlet Problem in Lipschitz Domains},
   journal={Acta Math. Sin, Eng. Ser.},
   volume={35},
   year={2019},
   noumber={6}, 
   pages={1074--1084},
}

   \bib{UW}{article}{
   author={Uhlmann, G.},
   author={Wang, J.},
   title={Complex spherical waves for the elasticity system and probing of inclusions},
   journal={SIAM J. Math. Anal.},
   volume={38},
   year={2007}, 
   issue={6},
   pages={1967--1980},
}   

   \bib{Y}{article}{
   author={Yan, B.},
   title={Existence and Regularity Theory for Nonlinear Elliptic Systems and Multiple Integrals in the Calculus of Variations},
   journal={Lecture notes, MSU, https://users.math.msu.edu/users/yan/full-notes.pdf}, 
}   

   \end{biblist}
\end{bibdiv}

\end{document}